\newcommand{\R}{\ensuremath{\mathbb{R}}}
\newcommand{\Hyp}{\ensuremath{\mathbb{H}}}
\newcommand{\M}{\ensuremath{{\cal M}}}
\newcommand{\C}{\ensuremath{{\cal C}}}
\newcommand{\Pk}{\ensuremath{{\cal P}^*_k}}
\newcommand{\Mstar}{\ensuremath{{\cal M}^*{(x_0, \ldots x_k)}}}
\DeclareMathOperator{\Aff}{Aff}
\DeclareMathOperator{\Span}{Span}
\DeclareMathOperator{\EBS}{EBS}
\DeclareMathOperator{\FBS}{FBS}
\DeclareMathOperator{\KBS}{KBS}
\DeclareMathOperator{\arccosh}{arccosh}
\newcommand{\dist}{\ensuremath{\:\mbox{\rm dist}}}
\newcommand{\scal}[2]{\ensuremath{ \left<
    \:#1\:\left|\:#2\right.\right> }}
\newcommand{\inv}{^{\text{\tiny (-1)}}}
\newcommand{\trp}{^{\text{\tiny T}}} 
\newcommand{\Id}{\ensuremath{\:\mathrm{Id}}}
\newcommand{\one}{\ensuremath{\mathds{1}}}
\newcommand{\tr}{\mbox{\rm Tr}}
\newcommand{\lcp}[1]{\ensuremath{\overrightarrow{#1}}}
\newcommand\ubar[1]{\underaccent{\bar}{#1}}
\newcommand{\nlambda}{\ubar{\lambda}}
\newtheorem{theorem}{Theorem}
\newtheorem{definition}{Definition}
\newtheorem{proposition}{Proposition}
\begin{document}

\begin{frontmatter}

\pdfbookmark[0]{Barycentric Subspace Analysis on Manifolds}{title}
\title{Barycentric Subspace Analysis on Manifolds}
\runtitle{Barycentric Subspace Analysis on Manifolds}


\author{\fnms{Xavier} \snm{Pennec}\corref{}\ead[label=e1]{xavier.pennec@inria.fr}}

\affiliation{Universit\'e C\^ote d'Azur and Inria, France}

\address{Asclepios team, Inria Sophia-Antipolis M\'editerrann\'ee\\ 2004 Route des Lucioles, BP93
\\ F-06902 Sophia-Antipolis Cedex, France\\ \printead{e1}}

\runauthor{X. Pennec}

\begin{abstract}
This paper investigates the generalization of Principal Component Analysis (PCA) to Riemannian manifolds. We first propose a new and  general type of family of subspaces in manifolds that we call barycentric subspaces. They are implicitly defined as the locus of points which are weighted means of $k+1$ reference points.  As this definition relies on points and not on tangent vectors,  it can also be extended to geodesic spaces which are not Riemannian. For instance, in stratified spaces, it naturally allows principal subspaces that span several strata, which is impossible in previous generalizations of PCA. We show that barycentric subspaces locally define a submanifold of dimension $k$ which generalizes geodesic subspaces.

Second, we rephrase PCA in Euclidean spaces as an optimization on flags of linear subspaces (a hierarchy of properly embedded linear subspaces of increasing dimension). We show that the Euclidean PCA minimizes the Accumulated Unexplained Variances by all the subspaces of the flag (AUV).  Barycentric subspaces are naturally nested, allowing the  construction of hierarchically nested subspaces. Optimizing the AUV criterion to optimally approximate data points with flags of affine spans in Riemannian manifolds  lead to a particularly appealing generalization of PCA on manifolds called Barycentric Subspaces Analysis (BSA). 
\end{abstract}

\begin{keyword}[class=MSC]
\kwd[Primary ]{60D05} 
\kwd{62H25} 
\kwd[; secondary ]{58C06} 
\kwd{62H11} 
\end{keyword}

\begin{keyword}
\kwd{Manifold} \kwd{Fr\'echet mean} \kwd{Barycenter} 
\kwd{Flag of subspaces} \kwd{PCA}
\end{keyword}

\end{frontmatter}

\section{Introduction}

In a Euclidean space, the principal $k$-dimensional affine subspace of the Principal Component Analysis (PCA) procedure is equivalently defined by minimizing the variance of the residuals (the projection of the data point to the subspace) or by maximizing the explained variance within that affine subspace. This double interpretation is available through Pythagoras' theorem, which does not hold in more general manifolds. A second important observation is that principal components of different orders are nested, enabling the forward or backward construction of nested principal components. 

Generalizing PCA to manifolds first requires the definition of the equivalent of affine subspaces in manifolds. For the zero-dimensional subspace, an intrinsic generalization of the mean on manifolds naturally comes into mind: the Fr\'echet mean is the set of global minima of the variance, as defined by \cite{frechet48} in general metric spaces. For simply connected Riemannian manifolds of non-positive curvature, the minimum is unique and is  called the Riemannian center of mass. This fact was already known by Cartan in the 1920's, but was not used for statistical purposes. \cite{karcher77,buser_gromovs_1981} first established conditions on the support of the distribution to ensure the uniqueness of a local minimum in general Riemannian manifolds. This is now generally called Karcher mean, although there is a dispute on the naming \citep{karcher_riemannian_2014}. From a statistical point of view, \cite{Bhattacharya:2003,Bhattacharya:2005} have studied in depth the asymptotic properties of the empirical Fr\'echet / Karcher means.

The one-dimensional component can naturally be a geodesic passing through the mean point. Higher-order components are more difficult to define. The simplest generalization is tangent PCA (tPCA), which amounts unfolding the whole distribution in the tangent space at the mean, and computing the principal components of the covariance matrix in the tangent space. The method  is thus based on the maximization of the explained variance, which is consistent with the entropy maximization definition of a Gaussian on a manifold  proposed by \cite{pennec:inria-00614994}. tPCA is actually implicitly used in most statistical works on shape spaces and Riemannian manifolds because of its simplicity and efficiency. However, if tPCA is good for analyzing data which are sufficiently centered around a central value (unimodal or Gaussian-like data), it is often not sufficient for distributions which are multimodal or supported on large compact subspaces (e.g. circles or spheres). 

Instead of an analysis of the covariance matrix, \cite{fletcher_principal_2004} proposed the minimization of squared distances to subspaces which are totally geodesic at a point, a procedure coined Principal Geodesic Analysis (PGA). These  Geodesic Subspaces (GS) are spanned by the geodesics going through a point with tangent vector restricted to a linear subspace of the tangent space. However, the least-squares procedure is computationally expensive, so that the authors approximated it in practice with tPCA, which led to confusions between tPCA and PGA. A real implementation of the  original PGA procedure was only recently provided  by \cite{sommer_optimization_2013}. PGA is allowing to build a flag (sequences of embedded subspaces) of principal geodesic subspaces consistent with a forward component analysis approach. Components are built iteratively from the mean point by selecting the tangent direction that optimally reduces the square distance of data points to the geodesic subspace. In this procedure, the mean  always belongs to geodesic subspaces even when it is outside of the distribution support.

To alleviate this problem, \cite{huckemann_principal_2006}, and later \cite{huckemann_intrinsic_2010}, proposed to start at the first order component directly with the geodesic best fitting the data, which is not necessarily going through the mean. The second principal geodesic is chosen orthogonally to the first one, and  higher order components are added orthogonally at the crossing point of the first two components. The method was named  Geodesic PCA (GPCA). Further relaxing the assumption that second and higher order components should cross at a single point, \cite{sommer_horizontal_2013} proposed a parallel transport of the second direction along the first principal geodesic to define the second coordinates, and iteratively define higher order coordinates through horizontal development along the previous modes.   

These are all intrinsically forward methods that build successively larger approximation spaces for the data. A notable exception is the concept of Principal Nested Spheres (PNS), proposed by \cite{jung_analysis_2012} in the context of planar landmarks shape spaces. A backward analysis approach determines a decreasing family of nested subspheres by slicing a higher dimensional sphere with affine hyperplanes. In this process, the nested subspheres are not of radius one, unless the hyperplanes passe through the origin. \cite{damon_backwards_2013} have recently generalized this approach to manifolds with the help of a ``nested sequence of relations''. However, up to now, such a sequence was only known for spheres or Euclidean spaces. 

We first propose in this paper new types of family of subspaces in manifolds: barycentric subspaces generalize geodesic subspaces  and  can naturally be nested,  allowing the construction of inductive forward or backward nested subspaces. We then rephrase PCA in Euclidean spaces as an optimization on flags of linear subspaces (a hierarchy of properly embedded linear subspaces of increasing dimension). To that end, we propose an extension of the unexplained variance criterion that generalizes nicely to flags of barycentric subspaces in Riemannian manifolds. This leads to a particularly appealing generalization of PCA on manifolds: Barycentric Subspaces Analysis (BSA).

\subsection*{Paper Organization}
We recall in Section \ref{Sec:Geom} the notions and notations needed to define statistics on Riemannian manifolds, and we introduce the two running example manifolds of this paper: $n$-dimensional spheres and hyperbolic spaces.  Exponential Barycentric Subspaces (EBS) are then defined in Section \ref{Sec:Bary} as the locus of weighted exponential barycenters of $k+1$ affinely independent reference points.  The  closure of the EBS in the original manifold is called affine span (this differs from the preliminary definition of \cite{pennec:hal-01164463}).  Equations of the EBS and affine span are exemplified on our running examples: the affine span  of $k+1$ affinely independent reference points is the great subsphere (resp.  sub-hyperbola) that contains the reference points. In fact, other tuple of points of that subspace generates the same affine span, which is also a geodesic subspace. This coincidence is due to the very high symmetry of the constant curvature spaces. 

Section \ref{Sec:KBS} defines the  Karcher (resp. Fr\'echet)  barycentric subspaces (KBS, resp. FBS)  as the  local (resp. global) minima of the weighted squared distance to the reference points. As the definitions relies on distances between points and not on tangent vectors, they are also valid in more general non-Riemannian geodesic spaces. For instance, in stratified spaces,  barycentric subspaces may naturally span  several strata. For Riemannian manifolds, we show that our three definitions are subsets of each other (except possibly at the cut locus of the reference points): the largest one, the EBS, is composed of the critical points of the weighted variance. It forms a cell complex according to the index of the critical points. Cells of positive index gather local minima to form the KBS. 
We explicitly compute the Hessian on our running spherical and hyperbolic examples.  Numerical tests show that the index can be arbitrary, thus subdividing the EBS into several regions for both positively and negatively curved spaces. Thus, the KBS consistently covers only a small portion of the affine span in general and is a less interesting definition for subspace analysis purposes.

For affinely independent points, we show in Section \ref{Sec:Prop} that the regular part of a barycentric subspace is a stratified space which is locally a submanifold of dimension $k$. At the limit, points may coalesce along certain directions, defining non local jets\footnote{$p$-jets are equivalent classes of functions up to order $p$. Thus, a $p$-jet specifies the Taylor expansion of a smooth function up to order $p$. Non-local jets, or multijets, generalize subspaces of the tangent spaces to higher differential orders  with multiple base points.} instead of a regular $k+1$-tuple. Restricted geodesic subspaces, which are defined by $k$ vectors tangent at a point, correspond  to the limit of the affine span when the $k$-tuple converges towards that jet. 

Finally, we discuss in Section \ref{Sec:BSA} the use of these barycentric subspaces to generalize PCA on manifolds. Barycentric subspaces can be naturally nested by defining an ordering of the reference points. Like for PGA, this enables the construction of a forward nested sequence of subspaces which contains the Fr\'echet mean. In addition, BSA also provides  backward nested sequences which may not contain the mean. However, the criterion on which these constructions are based can be optimized for each subspace independently but not consistently for the whole sequence of subspaces. In order to obtain a global criterion, we rephrase PCA in Euclidean spaces as an optimization on flags of linear subspaces (a hierarchies of properly embedded linear subspaces of increasing dimension). To that end, we propose an extension of the unexplained variance criterion (the Accumulated Unexplained Variance (AUV) criterion) that generalizes nicely to flags of affine spans in Riemannian manifolds. This results into a particularly appealing generalization of PCA on manifolds, that we call Barycentric Subspaces Analysis (BSA).

\section{Riemannian geometry}\label{Sec:Geom}

In Statistics, directional data occupy a place of choice \citep{dryden2005,huckemann_principal_2006}. Hyperbolic spaces are also the simplest models of negatively curved spaces which model the space of isotropic Gaussian parameters with the Fisher-Rao metric in information geometry \citep{costa_fisher_2015}. As non-flat constant curvature spaces, both spherical and hyperbolic spaces are now considered in manifold learning for embedding data \citep{wilson_spherical_2014}. Thus, they are ideal examples to illustrate the theory throughout this paper.

\subsection{Tools for computing on Riemannian manifolds}
We consider a differential manifold  $\M$ endowed with a smooth scalar product $\scal{.}{.}_{x}$ called the Riemannian metric on each tangent space $T_{x}\M$ at point $x$ of $\M$. In a chart, the metric is specified by the dot product of the tangent vector to the coordinate curves: $g_{ij}(x) = \scal{\partial_i}{\partial_j}_x$. The Riemannian distance between two points is the infimum of the length of the curves joining these points. Geodesics, which are critical points of the energy functional, are  parametrized by arc-length in addition to optimizing the length. We assume here that the manifold is geodesically complete, i.e. that the definition domain of all geodesics can be extended to $\R$. This means that the manifold has no boundary nor any singular point that we can reach in a finite time. As an important consequence, the Hopf-Rinow-De~Rham theorem states that there always exists at least one minimizing geodesic between any two points of the manifold (i.e. whose length is the distance between the two points).

\paragraph{Normal coordinate system}
From the theory of second order differential equations, we know that there exists one and only one geodesic $\gamma_{(x,v)}(t)$ starting from the point $x$ with the tangent vector $v \in T_{x}\M$. The exponential map at point $x$  maps each tangent vector $v \in T_{x}\M$ to the point of the manifold that is reached after a unit time by the geodesic: $ \exp_{x}(v) = \gamma_{(x,v)}(1)$. The exponential map is locally one-to-one around $0$:  we denote by $\lcp{xy}=\log_{x}(y)$ its inverse.  The injectivity domain is the maximal domain $D(x) \subset T_{x}\M$ containing $0$ where the exponential map is a diffeomorphism. This is a connected star-shape domain limited by the tangential cut locus $\partial D(x) = C(x) \subset T_{x}\M$ (the set of vectors $t v$ where the geodesic $\gamma_{(x, v)}(t)$ ceases to be length minimizing). The cut locus $\C(x) = \exp_{x}(C(x)) \subset \M$ is the closure of the set of points where several minimizing geodesics starting from $x$ meet. The image of the domain $D(x)$ by the exponential map covers all the manifold except the cut locus, which has null measure. Provided with an orthonormal basis, exp and log maps realize a normal coordinate system at each point $x$. Such an atlas is the basis of programming on Riemannian manifolds as exemplified in \cite{pennec:inria-00614990}.

\paragraph{Hessian of the squared Riemannian distance}
On $\M \setminus C(y)$, the Riemannian gradient $\nabla^a = g^{ab} \partial_b$ of the squared distance $d^2_y(x)=\dist^2(x, y)$ with respect to the fixed point $y$ is $\nabla d^2_y(x) = -2 \log_x(y)$. The Hessian operator (or double covariant derivative) $\nabla^2$ is the covariant derivative of the gradient. In a normal coordinate at the point $x$, the Christoffel symbols vanish at $x$ so that the Hessian  of the square distance can be expressed with the standard differential $D_x$ with respect to the footpoint $x$: $\nabla^2 d^2_y(x) = -2 (D_x \log_x(y))$. It can also be written in terms of the differentials of the exponential map  as $ \nabla^2 d^2_y(x) = ( \left. D\exp_x \right|_{\lcp{xy}})^{-1} \left. D_x \exp_x \right|_{\lcp{xy}}$ to explicitly make the link with Jacobi fields. Following \cite{brewin_riemann_2009}, we computed in \ref{suppA} the Taylor expansion of this matrix in a normal coordinate system  at $x$:
\begin{equation}
\label{eq:Diff_log} 
\left[  D_x \log_x(y) \right]^a_b = -\delta^a_b + \frac{1}{3} R^a_{cbd} \lcp{xy}^c \lcp{xy}^d + \frac{1}{12} \nabla_c R^a_{dbe}  \lcp{xy}^c \lcp{xy}^d \lcp{xy}^e  +  O(\varepsilon^3).
\end{equation}
Here, $R^a_{cbd}(x)$ are the coefficients of the curvature tensor at $x$ and Einstein summation convention implicitly sums  upon each index that appear up and down in the formula. Since we are in a normal coordinate system,  the zeroth order term is the identity matrix, like in Euclidean spaces, and the first order term vanishes. The Riemannian curvature tensor appears in the second order term and its covariant derivative in the third order term. Curvature is the leading term that makes this matrix departing from the identity (the Euclidean case) and may lead to the non invertibility of the differential.

\paragraph{Moments of point distributions}

Let $\{x_0,\ldots x_k\}$ be a set of $k+1$ points on a Manifold provided with weights $(\lambda_0, \ldots \lambda_k)$ that do not sum to zero. We may see these weighted points as the weighted sum of  Diracs $\mu(x) = \sum_i \lambda_i \delta_{x_i}(x)$.  As this distribution is not normalized and weights can be negative, it is generally not a probability. It is also singular with respect to the Riemannian measure. Thus, we have to take care in defining its moments as the Riemannian log and distance functions are not smooth at the cut-locus.  

\begin{definition} [$(k+1)$-pointed / punctured Riemannian manifold] $ $ \\
Let $\{x_0, \ldots x_k\} \in \M^{k+1}$  be a set of $k+1$ reference points in the $n$-dimensional Riemannian manifold $\M$ and $C(x_0, \ldots x_k) =  \cup_{i=0}^k C(x_i)$ be the union of the cut loci of these points. We call  the object consisting of the smooth manifold $\M$ and the $k+1$ reference points a $(k+1)$-pointed manifold. Likewise, we call the submanifold $ \Mstar = \M \setminus C(x_0, \ldots x_k)$ of the non-cut points of the $k+1$ reference points a $(k+1)$-punctured manifold.
\end{definition}
On $ \Mstar$, the distance to the points  $\{x_0, \ldots x_k\}$ is smooth. The Riemannian log function $\lcp{x x_i} = \log_x(x_i)$ is also well defined for all the points of $\Mstar$. Since the cut locus of each point is closed and has null measure, the punctured manifold $\Mstar$ is open and dense in $\M$, which means that it is a submanifold of $\M$. However, this submanifold is not necessarily connected.  For instance in the flat torus $(S_1)^n$, the cut-locus of $k+1 \leq n$ points divides the torus into $k^n$ disconnected cells.

\begin{definition} [Weighted moments of a $(k+1)$-pointed manifold] $ $ \\
Let $(\lambda_0, \ldots \lambda_k) \in \R^{k+1}$ such that $\sum_i \lambda_i \not = 0$. We call $\nlambda_i = \lambda_i / (\sum_{j=0}^k \lambda_j)$ the normalized weights. The weighted $p$-th order moment of a $(k+1)$-pointed Riemannian manifold is the $p$-contravariant tensor:
\begin{equation}
 {\mathfrak M}_{p}(x,\lambda) = \sum_i \lambda_i \underbrace{\lcp{xx_i} \otimes \lcp{xx_i} \ldots  \otimes  \lcp{xx_i}}_{\text{$p$ times}}.
\end{equation}
The normalized $p$-th order moment is: $\underline{\mathfrak M}_p(x,\lambda) = {\mathfrak M}_p(x,\nlambda) = {\mathfrak M}_p(x,\lambda)/ {\mathfrak M}_0(\lambda).$ Both tensors are smoothly defined on the punctured manifold $\Mstar$.
\end{definition}
The 0-th order moment ${\mathfrak M}_0(\lambda) = \sum_i \lambda_i = \mathds{1}\trp \lambda$ is the mass. The $p$-th order moment is homogeneous of degree 1 in $\lambda$ while the normalized $p$-th order moment is naturally invariant by a change of scale of the weights. For a fixed weight $\lambda$, the first order moment ${\mathfrak M}_1(x,\lambda)  =  \sum_i \lambda_i \lcp{xx_i}$ is a smooth vector field on the manifold $\Mstar$ whose zeros will be the subject of our interest. The second and higher order moments are smooth $(p,0)$ tensor fields that will be used in contraction with the Riemannian curvature tensor.

\paragraph{Affinely independent points on a manifold}

In a Euclidean space, $k+1$ points are affinely independent if their affine combination generates a $k$ dimensional subspace, or equivalently if none of the point belong to the affine span of the $k$ others. They define in that case a $k$-simplex. Extending these different definitions to manifolds lead to different notions. We chose a definition which rules out the singularities of constant curvature spaces and which guaranties the existence of barycentric subspaces around reference point. In the sequel, we assume by default that the $k+1$ reference points of pointed manifolds are affinely independent (thus $k \leq n$). Except for a few examples, the study of singular configurations is left for a future work.

\begin{definition}[Affinely independent points]
A set of $k+1$ points $\{x_0,\ldots x_k\}$ is affinely independent if no point is in the cut-locus of another and if all the sets of $k$ vectors $\{ \log_{x_i}(x_j) \}_{0 \leq j \not = i \leq k} \in T_{x_i}\M^k$ are linearly independent.
\label{def:AffineIndependence}
\end{definition}

\subsection{Example on the sphere ${\cal S}_n$}

We consider the unit sphere in dimension $n \geq 1$ embedded in $\R^{n+1}$. The tangent space at $x$ is  the space of vectors orthogonal to $x$:  $T_x{\cal S}_n = \{ v \in \R^{n+1}, v\trp x =0\}$ and the Riemannian metric is inherited from the Euclidean metric of the embedding space. With these conventions, the Riemannian distance is the arc-length $d(x,y) = \arccos( x\trp y)= \theta \in [0,\pi]$. Using the smooth function $f(\theta) =  { \theta}/{\sin\theta}$ from $]-\pi;\pi[$ to $\R$ which is always greater than one, the spherical exp and log maps are:
\begin{eqnarray}
\exp_x(v) & = & \cos(\| v\|) x +  \sin(\| v\|) v / \| v\| \\
\log_x(y) & = & f(\theta) \left( y - \cos\theta\: x \right)
\quad \text{with} \quad \theta = \arccos(x\trp y).
\end{eqnarray}
\paragraph{Hessian} The orthogonal projection  $v=(\Id -x x\trp)w$ of a vector $w \in \R^{n+1}$ onto the tangent space $T_x{\cal S}_n$ provides a chart around a point $x\in {\cal S}_n$ where we can compute the gradient and Hessian of the squared Riemannian distance (detailed in \ref{suppA}).  Let $u={(\Id-xx\trp)y }/ {\sin \theta} = { \log_x(y) }/{\theta}$ be the unit tangent vector pointing from $x$ to $y$, we obtain:
\begin{eqnarray}
 H_x(y) = \nabla^2 d^2_y(x) & =  &2 u u\trp + 2 f( \theta  )\cos\theta  (\Id -xx\trp - u u\trp).
\label{eq:HessDistSphere2}
\end{eqnarray}
By construction, $x$ is an eigenvector with eigenvalue $0$. Then the vector $u$ (or equivalently $\log_x(y) = \theta u$) is an eigenvector with eigenvalue $1$. To finish, every vector which is orthogonal to these two vectors (i.e. orthogonal to the plane spanned by 0, $x$ and $y$) has eigenvalue $ f(\theta)\cos\theta = \theta \cot \theta$. This last eigenvalue is positive for $\theta \in [0,\pi/2[$, vanishes for $\theta = \pi/2$ and becomes negative for $\theta \in ]\pi/2 \pi[$. We retrieve here the results of \cite[lemma 2]{buss_spherical_2001} expressed in a more general coordinate system.

\paragraph{Moments of a $k+1$-pointed sphere}
 We  denote a set of $k+1$ point on the sphere and the matrix of their coordinates by $X=[x_0,\ldots x_k]$. The cut locus of $x_i$ is its antipodal point $-x_i$ so that the $(k+1)$-punctured manifold is $\Mstar = {\cal S}_n \setminus -X$. Using the invertible diagonal matrix $F(X,x) = \mbox{Diag}( f( \arccos(x_i \trp x) ) )$, the first weighted moment is:
\begin{equation}
{\mathfrak M}_1(x, \lambda)  = \textstyle \sum_i \lambda_i \lcp{x x_i} = ( \Id -  x x\trp) X F(X,x) \lambda.
\label{eq:MomemtSphere}
\end{equation}

\paragraph{Affine independence of the reference points}
Because no point is antipodal nor identical to another, the plane generated by 0, $x_i$ and $x_j$ in the embedding space is also generated by 0, $x_i$ and the tangent vector $\log_{x_i}(x_j)$. This can be be seen using a stereographic projection of pole $-x_i$ from ${\cal S}_n$ to $T_{x_i} {\cal S}_n$. Thus, 0, $x_i$ and the $k$ independent vectors $\log_{x_i}(x_j)$ ($j \not = i$)  generate the same linear subspace of dimension $k+1$ in the embedding space than the points $\{0, x_0,\ldots x_k\}$. We  conclude that $k+1$ points on the sphere are affinely independent if and only if the matrix $X=[x_0,\ldots x_k]$ has  rank $k+1$.

\subsection{Example on the hyperbolic space $\Hyp^n$}
\label{sec:hyperbolic}

We now consider the hyperboloid of equation $-x_0^2 + x_1^2 \ldots  x_n^2 = -1$ ($x_0 > 0$)  embedded in $\R^{n+1}$ ($n \geq 2$). Using the notation $x=(x_0,\hat x)$ and the indefinite non-degenerate symmetric bilinear form $\scal{x}{y}_* = x\trp J y= \hat x\trp \hat y -x_0 y_0$ with $ J = \mbox{diag}(-1, \Id_n)$, the hyperbolic space $\Hyp^n$ can be seen as the pseudo-sphere $\|x\|^2_* = \|\hat x\|^2 -x_0^2 = -1$ of radius -1 in the Minkowski space $\R^{1,n}$. A point can be parametrized  by $x=(\sqrt{1+\|\hat x\|^2}, \hat x)$ for $\hat x \in \R^n$ (Weierstrass coordinates). The restriction of the Minkowski pseudo-metric of the embedding space $\R^{1,n}$ to the tangent space of $T_x\Hyp^n$ is positive definite. It defines the natural Riemannian metric on the hyperbolic space. With these conventions, geodesics are the trace of 2-planes passing through the origin and the Riemannian distance is the arc-length $d(x,y) = \arccosh( - \scal{x}{y}_* )$. Using the smooth positive function $f_*(\theta) = { \theta}/{\sinh(\theta)}$ from $\R$ to $]0,1]$, the hyperbolic exp and log maps are:
\begin{eqnarray}
\exp_x(v) & = & \cosh(\| v\|_* ) x +  {\sinh(\| v\|_* )} v / {\| v\|_* } \\
\log_x(y) & = & f_*(\theta) \left( y - \cosh(\theta) x \right)
\quad \text{with} \quad \theta = \arccosh( -\scal{x}{y}_* ).
\end{eqnarray}

\paragraph{Hessian} The orthogonal projection  $v=w + \scal{w}{x}_* x = (\Id + x x\trp J) w$ of a  vector $w\in \R^{1,n}$ onto the tangent space at $T_x \Hyp^n$ provides a chart around the point $x\in \Hyp^n$ where we can compute the gradient and Hessian of the hyperbolic squared distance (detailed in \ref{suppA}).  Let $u= { \log_x(y) }/{\theta}$ be the unit tangent vector pointing from $x$ to $y$, the Hessian is:
\begin{equation}
 H_x(y) = \nabla^2 d^2_y(x) = 2  J \left( u u\trp + \theta \coth \theta (J + x x\trp -u u\trp) \right) J  
\label{eq:GradDistHyper}
\end{equation}
By construction, $x$ is an eigenvector with eigenvalue $0$. The vector $u$ (or equivalently $\log_x(y) = \theta u$) is an eigenvector with eigenvalue $1$. Every vector orthogonal to these two vectors (i.e. to the plane spanned by 0, $x$ and $y$) has eigenvalue $ \theta \coth \theta \geq 1$ (with equality only for $\theta=0$). Thus, the Hessian of the squared distance is always positive definite. As a consequence, the squared distance is a convex function and has a unique minimum. This was of course expected for a negatively curved space \citep{bishop_manifolds_1969}.

\paragraph{Moments of a $k+1$-pointed hyperboloid}
We now pick $k+1$ points on the hyperboloid whose matrix of coordinates is denoted by $X=[x_0,\ldots x_k]$. Since there is no cut-locus, the $(k+1)$-punctured manifold is the manifold itself: $\Mstar = \M = {\Hyp}^n$. Using the invertible diagonal matrix $F_*(X,x) = \mbox{Diag}( f_*( \arccosh( -\scal{x_i}{x}_* ) ) )$, the  first weighted moment is
\begin{equation}
{\mathfrak M}_1(x, \lambda)  = \textstyle \sum_i \lambda_i \log_x(x_i) = (\Id + x x\trp J) X F_*(X,x) \lambda.
\label{eq:MomemtHyperboloid}
\end{equation}

\paragraph{Affine independence} 
As for the sphere, the origin, the point $x_i$ and the $k$ independent vectors $\log_{x_i}(x_j) \in T_{x_i}{\Hyp}^n$ ($j \not = i$) generate the same $k+1$ dimensional linear subspace of the embedding Minkowski space $\R^{1,n}$ than the points $\{x_0, \ldots x_k\}$. Thus, $k+1$ points on the hyperboloid are affinely independent if and only if the matrix $X$ has rank $k+1$.

\section{Exponential Barycentric Subspaces (EBS) and Affine Spans}
\label{Sec:Bary}

\subsection{Affine subspaces in a Euclidean space}

In Euclidean PCA, a zero dimensional space is a point, a one-dimensional space is a line, and an affine subspace of dimension $k$ is generated by a point and $k \leq n$ linearly independent vectors. We can also generate such a subspace by taking the affine hull of $k+1$ affinely independent points: $\Aff(x_0,\ldots x_k) =\left\{ x = \sum_i \lambda_i x_i, \text{with} \sum_{i=0}^k \lambda_i = 1\right\}.$  These two definitions are equivalent in a Euclidean space, but turn out to have different generalizations in manifolds. 

When  there exists a vector of coefficients $\lambda = (\lambda_0, \lambda_1, \ldots, \lambda_k) \in \R^{k+1}$ (which do not sum to zero) such that $\sum_{i=0}^k \lambda_i (x_i-x) =0,$ then $\lambda$ is called the barycentric coordinates of the point $x$ with respect to the $k$-simplex $\{x_0, \ldots x_k\}$. When points are dependent, some extra care has to be taken to show that the affine span is still well defined but with a lower dimensionality.  Barycentric coordinates are homogeneous of degree one:
\begin{definition}[Projective space of barycentric coordinates (weights)] \label{def:Pk}
Barycentric  coordinates of $k+1$ points live in the real projective space $\R P^n =  (\R^{k+1} \setminus \{0\})/\R^*$ from which we remove the codimension 1 subspace $\mathds{1}^{\perp}$ orthogonal to the point $\mathds{1} = (1:1: \ldots 1)$:
\[ \textstyle
\Pk = \left\{   \lambda=(\lambda_0 : \lambda_1 : \ldots : \lambda_k) \in \R P^n \text{ s.t. } \mathds{1}^{\top}\lambda \not = 0   \right\}.
\]
\end{definition}

\begin{wrapfigure}[12]{r}{0.5\textwidth} 
\centering
\includegraphics[width=0.5\textwidth]{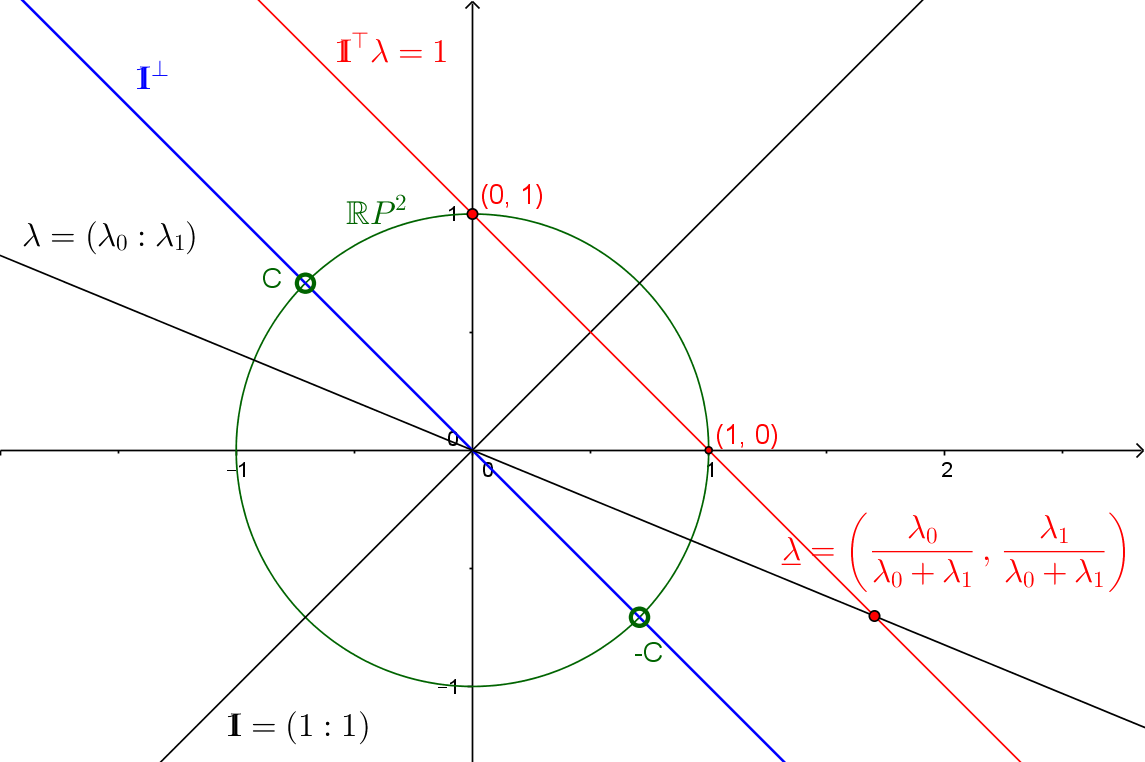}
\caption{Projective weights for $k=1$.}
\label{fig:P2}
\end{wrapfigure}
Projective points are represented by lines through 0 in Fig.\ref{fig:P2}.
Standard representations are given by the intersection of the lines with the "upper" unit sphere $S_k$ of $\R^{k+1}$  with north pole $\mathds{1}/\sqrt{k+1}$ or by the affine $k$-plane of $\R^{k+1}$ passing through  the point $\mathds{1}/(k+1)$ and orthogonal to this vector. This last representation give the normalized weight $ \underline{\lambda}_i= \lambda_i / (\sum_{j=0}^k \lambda_j)$: the vertices of the simplex have homogeneous coordinates $(1 : 0 : ... : 0) \ldots (0 : 0 : ... : 1)$. To prevent weights to sum up to zero, we have to remove the codimension 1 subspace $\mathds{1}^{\perp}$ orthogonal to the projective point $\mathds{1} = (1:1: \ldots 1)$ (blue line in Fig.\ref{fig:P2}). This excluded subspace corresponds to the equator of the pole  $\mathds{1}/\sqrt{k+1}$  for the sphere representation (points $C$ and $-C$ identified in Fig.\ref{fig:P2}), and to the projective completion (points at infinity) of the affine $k$-plane of normalized weights.

\subsection{EBS and Affine Span in Riemannian manifolds}

\begin{definition} [Barycentric coordinates in a $(k+1)$-pointed manifold] 
 A point $x \in \Mstar$ has barycentric coordinates $\lambda \in \Pk$ with respect to $k+1$ reference affinely independent points if  
\begin{equation}
\label{eq:Bary}
 {\mathfrak M}_1(x,\lambda) = \textstyle \sum_{i=0}^k \lambda_i \lcp{x x_i} = 0 . 
\end{equation}
\end{definition}
Since the Riemannian log function $\lcp{x x_i} = \log_x(x_i)$ is multiply defined on the cut locus of $x_i$, this definition cannot be extended to the the union of all cut loci $C(x_0, \ldots x_k)$, which is why we restrict the definition to $\Mstar$. 
\begin{definition}[Exponential Barycentric Subspace (EBS)] The EBS of the affinely independent points $(x_0,\ldots x_k) \in \M^{k+1}$ is the locus of weighted exponential barycenters of the reference points in $\Mstar$:
\[
\mbox{EBS}(x_0, \ldots x_k) = \{ x\in \Mstar | \exists \lambda \in \Pk :  {\mathfrak M}_1(x,\lambda) =0 \}.
\]
\end{definition}
The reference points could be seen as landmarks in the manifold.
This definition is fully symmetric wit respect to all of them, while one point is privileged in geodesic subspaces.  We could draw a link with archetypal analysis \citep{Cutler:1994:AA} which searches for extreme data values such that all of the data can be well represented as convex mixtures of the archetypes. However, extremality is not mandatory in our framework. 

\begin{proposition}[Dual subspace of barycentric coordinates] 
The subspace of barycentric coordinates $\Lambda(x) = \{ \lambda \in \Pk | {\mathfrak M}_1(x,\lambda) =0 \}$  at point $x \in \Mstar$ is either void, a point, or a linear subspace of $\Pk$.
\end{proposition}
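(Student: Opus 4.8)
The plan is to exploit the fact that, for a fixed base point $x \in \Mstar$, the first moment ${\mathfrak M}_1(x,\lambda) = \sum_{i=0}^k \lambda_i \lcp{x x_i}$ depends \emph{linearly} on the weight vector $\lambda = (\lambda_0, \ldots, \lambda_k)$. Choosing an orthonormal basis of $T_x\M$ and collecting the log vectors into the $n\times(k+1)$ matrix $Z(x) = [\lcp{x x_0}, \ldots, \lcp{x x_k}]$, the defining equation becomes the homogeneous linear system ${\mathfrak M}_1(x,\lambda) = Z(x)\,\lambda = 0$. Since $x \in \Mstar$ avoids the cut loci of all reference points, every $\lcp{x x_i}$ is well defined and $Z(x)$ is a genuine (single-valued) matrix, so this reformulation is legitimate.

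First I would note that the solution set $K(x) = \ker Z(x) = \{\lambda \in \R^{k+1} : Z(x)\lambda = 0\}$ is a linear subspace of $\R^{k+1}$, of dimension $(k+1) - \operatorname{rank} Z(x)$. The set $\Lambda(x)$ is then, by definition, the image of $K(x) \setminus \{0\}$ under projectivization, restricted to $\Pk$; equivalently it is the projective subspace $P(K(x))$ intersected with $\Pk$, i.e. with the complement of the excluded hyperplane $\mathds{1}^{\perp} = \{\lambda : \mathds{1}^{\top}\lambda = 0\}$.

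The classification then follows from a short case analysis on $d = \dim K(x)$ together with the position of $K(x)$ relative to $\mathds{1}^{\perp}$. If $d = 0$ then $P(K(x))$ is empty and $\Lambda(x)$ is void. If $d = 1$, then $K(x) = \R\lambda$ for some $\lambda \neq 0$, and $\Lambda(x)$ is the single projective point $[\lambda]$ when $\mathds{1}^{\top}\lambda \neq 0$, and void otherwise. If $d \geq 2$, then $P(K(x))$ is a projective subspace of dimension $d-1 \geq 1$; intersecting with $\Pk$ either removes everything (when $K(x) \subseteq \mathds{1}^{\perp}$, giving the void case) or cuts out the codimension-one ``hyperplane at infinity'' $P(K(x)) \cap \mathds{1}^{\perp}$, leaving an affine --- hence linear --- subspace of $\Pk$ of dimension $d-1$. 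This exhausts the trichotomy.

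The content here is almost entirely linear-algebraic, so I do not expect a genuine obstacle; the only point requiring care is the bookkeeping with the quotient by $\R^*$ and the removed hyperplane $\mathds{1}^{\perp}$, to make sure that intersecting a projective subspace with the affine chart $\Pk$ is correctly identified with a linear subspace in the sense used by the paper, and that the degenerate situations ($K(x)$ trivial, or entirely contained in $\mathds{1}^{\perp}$) are correctly folded into the ``void'' alternative.
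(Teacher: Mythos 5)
Your proposal is correct and follows essentially the same route as the paper: the paper's own justification is precisely the linearity of ${\mathfrak M}_1(x,\lambda)$ in $\lambda$ (so that solutions form a linear space), and its Theorem~\ref{THM1} makes your kernel formulation explicit by identifying $\Lambda(x)$ with $\mathrm{Ker}(Z(x))$ via the SVD of $Z(x)$. Your treatment is in fact slightly more careful than the paper's one-line argument, since you explicitly track the projectivization and the excluded hyperplane $\mathds{1}^{\perp}$, folding the degenerate cases into the void alternative.
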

We  see that a point belongs to $\EBS(x_0, \ldots x_k)$  if and only if  $\Lambda(x) \not = \emptyset$. Moreover, any linear combination of  weights that satisfy the equation is also a valid weight so that $\Lambda(x)$ can only be a unique point (dimension 0) or a linear subspace of $\Pk$. The dimension of the dual space $\Lambda(x)$ is actually controlling the local dimension of the barycentric space, as we will see below.

The discontinuity of the Riemannian log on the cut locus of the reference points may hide the continuity or discontinuities of the exponential barycentric subspace. In order to ensure the completeness  and potentially reconnect different components, we  consider the closure of this set.
\begin{definition}[Affine span of $k+1$ affinely independent points] 
The affine span is the closure of the EBS in $\M$: $ \Aff(x_0, \ldots x_k) = \overline{\mbox{EBS}}(x_0, \ldots x_k).$ Because we assumed that $\M$ is geodesically complete, this is equivalent to the metric completion of the EBS.  
\end{definition}

\subsection{Characterizations of the EBS}

Let $Z(x)= [\lcp{x x_0},\ldots \lcp{x x_k}]$ be the smooth field of $n\times (k+1)$ matrices of vectors pointing from any point $x \in \Mstar$ to the reference points. We can rewrite the constraint $\sum_i \lambda_i \lcp{x x_i} =0$ in matrix form: ${\mathfrak M}_1(x,\lambda) = Z(x)\lambda =0,$ where  $\lambda$ is the $k+1$ vector of homogeneous coordinates $\lambda_i$.

\begin{theorem}[Characterization of the exponential barycentric subspace] 
\label{THM1}
Let $Z(x)=U(x)\: S(x)\: V(x)\trp$ be a singular decomposition of the $n\times (k+1)$ matrix fields $Z(x)= [\lcp{x x_0},\ldots \lcp{x x_k}]$ on $\Mstar$ with singular values $\{s_i(x)\}_{0\leq i \leq k}$ sorted in decreasing order. $\mbox{EBS}(x_0, \ldots x_k)$ is the zero level-set of the smallest singular value $s_{k+1}(x)$ and the dual subspace of valid barycentric weights is spanned by the right singular vectors corresponding to the $l$ vanishing singular values: $\Lambda(x) = \Span(v_{k-l}, \ldots v_{k})$ (it is void if $l=0$).
\end{theorem}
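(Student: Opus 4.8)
The plan is to translate the whole statement into linear algebra on the matrix field $Z(x)$ and then read it off from the singular value decomposition. Writing the first moment as $\mathfrak{M}_1(x,\lambda) = Z(x)\,\lambda$, a point $x \in \Mstar$ lies in $\EBS(x_0,\ldots,x_k)$ precisely when the homogeneous system $Z(x)\lambda = 0$ admits a solution $\lambda \in \Pk$, and the dual subspace $\Lambda(x)$ is the projectivization of $\ker Z(x)$ restricted to $\Pk$. Everything therefore reduces to (i) describing $\ker Z(x)$ and (ii) locating it relative to the excluded hyperplane $\one^{\perp}$. I would also note at the outset that $Z$ is smooth on $\Mstar$, so its singular values are continuous there and the phrase ``zero level-set of $s_{k+1}$'' is well posed.

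First I would invoke the decomposition $Z(x) = U(x)\,S(x)\,V(x)\trp$ and use the standard fact that the right singular vectors associated with vanishing singular values form an orthonormal basis of $\ker Z(x)$. Hence, if $l$ denotes the number of vanishing singular values, then $\dim \ker Z(x) = l$ and $\ker Z(x)$ is spanned by those $l$ right singular vectors. In particular $\ker Z(x) \neq \{0\}$ if and only if $\mathrm{rank}\,Z(x) < k+1$, i.e.\ if and only if the smallest singular value $s_{k+1}(x)$ vanishes. This already delivers the spanning description of $\Lambda(x)$ and the dimension count, and it recovers the trichotomy of the preceding Proposition: $\Lambda(x)$ is void when $l=0$, a single projective point when $l=1$, and a higher-dimensional projective subspace when $l \geq 2$.

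It then remains to identify the zero level-set of $s_{k+1}$ with the EBS, and here the only genuine difficulty is the intersection with $\Pk$. One inclusion is immediate: any admissible $\lambda \in \Pk$ is a nonzero element of $\ker Z(x)$, forcing $s_{k+1}(x)=0$. The converse --- that $s_{k+1}(x)=0$ actually produces a weight with $\one\trp\lambda \neq 0$ --- requires $\ker Z(x) \not\subseteq \one^{\perp}$, and I expect this to be the main obstacle. The plan is to observe that $\ker Z(x) \subseteq \one^{\perp}$ is equivalent to $\one$ belonging to the row space of $Z(x)$, i.e.\ to the existence of a tangent vector $w \in T_x\M$ with $\scal{w}{\lcp{x x_i}}_x = 1$ for every $i$ --- all reference points lying, as seen from $x$, on a common affine hyperplane. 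I would rule this degenerate alignment out at the reference points themselves (at $x = x_i$ the $i$-th column of $Z$ vanishes, so $e_i \in \ker Z(x_i)\cap\Pk$) and then leverage the affine independence hypothesis of Definition~\ref{def:AffineIndependence} together with the continuity of $Z$ on $\Mstar$ to exclude it, so that a nonzero kernel always supplies valid barycentric weights and the two sets coincide.
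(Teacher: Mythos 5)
Your first two paragraphs reproduce, in slightly expanded form, exactly the paper's own proof: the paper also rewrites the barycentric equation as $Z(x)\lambda=0$, invokes the fact that the right singular vectors attached to the vanishing singular values span the kernel, identifies $\Lambda(x)$ with $\mathrm{Ker}(Z(x))$, and stops there. So up to that point there is nothing to object to, and you have correctly isolated the one step the paper never performs: with Definition~\ref{def:Pk}, a point of $\Mstar$ belongs to the EBS only if some kernel vector satisfies $\one\trp\lambda\neq 0$, so identifying the EBS with the zero level-set of $s_{k+1}$ requires $\ker Z(x)\not\subseteq\one^{\perp}$.

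The genuine gap is that your proposed way of closing this step cannot work, because the statement you want to prove is false on $\Mstar$. Affine independence (Definition~\ref{def:AffineIndependence}) is a condition at the reference points; it does not propagate by continuity to a pointwise linear-algebra condition at arbitrary $x$, and the degenerate alignment you want to exclude does occur. Concretely, take the affinely independent points $x_0=e_1$, $x_1=e_2$, $x_2=e_3$ on ${\cal S}_2$ and the curve $x=(u,-v,-v)$ with $u^2+2v^2=1$, $u,v>0$, which stays in $\Mstar$. By Eq.~(\ref{eq:MomemtSphere}) with $X=\Id_3$, the kernel of $Z(x)$ is the line spanned by the vector with components $\lambda_i=h(x_i)$, where $x_1,x_2,x_3$ denote the coordinates of $x$ and
\[
h(t)=\frac{t\sqrt{1-t^2}}{\arccos t},
\qquad\text{so that}\qquad
\one\trp\lambda = h(u)+2\,h(-v).
\]
Numerically $\one\trp\lambda\approx +0.010$ at $v=0.60$ and $\approx -0.123$ at $v=0.65$, so it vanishes at some intermediate $v^{*}$. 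At that point the smallest singular value of $Z(x)$ is zero (the kernel is one-dimensional), yet every kernel vector lies in $\one^{\perp}$, so $\Lambda(x)=\emptyset$ and $x\notin\EBS(x_0,x_1,x_2)$ under the paper's definitions. (Viewing the same configuration on a great $2$-sphere of ${\cal S}_n$ gives a counterexample for any $n\geq 2$, so this is not an artifact of $k=n$.) The conclusion is that the zero level-set of $s_{k+1}$ can strictly contain the EBS: the theorem is exactly true only under the convention --- which the paper adopts tacitly by writing $\Lambda(x)=\mathrm{Ker}(Z(x))$ --- that kernel directions inside $\one^{\perp}$ (the ``coordinates at infinity'' that reappear in Theorem~\ref{THM6}) are admitted as weights. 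So your reduction is sound and your diagnosis of the main obstacle is sharper than the paper's treatment, but the final step as sketched would fail, and it cannot be repaired without weakening the statement or enlarging $\Lambda(x)$ to the full kernel.
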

\begin{proof}
Since $U$ and $V$ are orthogonal matrices, $Z(x)\lambda=0$ if and only if at least one singular value (necessarily the smallest one $s_{k}$) is null, and $\lambda$ has to live in the corresponding right-singular space: $\Lambda(x) = Ker(Z(x))$. If we have only one zero singular value ($s_{k+1}=0$ and $s_k>0$), then $\lambda$ is proportional to $v_{k+1}$. If $l$ singular values vanish, then we have a higher dimensional linear subspace of solutions for $\lambda$. 
\end{proof}

\begin{theorem} 
\label{THM5}
Let $G(x)$ be the matrix expression of the Riemannian metric in a local coordinate system and $\Omega(x) = Z(x)\trp  G(x) Z(x)$ be the smooth $(k+1)\times (k+1)$ matrix field  on $\Mstar$ with  components $\Omega_{ij}(x) = \scal{  \lcp{x x_i} }{ \lcp{x x_j} }_x$ and  $\Sigma(x) =  {\mathfrak M}_2(x,\mathds{1} ) = \sum_{i=0}^k \lcp{x x_i} \: \lcp{x x_i}\trp =  Z(x) Z(x)\trp$be the (scaled) \ $n \times n$ covariance matrix field of the reference points. $\EBS(x_0, \ldots x_k)$  is the zero level-set of: $\det(\Omega(x))$, the minimal eigenvalue $\sigma_{k+1}^2$ of $\Omega(x)$, the $k+1$ eigenvalue (in decreasing order) of the covariance $\Sigma(x)$.
\end{theorem}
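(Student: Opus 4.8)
The plan is to reduce all three stated conditions to the single rank criterion already established in Theorem~\ref{THM1}: a point $x\in\Mstar$ lies on $\EBS(x_0,\ldots x_k)$ if and only if the $n\times(k+1)$ matrix $Z(x)$ loses rank, i.e. $\ker Z(x)\neq\{0\}$, equivalently $\mathrm{rank}\,Z(x)<k+1$ (its smallest singular value vanishing). Each of $\det\Omega$, the minimal eigenvalue of $\Omega$, and the $(k+1)$-th eigenvalue of $\Sigma$ will be shown to vanish exactly on this rank-deficient locus, so everything reduces to comparing the kernels (equivalently the ranks) of $\Omega$ and $\Sigma$ with that of $Z$.

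First I would treat $\Omega(x)=Z(x)\trp G(x)Z(x)$. The key observation is that, since $G(x)$ is symmetric positive definite (it is the matrix of a Riemannian metric), the quadratic form $\lambda\mapsto\lambda\trp\Omega\lambda=(Z\lambda)\trp G\,(Z\lambda)$ is nonnegative and vanishes precisely when $Z\lambda=0$. Hence $\Omega$ is positive semi-definite and $\ker\Omega=\ker Z$, so $\mathrm{rank}\,\Omega=\mathrm{rank}\,Z$. Consequently the following are equivalent: $\ker Z\neq\{0\}$; $\Omega$ is singular, i.e. $\det\Omega=0$; and the smallest eigenvalue $\sigma_{k+1}^2$ of $\Omega$ (nonnegative by semi-definiteness) equals $0$. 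This disposes of the first two characterizations at once.

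Next I would treat $\Sigma(x)=Z(x)Z(x)\trp$. It is manifestly positive semi-definite, and its nonzero eigenvalues coincide with those of $Z\trp Z$, namely the squares $s_i^2$ of the singular values of $Z$. Because $Z$ has only $k+1$ columns, $\Sigma$ has rank at most $k+1$; assuming $n\geq k+1$ (the generic case $k<n$; for $k=n$ the affine span already fills the manifold and $\ker Z\neq\{0\}$ everywhere), the $(k+1)$-th eigenvalue in decreasing order is the smallest of the potentially nonzero ones and vanishes exactly when $\mathrm{rank}\,Z<k+1$. Chaining these equivalences through Theorem~\ref{THM1} yields that $\EBS(x_0,\ldots x_k)$ is simultaneously the zero level set of $\det\Omega$, of the minimal eigenvalue of $\Omega$, and of the $(k+1)$-th eigenvalue of $\Sigma$.

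The one genuinely subtle point, which I would emphasize, is that $\Omega=Z\trp GZ$ carries the metric $G$ while $\Sigma=ZZ\trp$ does not, so their nonzero spectra differ in general and are not both simply the squared singular values of $Z$. What rescues the statement is that all three criteria detect only the \emph{vanishing} of eigenvalues, i.e. rank deficiency, which is the purely linear-algebraic, metric-independent property $\ker Z\neq\{0\}$; positive-definiteness of $G$ guarantees $\ker\Omega=\ker Z$, so the $G$-weighting can neither create nor destroy a zero eigenvalue. I therefore never need the full ordered spectra, only this kernel characterization.
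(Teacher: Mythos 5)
Your proposal is correct and follows essentially the same route as the paper: both rest on the identity $\lambda\trp \Omega(x)\lambda = \| Z(x)\lambda \|^2_{G(x)}$, so that zero eigenvalues of $\Omega$ detect exactly $\ker Z(x) \neq \{0\}$ independently of the metric, and both identify the nonzero eigenvalues of $\Sigma = Z Z\trp$ with the squared singular values of $Z$ to handle the third criterion. The paper phrases the first step as a Lagrangian/Rayleigh-quotient minimization and the second via the SVD $Z = U S V\trp$, but the mathematical content is identical to your kernel argument; your explicit treatment of the $k=n$ edge case (where a $(k+1)$-th eigenvalue of the $n\times n$ matrix $\Sigma$ does not exist) is a small point the paper glosses over.
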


\begin{proof}
The constraint ${\mathfrak M}_1(x,\lambda)=0$ is satisfied if and only if:
\[
\| {\mathfrak M}_1(x,\lambda) \|^2_x =  \left\| { \textstyle  \sum_i  \lambda_i \lcp{x x_i}} \right\|^2_{x} = {\lambda\trp \Omega(x) \lambda} =0.
\]
As the function is homogeneous  in $\lambda$, we can restrict to unit vectors. Adding this constrains with a Lagrange multiplier to the cost function, we end-up with the  Lagrangian ${\cal L}(x, \lambda, \alpha) = \lambda\trp \Omega(x) \lambda +\alpha (\lambda\trp \lambda -1)$. The minimum with respect to $\lambda$ is obtained for the eigenvector $\mu_{k+1}(x)$ associated to the smallest eigenvalue $\sigma_{k+1}(x)$ of $\Omega(x)$ (assuming that eigenvalues are sorted in decreasing order) and we have $\|{\mathfrak M}_1(x, \mu_{k+1}(x))\|^2_2 = \sigma_{k+1}(x)$, which is null if and only if the minimal eigenvalue is zero. Thus, the barycentric subspace of $k+1$ points is the locus of rank deficient matrices $\Omega(x)$:
\[
\EBS(x_0, \ldots x_k) = \phi\inv(0) \quad \mbox{where} \quad \phi(x) = \det(\Omega(x)).
\]

One may want to relate the singular values of $Z(x)$ to the eigenvalues of $\Omega(x)$. The later are the square of the  singular values of $G(x)^{1/2}Z(x)$. However, the left multiplication by the square root of the metric (a  non singular but non orthogonal matrix) obviously changes the singular values in general except for vanishing ones: the (right) kernels of $G(x)^{1/2}Z(x)$ and $Z(x)$ are indeed the same. This shows that the EBS is an affine notion rather than a metric one, contrarily to the Fr\'echet / Karcher  barycentric subspace. 

To draw the link with the $n\times n$ covariance matrix of the reference points, let us notice first that the definition does not assumes that the coordinate system is orthonormal. Thus, the eigenvalues of the covariance matrix depend on the chosen coordinate system, unless they vanish. In fact, only the joint eigenvalues of $\Sigma(x)$ and $G(x)$ really make sense, which is why this decomposition is called the proper orthogonal decomposition (POD). Now, the singular values of $Z(x)=U(x) S(x) V(x)\trp$ are also the square root of the first $k+1$ eigenvalues of $\Sigma(x) = U(x) S^2(x) U(x)\trp$, the remaining $n-k-1$ eigenvalues being null. Similarly, the singular values of $G(x)^{1/2}Z(x)$ are the square root of the first $k+1$ joint eigenvalues of $\Sigma(x)$ and $G(x)$. Thus, our barycentric subspace may also be characterized as the zero level-set of the $k+1$ eigenvalue (sorted in decreasing order) of $\Sigma$, and this characterization is once again independent of the basis chosen. 
\end{proof}

\subsection{Spherical EBS and affine span}
\label{Sec:SphericalEBS}

From Eq.(\ref{eq:MomemtSphere}) we identify the matrix: $Z(x) =  ( \Id -  x x\trp) X F(X,x).$ Finding points $x$ and weights $\lambda$ such that $Z(x)\lambda=0$ is a classical matrix equation, except for the  scaling matrix $F(X,x)$ acting on homogeneous projective weights, which is non-stationary and non-linear in both $X$ and $x$. However, since $F(X,x) = \mbox{Diag}( \theta_i /\sin \theta_i )$ is an invertible diagonal matrix, we can introduce {\em renormalized weights} $\tilde{\lambda} = F(X,x) \lambda, $ which leaves us with the equation $ ( \Id -  x x\trp) X \tilde \lambda=0$. The solutions  under the constraint $\|x\|=1$ are given by $(x\trp X \tilde{\lambda} ) x = X \tilde{\lambda}$ or more explicitly $x = \pm X \tilde{\lambda} / \| X \tilde{\lambda}\|$ whenever $X \tilde{\lambda} \not = 0$. This condition is ensured if $Ker(X)=\{0\}$. Thus, when the reference points are linearly independent, the point $x \in {\cal M}^*(X) $ has to belong to the Euclidean span of the reference vectors. Notice that for each barycentric coordinate we have two two antipodal solution points. Conversely, any unit vector  $x = X\tilde \lambda$  of the Euclidean span of $X$ satisfies the equation $( \Id -  x x\trp) X \tilde \lambda = (1-\|x\|^2) X \tilde \lambda =0$, and is thus a point of the EBS provided that it is not at the cut-locus of one of the reference points. This shows that 
\begin{equation}
 \EBS(X) = \Span\{x_0, \ldots x_k\} \cap {\cal S}_n \setminus X. 
\end{equation}

Using the renormalization principle, we can orthogonalize the reference points: let $X=U S V\trp$ be a singular value decomposition of the matrix of reference vectors. All the singular values $s_i$ are positive since the reference vectors $x_i$ are assumed to be linearly independent.  Thus, $\mu = S V\trp \tilde{\lambda} = S V\trp  F(X,x) \lambda$ is an invertible change of coordinate, and we are left with solving $ ( \Id -  x x\trp) U\mu =0$. By definition of the singular value decomposition, the Euclidean spans of $X$ and $U$ are the same, so that $\EBS(U) = \Span\{x_0, \ldots x_k\} \cap {\cal S}_n \setminus -U$.  This shows that the exponential barycentric subspace generated by the original points $X=[x_0, \ldots x_k]$ and the orthogonalized points $U=[u_0, \ldots u_k]$ are the same, except at the cut locus of all these points, but with different barycentric coordinates. 

To obtain the affine span, we take the closure of the EBS, which incorporates the cut locus of the reference points: $\Aff(X) = \Span\{x_0, \ldots x_k\}  \cap {\cal S}_n$. Thus, for spherical data as for Euclidean data, the affine span only depend on the reference points through the point of the Grassmanian they define.

\begin{theorem}[Spherical affine span] 
 The affine span $\Aff(X)$ of $k+1$ linearly independent reference unit points $X=[x_0,  \ldots x_k]$ on the $n$-dimensional sphere ${\cal S}_n$ endowed with the canonical metric is the  great subsphere of dimension k that contains the reference points.  \label{THM7}
\end{theorem}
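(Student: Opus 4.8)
The plan is to assemble the characterization of the spherical EBS derived in the paragraphs above and then pass to the closure; almost all the work has in fact already been done, so the argument is mostly one of bookkeeping. The essential fact in hand is that, after the renormalization $\tilde\lambda = F(X,x)\lambda$, the barycentric equation ${\mathfrak M}_1(x,\lambda) = Z(x)\lambda = 0$ becomes $(\Id - xx\trp)X\tilde\lambda = 0$, whose unit-norm solutions are exactly the unit vectors lying in the Euclidean span of the reference vectors. Writing $V := \Span\{x_0,\ldots,x_k\}$, this gives $\EBS(X) = (V \cap {\cal S}_n) \setminus \{-x_0,\ldots,-x_k\}$: every unit vector of $V$ is of the form $X\tilde\lambda/\|X\tilde\lambda\|$ for a valid weight (recall $X\tilde\lambda \neq 0$ since $X$ has trivial kernel), and the only such vectors excluded from the punctured manifold $\Mstar = {\cal S}_n \setminus (-X)$ are the antipodes $-x_i$ of the reference points.

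Next I would identify $V \cap {\cal S}_n$ geometrically. By the affine-independence criterion established above for the sphere, affine independence of the $x_i$ is equivalent to $X = [x_0,\ldots,x_k]$ having rank $k+1$, so $V$ is a linear subspace of $\R^{n+1}$ of dimension exactly $k+1$ through the origin. The intersection of such a $(k+1)$-dimensional linear subspace through $0$ with the unit sphere ${\cal S}_n$ is, by definition, a great subsphere of dimension $k$ (the unit sphere of $V$), and it contains every reference point since $x_i \in V$ with $\|x_i\| = 1$.

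It remains to pass to the closure. By definition $\Aff(X)$ is the closure of $\EBS(X)$ in ${\cal S}_n$, and by the remark accompanying that definition this topological closure coincides with the metric completion. Since $\EBS(X)$ is the great subsphere $V \cap {\cal S}_n$ with only the finite set $\{-x_0,\ldots,-x_k\}$ deleted, the closure recovers the whole great subsphere, yielding $\Aff(X) = V \cap {\cal S}_n = \Span\{x_0,\ldots,x_k\} \cap {\cal S}_n$, which contains the reference points and has dimension $k$, as claimed. The one step deserving genuine care is precisely this passage to the closure: one must check that it fills in exactly the excised antipodes and adds nothing more. This rests on two observations --- that $V \cap {\cal S}_n$ is closed in ${\cal S}_n$ (being the intersection of ${\cal S}_n$ with the closed subspace $V$), so no exterior limit points can appear; and that $\EBS(X)$ is dense in $V \cap {\cal S}_n$, because deleting finitely many points from the connected manifold $V \cap {\cal S}_n$, of dimension $k \geq 1$, leaves a dense subset. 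It is here, through the need for positive dimension, that the hypothesis of at least two reference points ($k \geq 1$) is used.
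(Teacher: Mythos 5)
Your proof is correct and follows essentially the same route as the paper's own derivation in Section 3.4: renormalize the weights via $\tilde\lambda = F(X,x)\lambda$, solve $(\Id - xx\trp)X\tilde\lambda = 0$ to identify $\EBS(X)$ with the great subsphere $\Span\{x_0,\ldots,x_k\}\cap{\cal S}_n$ punctured at the reference points' antipodes, and then pass to the closure. Your explicit density argument for the closure step (including the remark that it requires $k\geq 1$) is a slightly more careful rendering of what the paper dispatches in one line, but it is the same approach rather than a different one.
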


When the reference points are affinely dependent on the sphere, the  matrix $X$ has one or more (say $l$) vanishing singular values. Any weight $\tilde{\lambda} \in \mbox{Ker}(X)$ is a barycentric coordinate vector for any point $x$ of the pointed sphere since the equation $( \Id -  x x\trp) X \tilde \lambda =0$ is  verified. Thus, the EBS is ${\cal S}_n \setminus -X$ and the affine span is the full sphere. If we exclude the abnormal subspace of weights valid for all points, we find that $x$ should be in the  span of the non-zero left singular vectors of $X$, i.e. in the  subsphere of dimension of dimension $rank(X)-1$ generated the Euclidean span of the reference vectors. This can also be achieved by focusing of the locus of points where $Z(x)$ has two vanishing singular values. This more reasonable result suggests adapting the EBS and affine span definitions for singular point configurations.

Two points on a 2-sphere is an interesting example that can be explicitly worked out. When the points are not antipodal, the rank of $X=[x_0,x_1]$ is 2, and the generated affine span is the one-dimensional geodesic joining the two points. When the reference points are antipodal, say north and south poles, X becomes rank one and one easily sees that all points of the 2-sphere are on one geodesic joining the poles with opposite log directions to the poles. This solution  of the EBS definition correspond to the renormalized weight $\tilde \lambda = (1/2 : 1/2) \in Ker(X)$ of the kernel of $X$. However, looking at the locus of points with two vanishing singular values of $Z(x)$ leads to restrict to the north and south poles only, which is a more natural and expected result.

\subsection{Hyperbolic EBS and affine span}
\label{Sec:HyperbolicEBS}

The hyperbolic case closely follows the spherical one.  From Eq.\eqref{eq:MomemtHyperboloid}, we get the expression of the matrix $ Z(x) = ( \Id +  x x\trp J) X F_*(X,x)$. Solving for $Z(x)\lambda=0$ can be done as previously by solving $ ( \Id +  x x\trp J) X \tilde \lambda =0$ with the renormalized weights $\tilde{\lambda} = F_*(X,x) \lambda$. This equation  rewrites $<x| X \tilde{\lambda}>_* x = - X \tilde{\lambda}$, so that the solution has to be of the form $X \tilde \lambda =0$ or $x = \alpha  X \tilde{\lambda}$. When the points are affinely independent, the first form is excluded since $Ker(X)=0$. In order to satisfy the constraint $\|x\|^2_*=-1$ in the second form, we need to have $\alpha^2 = - \|X \tilde{\lambda}\|_*^{-2} >0$ and the first coordinate $[X \tilde{\lambda}]_0$ of $X \tilde{\lambda}$ has to be positive. This defines a cone  in the space of renormalized weights from which each line parametrizes a point $x =  \text{sgn}( [X \tilde{\lambda}]_0) X \tilde \lambda / \sqrt {\tiny -\|X \tilde{\lambda}\|_*^2}$ of the Hyperbolic EBS. Thus, $\Aff(X)$ is the $k$-dimensional hyperboloid generated by the intersection of the Euclidean span of the reference vectors with the hyperboloid $\Hyp^n$. Since it is complete, the completion does not add anything to the affine span:
\begin{equation}
 \Aff(X) = \EBS(X) = \Span\{x_0, \ldots x_k\} \cap \Hyp^n. 
\end{equation}
As for spheres, we see that the hyperbolic affine span  only depend on the reference points through the point of the Grassmanian they define.

\begin{theorem}[Hyperbolic affine span] 
 The affine span $\Aff(X) = \EBS(X)$ of $k+1$ affinely independent reference points $X=[x_0,  \ldots x_k]$ on the $n$-dimensional hyperboloid $\Hyp^n$ endowed with the canonical Minkowski pseudo-metric of the embedding space $\R^{1,n}$ is the hyperboloid of dimension $k$ generated by the intersection of the hyperboloid with the hyperplane containing  the reference points.  \label{thm:HyperbolicSpan}
\end{theorem}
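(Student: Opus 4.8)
The plan is to convert the defining equation of the EBS into linear algebra in the embedding Minkowski space $\R^{1,n}$ and read off the solution set directly, following the spherical argument behind Theorem~\ref{THM7}. I would start from the explicit first moment of Eq.~\eqref{eq:MomemtHyperboloid}, so that $x\in\EBS(X)$ exactly when there is a nonzero projective weight $\lambda\in\Pk$ with $Z(x)\lambda = {\mathfrak M}_1(x,\lambda) = (\Id + x x\trp J)\,X\,F_*(X,x)\,\lambda = 0$. Since $F_*(X,x) = \mbox{Diag}(f_*(\arccosh(-\scal{x_i}{x}_*)))$ has strictly positive diagonal entries ($f_*$ takes values in $]0,1]$, and there is no cut locus to worry about), it is invertible, and the substitution $\tilde{\lambda} = F_*(X,x)\,\lambda$ is a bijection on weight vectors for each fixed $x$. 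Hence the EBS condition is equivalent to $(\Id + x x\trp J)\,X\tilde{\lambda} = 0$ with $\tilde{\lambda}\neq 0$.

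I would then solve this linear equation. Setting $w = X\tilde{\lambda}\in\R^{1,n}$ and using $x\trp J w = \scal{x}{w}_*$, the equation becomes $w + \scal{x}{w}_*\,x = 0$, which forces $w$ to be collinear with $x$: either $X\tilde{\lambda}=0$, or $x = \alpha\,X\tilde{\lambda}$ for some scalar $\alpha$. Because the reference points are affinely independent, $X$ has rank $k+1$ and $\mbox{Ker}(X)=\{0\}$, so the first alternative would force $\tilde{\lambda}=0$ and is excluded; thus every EBS point lies in the span $\Span\{x_0,\ldots x_k\}$. Imposing $\|x\|^2_*=-1$ on $x=\alpha X\tilde{\lambda}$ gives $\alpha^2 = -\|X\tilde{\lambda}\|_*^{-2}$, admissible precisely when $X\tilde{\lambda}$ is timelike, and the sheet constraint $x_0>0$ fixes the sign $\alpha = \mbox{sgn}([X\tilde{\lambda}]_0)/\sqrt{-\|X\tilde{\lambda}\|_*^2}$. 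Conversely, any $x\in\Span\{x_0,\ldots x_k\}\cap\Hyp^n$ may be written $x=X\tilde{\lambda}$, and then $(\Id + x x\trp J)\,X\tilde{\lambda} = x + \scal{x}{x}_*\,x = x - x = 0$, so $x$ lies in the EBS. This proves $\EBS(X) = \Span\{x_0,\ldots x_k\}\cap\Hyp^n$.

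Finally I would identify the intersection geometrically and settle the closure. The $(k+1)$-dimensional linear subspace $\Span\{x_0,\ldots x_k\}$ contains the timelike unit vectors $x_i$, and in a space of signature $(1,n)$ any subspace carrying a timelike direction splits orthogonally into that negative line and a positive-definite complement; hence the Minkowski form restricts to it with signature $(1,k)$, and its intersection with $\|x\|^2_*=-1$ on the upper sheet is a $k$-dimensional hyperboloid, i.e. a totally geodesic $\Hyp^k$ (consistent with geodesics being traces of $2$-planes through the origin). Since $\Hyp^n$ has empty cut locus, $\Mstar=\Hyp^n$ is complete and the EBS is already closed, so $\Aff(X)=\overline{\mbox{EBS}}(X)=\EBS(X)$ adds nothing. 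The whole argument is essentially a transcription of the spherical computation; the only genuinely hyperbolic care-point is the bookkeeping of the two sign conditions — the timelikeness of $X\tilde{\lambda}$ and the selection of the $x_0>0$ sheet — which replaces the pair of antipodal solutions found on the sphere, while the signature claim is the one small geometric fact needed to conclude that the intersection is a full $k$-dimensional hyperboloid rather than something degenerate.
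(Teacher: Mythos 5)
Your proposal is correct and follows essentially the same route as the paper's own argument in Section~\ref{Sec:HyperbolicEBS}: renormalize the weights via the invertible diagonal matrix $F_*(X,x)$, solve the resulting linear equation $(\Id + x x\trp J)X\tilde\lambda = 0$ to force $x = \alpha X\tilde\lambda$ with the timelike and sheet-sign conditions, and invoke completeness (no cut locus) to conclude $\Aff(X)=\EBS(X)$. The only additions are your explicit converse inclusion and the signature-$(1,k)$ argument identifying the intersection as a genuine $k$-dimensional hyperboloid, which are welcome elaborations of facts the paper asserts without detail.
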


When the matrix $X$ has one or more vanishing singular values (affine dependance), all the points of the hyperboloid are solutions corresponding to weights from $Ker(X)$. Excluding these abnormal solutions and looking at the locus of points where $Z(x)$ has two vanishing singular values, we find that $x$ should be in the  span of the non-zero left singular vectors of $X$, i.e. in the  subsphere of dimension of dimension $rank(X)-1$ generated the Euclidean span of the reference vectors.

\section{Fr\'echet / Karcher Barycentric subspaces}
\label{Sec:KBS}

The reformulation of the affine span  as the weighted mean of $k+1$ points also suggests a definition using the Fr\'echet or the Karcher mean, valid in general metric spaces.

\begin{definition}[Fr\'echet / Karcher barycentric subspaces of $k+1$ points] 
Let $(\M, \dist)$ be a metric space of dimension $n$ and $(x_0,\ldots x_k) \in \M^{k+1}$ be $k+1\leq n+1$ distinct reference points. The (normalized) weighted variance at point $x$ with  weight $\lambda \in \Pk$ is: $\sigma^2(x,\lambda) = \frac{1}{2}\sum_{i=0}^k \nlambda_i \dist^2(x, x_i) = \frac{1}{2}\sum_{i=0}^k \lambda_i \dist^2(x, x_i) / (\sum_{j=0}^k \lambda_j).$ The Fr\'echet barycentric subspace of these points  is the locus of weighted Fr\'echet means of these points, i.e. the set of absolute minima of the weighted variance:
\[
\FBS(x_0, \ldots x_k) = \left\{ \arg\min_{x\in \M} \sigma^2(x, \lambda), \:  \lambda \in \Pk \right\}
\]
The Karcher barycentric subspaces $\KBS(x_0, \ldots x_k)$ are defined similarly with local minima instead of global ones.
\end{definition}
 In stratified metric spaces, for instance, the barycentric subspace spanned by points belonging to different strata naturally maps over several strata. This is a significant improvement over geodesic subspaces used in PGA which can only be defined within a regular strata. In the sequel, we only deal with the KBS/FBS of affinely independent points in a Riemannian manifold.

\subsection{Link between the different barycentric subspaces} 

In order to analyze the relationship between the Fr\'echet, Karcher and Exponential barycentric subspaces, we follow the seminal work of \cite{karcher77}. First, the locus of local minima (i.e. Karcher mean) is a superset of the global minima (Fr\'echet mean). On the punctured manifold  $\Mstar$, the  squared distance $d^2_{x_i}(x) = \dist^2(x, x_i)$  is smooth and its gradient is $\nabla d^2_{x_i}(x)  = -2 \log_x(x_i)$. Thus, one recognizes that the EBS equation  $\sum_i \nlambda_i \log_x(x_i) =0$ (Eq.(\ref{eq:Bary})) defines  nothing else than the critical points of the weighted variance: 
\[
FBS\cap {\cal M}^* \subset KBS \cap {\cal M}^* \subset  Aff \cap {\cal M}^* = EBS.
\]
Among the critical points with a non-degenerate Hessian, local minima are characterized by a positive definite Hessian. When the Hessian is degenerate, we cannot conclude on the local minimality without going to higher order differentials. The goal of this section is to subdivide the EBS into a cell complex according to the index of the Hessian operator of the variance: 
\begin{equation}
\textstyle
H(x,\lambda) =  \nabla^2 \sigma^2(x,\lambda)  = - \sum_{i=0}^k \nlambda_i D_x \log_x(x_i).
\label{eq:Hessian}
\end{equation}

Plugging the  value of the Taylor expansion of the differential of the log of Eq.(\ref{eq:Diff_log}), we  obtain the Taylor expansion:
\begin{equation}
\label{eq:TaylorH}
\left[ H(x, \lambda) \right]^a_b 
 =  \delta^a_b - \frac{1}{3} R^a_{cbd}(x) {\mathfrak M}^{cd}_2(x,\nlambda)
- \frac{1}{12} \nabla_c R^a_{dbe}(x) {\mathfrak M}_3^{cde}(x,\nlambda) + O(\varepsilon^4).
\end{equation}
The key factor in this expression is the contraction of the Riemannian curvature with the weighted covariance tensor of the reference points. This contraction is an extension of the Ricci curvature tensor. Exactly as the Ricci curvature tensor encodes  how the volume of an isotropic geodesic ball in the manifold deviates from the volume of the standard ball in a Euclidean space (through its metric trace, the scalar curvature), the extended Ricci curvature encodes how the volume of the geodesic ellipsoid $\lcp{xy}\trp {\mathfrak M}_2(x,\nlambda)\inv \lcp{xy} \leq \varepsilon $  deviates from the  volume of the standard Euclidean ellipsoid.

In locally symmetric affine spaces, the covariant derivative of the curvature is identically zero, which simplifies the formula. In the limit of null curvature, (e.g. for a locally Euclidean space like the torus), the Hessian matrix $H(x, \lambda)$ converges to the unit matrix and never vanishes. In general Riemannian manifolds, Eq.(\ref{eq:TaylorH}) only gives a qualitative behavior but does not provide guaranties as it is a series involving higher order moments of the reference points. In order to obtain hard bounds on the spectrum of $H(x, \lambda)$, one has to investigate bounds on Jacobi fields using Riemannian comparison theorems, as for the proof of uniqueness of the Karcher and Fr\'echet means (see \cite{karcher77,kendall90,Le:2004,Afsari:2010,Yang:2011}).

\begin{definition}[Degenerate, non-degenerate and positive points]
\label{def:NonDegenerate}
An exponential barycenter $x \in \EBS(x_0,\ldots x_k)$ is degenerate (resp. non-degenerate or positive) if the Hessian matrix $H(x,\lambda)$ is singular (resp. definite or positive definite) for all $\lambda$ in the the dual space of barycentric coordinates $\Lambda(x)$. The set of degenerate exponential barycenters is denoted by $EBS^0(x_0,\ldots,x_k)$ (resp. non-degenerate by $EBS^*(x_0,\ldots,x_k)$  and positive by $EBS^+(x_0,\ldots x_k)$). 
\end{definition}

 The definition of non-degenerate and positive points could be generalized to non-critical points (outside the affine span) by considering for instance the right singular space of the smallest singular value of $Z(x)$. However, this would depend on the metric on the space of weights and a renormalization of the weights (such as for spheres) can change the smallest non-zero singular value. Positive points are obviously non-degenerate. In Euclidean spaces, all the points of an affine span are positive and non-degenerate.  In positively curved manifolds, we may have degenerate points and non-positive points, as we will see with the sphere example. For negatively curved spaces, the intuition that points of the EBS should all be positive like in Euclidean spaces is also wrong, as we sill see with the example of hyperbolic spaces.

\begin{theorem}[Karcher barycentric subspace and positive span] $   $\\
\label{THM2}
 $EBS^+(x_0, \ldots x_k)$ is the set of  non-degenerate points of the Karcher barycentric subspace $\KBS(x_0, \ldots x_k)$ on $\Mstar$.  In other words, the KBS is the positive EBS plus potentially some degenerate points of the affine span and some points of the cut locus of the reference points. 
\end{theorem}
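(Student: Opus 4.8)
The plan is to prove the set equality by a double inclusion between $EBS^+(x_0,\ldots x_k)$ and the non-degenerate points of $\KBS(x_0,\ldots x_k)$ lying in $\Mstar$, combining the classical second-order characterization of local minima with a connectedness argument over the admissible weights. Throughout, I would work on $\Mstar$, where $\sigma^2(\cdot,\lambda)$ is smooth, its gradient is $-{\mathfrak M}_1(x,\nlambda)$, and its Hessian is the $H(x,\lambda)$ of \eqref{eq:Hessian}.

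First I would dispatch the inclusion $EBS^+(x_0,\ldots x_k) \subseteq \KBS(x_0,\ldots x_k)$. For $x \in EBS^+$ the point is an exponential barycenter, so ${\mathfrak M}_1(x,\lambda)=0$ for every $\lambda\in\Lambda(x)$, which means the gradient $-{\mathfrak M}_1(x,\nlambda)$ vanishes and $x$ is a critical point of $\sigma^2(\cdot,\lambda)$. Fixing any $\lambda\in\Lambda(x)$, positivity means $H(x,\lambda)$ is positive definite, so $x$ is a strict local minimum of $\sigma^2(\cdot,\lambda)$ and therefore belongs to the KBS; since a positive definite matrix is definite, $x$ is moreover a non-degenerate point.

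For the converse, I would take a non-degenerate point $x\in\Mstar$ of the KBS and produce $\lambda_0\in\Pk$ realizing $x$ as a local minimum of $\sigma^2(\cdot,\lambda_0)$. Stationarity gives ${\mathfrak M}_1(x,\lambda_0)=0$, placing $x$ in the EBS with $\lambda_0\in\Lambda(x)$; the second-order necessary condition gives that $H(x,\lambda_0)$ is positive semi-definite, while non-degeneracy of $x$ gives that it is definite, so $H(x,\lambda_0)$ is in fact positive definite. The remaining task, and the crux of the argument, is to propagate positive definiteness from the single weight $\lambda_0$ to all of $\Lambda(x)$, which is precisely what $x\in EBS^+$ requires.

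The hard part will be exactly this propagation, and I would resolve it by exploiting the structure of $H(x,\cdot)$. By \eqref{eq:Hessian}, $H(x,\lambda)=-\sum_i\nlambda_i D_x\log_x(x_i)$ is linear in the normalized weights $\nlambda$, and as $\lambda$ runs over the projective subspace $\Lambda(x)$ these normalized weights run bijectively over the affine slice $\{w\in\R^{k+1}:\one\trp w=1\}$ inside the linear span of $\Lambda(x)$, which is connected. Thus $\lambda\mapsto H(x,\lambda)$ is a continuous map of a connected set into the symmetric matrices whose image, by non-degeneracy, lies in the open set of definite matrices; this set is the disjoint union of the two open cones of positive definite and of negative definite matrices. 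A connected set cannot meet both components, and since $H(x,\lambda_0)$ is positive definite the whole image lies in the positive definite cone. Hence $H(x,\lambda)$ is positive definite for every $\lambda\in\Lambda(x)$, i.e. $x\in EBS^+(x_0,\ldots x_k)$, which closes the inclusion. The final "in other words" reformulation then follows formally: a point of the KBS either lies in $\Mstar$, where it is an exponential barycenter captured by $EBS^+$ unless its Hessian degenerates for an admissible weight, or it lies on the cut locus of the reference points, where the EBS is undefined.
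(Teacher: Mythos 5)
Your proof is correct, and its skeleton is exactly the one the paper relies on: on $\Mstar$ the EBS equation is the first-order condition for the weighted variance, and the second-order conditions (positive definite Hessian at a critical point implies strict local minimum; local minimum implies positive semi-definite Hessian) do the rest. Note that the paper actually supplies no formal proof of this theorem at all --- it is stated as an immediate consequence of the two sentences preceding Eq.~(\ref{eq:Hessian}) about critical points and their Hessians. Where you go beyond the paper is the propagation step: Definition~\ref{def:NonDegenerate} quantifies over \emph{all} $\lambda \in \Lambda(x)$, whereas local minimality of the variance only hands you positive semi-definiteness of $H(x,\lambda_0)$ for the \emph{one} weight $\lambda_0$ realizing the minimum, so the inclusion of non-degenerate KBS points into $EBS^+$ genuinely needs an argument when $\dim \Lambda(x) > 0$. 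Your resolution --- the normalized weights of $\Lambda(x)$ form an affine (hence connected) slice of $\ker Z(x)$, $\lambda \mapsto H(x,\lambda)$ is affine in the normalized weights, and a connected family of definite symmetric matrices cannot jump between the positive and negative definite cones --- is sound and closes a gap the paper silently passes over. The one caveat is interpretive rather than mathematical: your reading of ``definite'' as ``positive or negative definite'' matches the paper's wording, and your connectedness argument would even survive the weaker reading ``non-singular,'' since signature is locally constant on invertible symmetric matrices; either way the conclusion stands.
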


\subsection{Spherical KBS}
\label{Sec:SphericalKBS}

In order to find the positive points of the EBS on the sphere, we compute the Hessian of the normalized variance. Using Eq.(\ref{eq:HessDistSphere2})  and $u_i= { \log_x(x_i) }/{\theta_i}$, we obtain the Hessian of $\sigma^2(x,\lambda) = \frac{1}{2}\sum_{i=0}^k \nlambda_i \dist^2(x, x_i)$: 
\[\textstyle
H(x, \lambda)  =   \big(\sum_i \nlambda_i  \theta_i  \cot\theta_i \big)(\Id -xx\trp) 
+ \sum_i \nlambda_i  ( 1- \theta_i  \cot\theta_i)u_i u_i\trp.
\]
As expected,  $x$ is an eigenvector with eigenvalue $0$ due to the projection on the tangent space at $x$. Any vector $w$ of the tangent space at $x$ (thus orthogonal to $x$) which is orthogonal to the affine span (and thus to the vectors $u_i$)  is an eigenvector with eigenvalue $ \sum_i \nlambda_i \theta_i \cot \theta_i $. Since the Euclidean affine span $\Aff_{\R^{n+1}}(X)$ has $rank(X) \leq k+1$ dimensions, this eigenvalue has multiplicity $n+1-rank(X) \geq n - k$ when $x\in \Aff(X)$. The last $Rank(X)-1$ eigenvalues have associated eigenvectors within $\Aff_{\R^{n+1}}(X)$.

\cite{buss_spherical_2001} have  have shown that this Hessian matrix is positive definite for {\em positive weights} when the points are within one hemisphere with at least one non-zero weight point which is not on the equator. In contrast, we are interested here in the positivity and definiteness of the Hessian $H(x,\lambda)$ for the positive and negative weights which  live in dual space of barycentric coordinates $\Lambda(x)$. This is actually a non trivial algebraic geometry problem.  Simulation tests with random reference points $X$   show that the eigenvalues of $H(x, \nlambda(x))$  can be positive or negative at different  points of the EBS. The number of positive eigenvalues (the index) of the Hessian is illustrated on Fig. (\ref{fig:signature}) for a few configuration of 3 affinely independent reference points on the 2-sphere. This illustrates the subdivision of the EBS on spheres in a cell complex based on the index of the critical point: the positive points of the KBS do not in general cover the full subsphere containing the reference points. It may even be disconnected, contrarily to the affine span which consistently covers the whole subsphere.  For subspace definition purposes, this suggests that the affine span might thus be the most interesting definition.  For affinely dependent points, the KBS/FBS behave similarly to the EBS. For instance, the weighted variance of  $X=[e_1,-e_1]$ on a 2-sphere is a function of the latitude only. The points of a parallel at any specific latitude  are global minima of the weighted variance for a choice of $\lambda =(\alpha : 1-\alpha), \: \alpha \in [0,1]$. Thus,  all points of the sphere belong to the KBS, which is also the FBS and the affine span. However, the Hessian matrix has one positive eigenvalue along meridians and one zero eigenvalue along the parallels. This is a very non-generic case.  

\begin{figure}
\includegraphics[height=3.1cm]{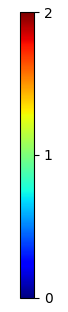}
\includegraphics[height=3.1cm]{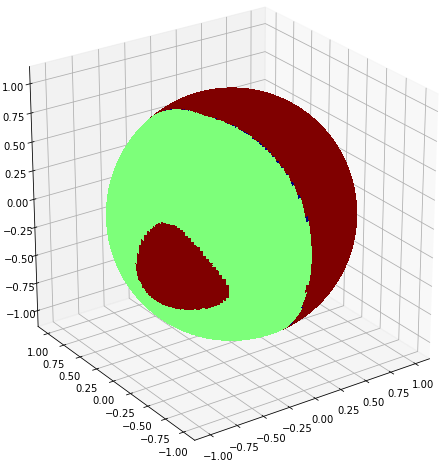}
\includegraphics[height=3.1cm]{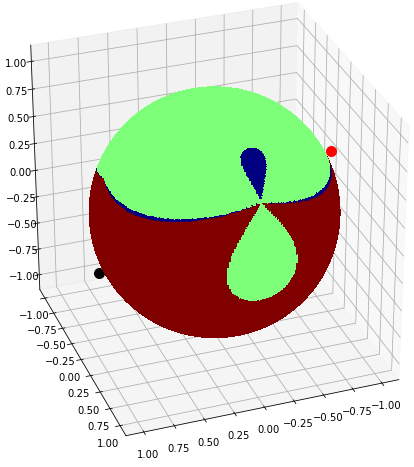}
\includegraphics[height=3.1cm]{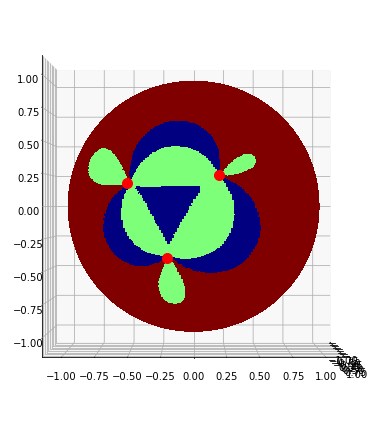}
\includegraphics[height=3.26cm]{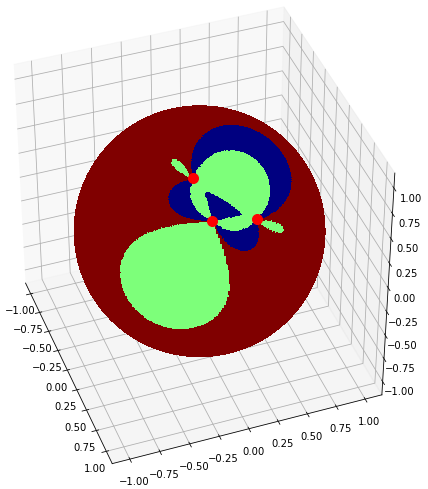}
\caption{Signature of the weighted Hessian matrix for different configurations of 3 reference points (in black, antipodal point in red) on the 2-sphere: the locus of local minima (KBS) in brown does not cover the whole sphere and can even be disconnected (first example).}
\label{fig:signature}
\end{figure}

\subsection{Hyperbolic KBS / FBS}
\label{Sec:HyperbolicKBS}

Let $x =  X \tilde \lambda$ be a point of the hyperbolic affine span of $X=[x_0,\ldots x_k]$. The renormalized weights $\tilde \lambda$ are related to the original weights through $\lambda = F_*(X,x)^{-1} \tilde \lambda$ and satisfy $\|X \tilde{\lambda}\|_*^2 = -1$ and $\text{sgn}( [X \tilde{\lambda}]_0) >0$. The point $x$ is a critical point of the (normalized) weighted variance. In order to know if this is a local minimum (i.e. a point of the KBS), we compute the Hessian of this weighted variance. Denoting $ u_i = \log_x(x_i) / \theta_i$ with  $\cosh \theta_i = -\scal{x}{x_i}_*$, and using the Hessian of the square distance derived in Eq.\eqref{eq:MomemtHyperboloid}, we obtain the following formula:
\[\textstyle
 H(x,\lambda) =  \sum_i \nlambda_i \theta_i \coth \theta_i (J + J x x\trp J) +
  \sum_i \nlambda_i {(1 - \theta_i \coth \theta_i)} J u_i u_i\trp J. 
\]

As expected,  $x$ is an eigenvector with eigenvalue 0 due to the projection on the tangent space at $x$. Any vector $w$ of the tangent space at $x$ which is orthogonal to the affine span (and thus to the vectors $u_i$)  is an eigenvector with eigenvalue $\sum_i \nlambda_i \theta_i \coth \theta_i = 1/(\one\trp \tilde \lambda)$ with multiplicity $n+1-rank(X)$. The last $Rank(X)-1$ eigenvalues have associated eigenvectors within $\Aff_{\R^{n+1}}(X)$. Simulation tests with random reference points $X$ show these eigenvalues  can be positive or negative at different  points of $Aff(X)$. The index of the Hessian is illustrated on Fig. (\ref{fig:signatureHyp}) for a few configuration of 3 affinely independent reference points on the 2-hyperbolic space. Contrarily to the sphere, we observe only one or two positive eigenvalues corresponding respectively to saddle points and local minima. This subdivision of the hyperbolic affine span in a cell complex shows that the hyperbolic KBS is in general a strict subset of the hyperbolic affine span.  We conjecture that there is an exception for reference points at infinity, for which the barycentric subspaces could be generalized using Busemann functions \citep{busemann_geometry_1955}: it is likely that the FBS, KBS and the affine span are all equal in this case  and cover the whole lower dimensional hyperbola.

\begin{figure}
\includegraphics[height=3cm]{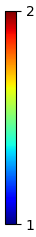}
\includegraphics[height=3cm]{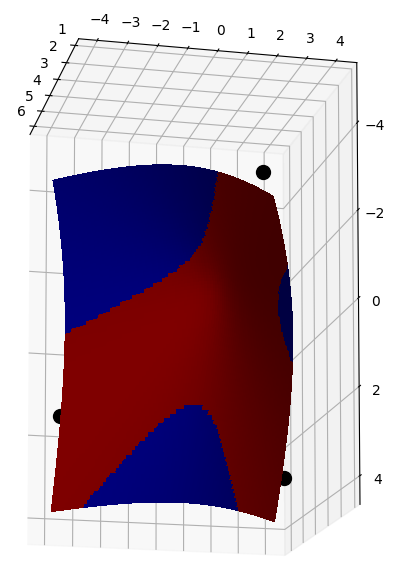}
\includegraphics[height=3cm]{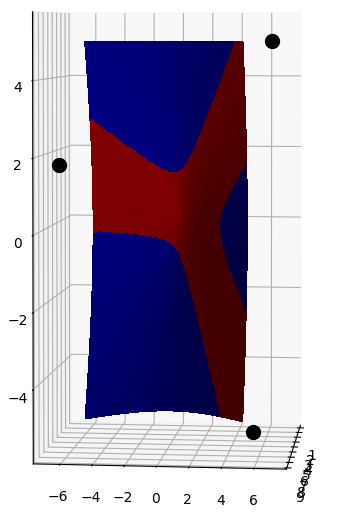}
\includegraphics[height=3cm]{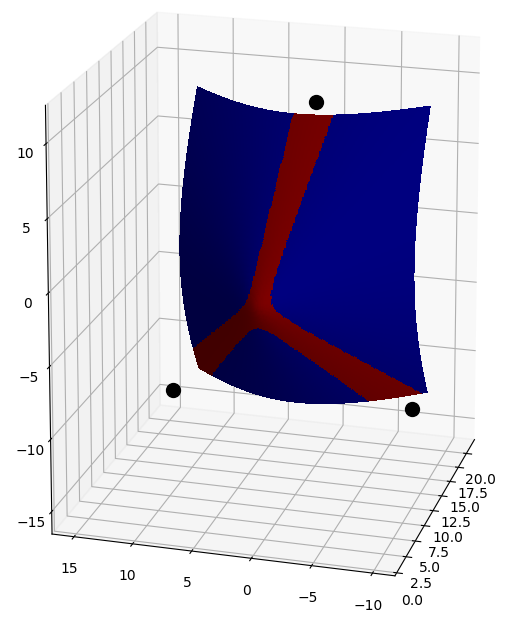}
\includegraphics[height=3cm]{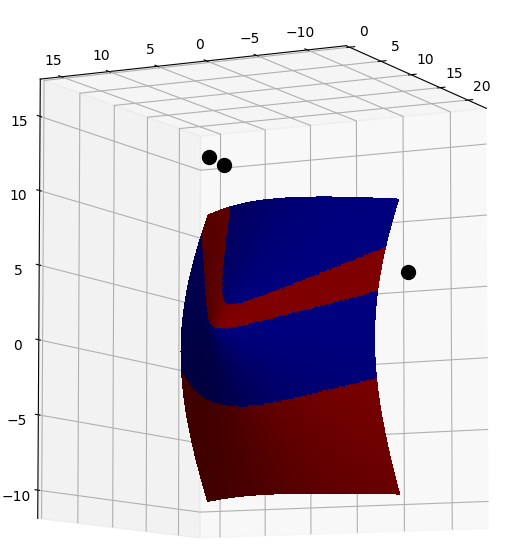}
\includegraphics[height=3cm]{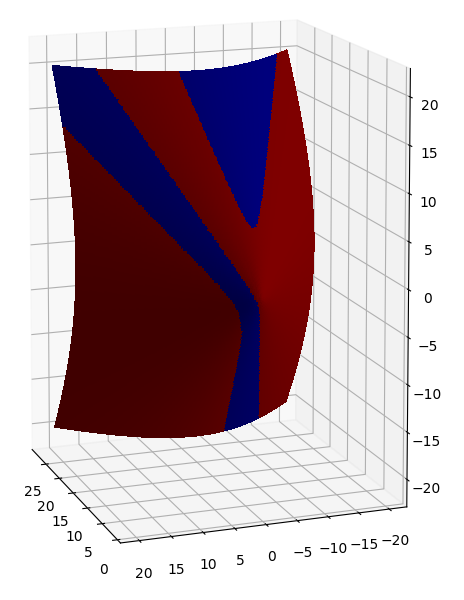}
\caption{Signature of the weighted Hessian matrix for different configurations of 3 reference points on the 2-hyperboloid: the locus of local minima (KBS) in brown does not cover the whole hyperboloid and can be disconnected (last two example).}
\label{fig:signatureHyp}
\end{figure}

\section{Properties of the barycentric subspaces} 
\label{Sec:Prop}

The EBS exists at each reference point $x_i$ with weight 1 for this point and zero for the others. Moreover, when the points are affinely independent, the matrix $Z(x_i)$ has exactly one zero singular value since column $i$ is $\log_{x_i}(x_i)=0$ and all the other column vectors are affinely independent. Finally, the weighted Hessian matrix boils down to $H(x_i,\lambda) =  - \left. D_x \log_{x}(x_i)\right|_{x=x_i} = \Id$ (See e.g. Eq.(\ref{eq:Diff_log})). Thus the reference points are actually local minima of the weighted variance and  the KBS exists by continuity in their neighborhood. 

\subsection{Barycentric simplex in a regular geodesic ball}
We call   the subset of the FBS that has non-negative weights a barycentric simplex. It contains all the reference points, the geodesics segments between the reference points, and of course the Fr\'echet mean of the reference points. This is the generalization of a geodesic segment for 2 points, a triangle for 3 points, etc. The $(k-l)$-faces of a $k$-simplex are the simplices defined by the barycentric subspace of $k-l+1$ points among the $k+1$. They are obtained by imposing the $l$ remaining barycentric coordinates to be zero. In parallel to this paper,  \cite{weyenberg_statistics_2015} has investigated  barycentric simplexes as extensions of principal subspaces in the negatively curved metric spaces of trees under the name Locus of Fr\'echet mean (LFM), with very interesting results.

\begin{theorem}[Barycentric simplex in a regular geodesic ball] \label{THM3}
Let $\kappa$ be an upper bound of sectional curvatures of $\M$ and $\text{inj}(\M)$ be the radius of injection (which can be infinite) of the Riemannian manifold. Let  $X= \{ x_0,\ldots x_k\} \in  \M^{(k+1)}$ be a set of $k+1\leq n$ affinely independent points included in a regular geodesic ball $B(x,\rho)$ with $\rho < \frac{1}{2}\min\{ \text{inj}(\M), \frac{1}{2}\pi/\sqrt{\kappa} \} $ ($\pi/\sqrt{\kappa}$ being infinite if $\kappa < 0$). The barycentric simplex is the graph of a $k$-dimensional differentiable function from the non-negative quadrant of homogeneous coordinates $(\Pk)^+$ to $B(x,\rho)$ and is thus at most $k$-dimensional. The $(k-l)$-faces of the simplex are the simplices defined by the barycentric subspace of $k-l+1$ points among the $k+1$ and include the reference points themselves as vertices and the geodesics joining them as edges.
\end{theorem}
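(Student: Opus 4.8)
The plan is to realize the barycentric simplex as the image of a single differentiable map $\Phi\colon (\Pk)^+ \to B(x,\rho)$ sending each non-negative weight to the corresponding weighted Fr\'echet mean, and then to read off its dimension and face structure directly from $\Phi$.

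\emph{Step 1 (unique minimizer in the interior).} First I would fix $\lambda \in (\Pk)^+$ and show that the weighted variance $\sigma^2(\cdot,\lambda)=\tfrac12\sum_i \nlambda_i \dist^2(\cdot,x_i)$ is strictly geodesically convex on $B(x,\rho)$ with a minimizer in the open ball. The radius bound gives $2\rho < \tfrac12\,\pi/\sqrt{\kappa}$, so any two points of $B(x,\rho)$ are at distance $<\tfrac12\,\pi/\sqrt\kappa$, which is below the threshold at which $\dist^2(\cdot,x_i)$ loses strict convexity; together with $\rho<\tfrac12\,\mathrm{inj}(\M)$ this is the classical regularity hypothesis ensuring that $B(x,\rho)$ is geodesically convex and that the weighted mean exists, is unique, and stays inside the ball \citep{karcher77,Afsari:2010}. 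A convex combination with non-negative, not-all-zero coefficients of strictly convex functions is strictly convex, so $\Phi(\lambda)=\arg\min_{B(x,\rho)}\sigma^2(\cdot,\lambda)$ is well defined; by strict convexity it is also the unique critical point, i.e. the unique solution of ${\mathfrak M}_1(\cdot,\lambda)=0$ in the ball, so within $B(x,\rho)$ the point $\Phi(\lambda)$ is simultaneously the Fr\'echet mean, the Karcher mean and the unique exponential barycenter.

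\emph{Step 2 (differentiability and dimension).} Next I would differentiate $\Phi$ using the implicit function theorem applied to the critical equation ${\mathfrak M}_1(\Phi(\lambda),\lambda)=0$. The derivative of ${\mathfrak M}_1$ with respect to the footpoint is $-H(x,\lambda)$, the Hessian of the variance from Eq.(\ref{eq:Hessian}); at a strict minimum of a strictly convex function this Hessian is positive definite, hence invertible. The implicit function theorem therefore yields a local differentiable solution $x=x(\lambda)$, and the uniqueness from Step 1 identifies it with $\Phi$, so $\Phi$ is differentiable on the interior of $(\Pk)^+$. Since $(\Pk)^+$ is a $k$-dimensional projective simplex and the barycentric simplex is exactly the graph (equivalently the image) of $\Phi$, it is at most $k$-dimensional.

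\emph{Step 3 (faces, vertices, edges).} Finally, imposing $\nlambda_i=0$ on $l$ of the coordinates simply deletes those points from $\sigma^2$, so $\Phi$ restricted to that face of $(\Pk)^+$ is the analogous construction for the remaining $k-l+1$ points; this is the corresponding $(k-l)$-face. At a vertex the variance reduces to $\tfrac12\dist^2(\cdot,x_i)$, whose unique minimizer in the ball is $x_i$, so vertices map to the reference points. On an edge only two weights $\nlambda_i,\nlambda_j$ are nonzero, and the minimizer of $\nlambda_i\dist^2(\cdot,x_i)+\nlambda_j\dist^2(\cdot,x_j)$ lies on the unique minimizing geodesic joining $x_i$ and $x_j$; hence edges are geodesic segments. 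The hard part is Step 1: proving strict convexity and the interior-minimum property uniformly over the ball. This rests on comparison-theorem (Jacobi field) control of the Hessian of $\dist^2(\cdot,x_i)$ and is precisely where the explicit radius bound is used; I would import it from the Karcher/Afsari uniqueness theory rather than re-derive it, after which the implicit function step, the dimension count, and the face analysis are routine.
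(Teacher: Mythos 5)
Your proposal follows essentially the same route as the paper's proof: Karcher-style strict convexity of the weighted variance on the regular geodesic ball giving a unique interior minimizer, the implicit function theorem applied to ${\mathfrak M}_1(x,\lambda)=0$ for differentiability of $\lambda \mapsto x_\lambda$, and a dimension count showing the graph is at most $k$-dimensional. Your dimension argument (the domain $(\Pk)^+$ is $k$-dimensional, so the image of a differentiable map is at most $k$-dimensional) is a harmless variant of the paper's rank computation ($D_\lambda x_\lambda = H(x_\lambda,\lambda)\inv Z(x_\lambda)$ has rank at most $k$ because $Z(x_\lambda)\lambda = 0$), and your Step 3 spells out the face/vertex/edge structure that the paper's proof leaves implicit.

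One justification in your Step 2, however, is not valid as written: you invert the Hessian at the minimizer on the grounds that ``at a strict minimum of a strictly convex function this Hessian is positive definite.'' That implication is false in general --- $t \mapsto t^4$ is strictly convex with vanishing Hessian at its strict minimum --- so strict convexity alone does not license the implicit function theorem. The paper's argument, which is the correct repair, is that each individual Hessian $H_i(x) = \nabla^2 d^2_{x_i}(x)$ is positive definite at every point of the regular ball (this is exactly what the Jacobi-field comparison bounds under the radius condition give), and therefore $H(x,\lambda) = \sum_i \nlambda_i H_i(x)$, being a convex combination of positive definite matrices for $\lambda \in (\Pk)^+$, is positive definite everywhere on the ball, not merely at $x_\lambda$. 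Since the Karcher/Afsari material you already import in Step 1 establishes precisely these pointwise Hessian bounds (that is how the strict convexity is proved there in the first place), the fix costs you nothing --- but the chain of reasoning should run through the comparison bounds on each $H_i$, not through an appeal to strict convexity of the sum.
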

\begin{proof}
The proof closely follows the one of \cite{karcher77} for the uniqueness of the Riemannian barycenter. The main argument is that 
$\mu_{(X, \lambda)}(x) = \sum \nlambda_i \delta_{x_i}(x)$ is a probability distribution whose support is included in the strongly convex geodesic ball $B(x,\rho)$. The variance $\sigma^2(x, \lambda) = \frac{1}{2}\sum_i \nlambda_i d^2(x, x_i)$ is strictly convex on that ball and has  a unique minimum $x_{\lambda} \in B(x,\rho)$, necessarily the weighted Fr\'echet mean.  This proof of the uniqueness of the weighted Fr\'echet mean with non-negative weights was actually already present in \cite{buser_gromovs_1981}. We supplement the proof here by noting that since the Hessian  $H(x_{\lambda}, \lambda) = \sum_i \nlambda_i H_i(x_{\lambda})$ is the convex combination of positive matrices, it is positive definite for all $\lambda \in (\Pk)^+$ in the positive quadrant. Thus the function $x_{\lambda}$ is differentiable thanks to the implicit function theorem: $ D_{\lambda} x_{\lambda}  = H( x_{\lambda}, \lambda)\inv Z(x_{\lambda}).$ The rank of this derivative is at most $k$ since $Z(x_{\lambda})=0$, which proves that the graph of the function $x_{\lambda}$ describes at most a $k$ dimensional subset in $\M$. 
\end{proof}

\subsection{Barycentric simplexes and convex hulls}
In a vector space, a point lies in the convex hull of a simplex if and only if its barycentric coordinates are all non-negative (thus between 0 and 1 with the unit sum constraint). Consequently, barycentric coordinates are often thought to be related to convex hulls. However, in a general Riemannian manifold, the situation is quite different. When there are closed geodesics, the convex hull can reveal several disconnected components, unless one restrict to convex subsets of the manifolds as shown by \cite{Groisser:2003}. In metric spaces with negative curvature (CAT spaces), \cite{weyenberg_statistics_2015} displays explicit examples of convex hulls of 3 points which are 3-dimensional rather than 2-dimensional as expected. In fact, the relationship between barycentric simplexes and convex hulls cannot hold in general Riemannian manifolds if the barycentric simplex is not totally geodesic at each point, which happens for constant curvature spaces but not for general Riemannian manifolds.

\subsection{Local dimension of the barycentric subspaces}

 Let $x$ be a point of the $EBS$ with affinely independent reference points. The EBS equation $Z(x)\lambda = 0$ for $\lambda \in \Lambda(x)$ is smooth in $x$ and $\lambda$ so that we can take a Taylor expansion: at the first order, a variation of barycentric coordinates $\delta \lambda$  induces a variation of position $\delta x$ which are linked through  $H(x,\lambda) \delta x - Z(x) \delta \lambda =0.$ Thus, at regular points:
\[
\delta x = H(x,\lambda)\inv Z(x) \delta \lambda.
\]

Let $Z(x)=U(x)S(x)V(x)\trp$ be a singular value decomposition with singular values sorted in decreasing order. Since $x$ belongs to the EBS, there is at least one (say $m \geq 1$) singular value that vanish and the dual space of barycentric coordinates is  $\Lambda(x) = \Span(v_{k-m}, \ldots v_k)$. For a variation of weights $\delta \lambda$ in this subspace, there is no change of coordinates, while any variation of weights in $\Span(v_0, \ldots v_{k-m-1})$ induces a non-zero position variation.  Thus,  the tangent space of the EBS restricts to the $(k-m)$-dimensional linear space generated by $\{ \delta x_i' =  H(x,\lambda)\inv u_i\}_{0\leq i\leq k-m}$.  Here, we see that the Hessian matrix $H(x, \lambda)$ encodes the distortion of the orthonormal frame fields $ u_1(x), \ldots u_k(x)$ to match the tangent space. Since the lower dimensional subspaces are included one the larger ones, we have a stratification of our $k$-dimensional submanifold into $k-1$, $k-2, \ldots 0$-dimensional subsets. 

\begin{theorem}[Dimension of the exponential barycentric subspace at non-degenerate points] 
\label{THM4}
The non-degenerate exponential barycentric subspace $EBS^*(x_0,\ldots,x_k)$ of $k+1$ affinely independent points is a stratified 
space of dimension $k$ on  $\Mstar$. On the $m$-dimensional strata, $Z(x)$ has exactly $k-m+1$ vanishing singular values.
\end{theorem}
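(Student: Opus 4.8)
The plan is to realize $EBS^*(x_0,\ldots,x_k)$ as the image, under projection to the position factor, of an incidence variety over $\Mstar\times\Pk$, and to control both the smoothness of that variety and the rank of the projection by means of the invertibility of $H(x,\lambda)$, which holds by definition at non-degenerate points. First I would set $\Phi(x,\lambda) = {\mathfrak M}_1(x,\lambda) = Z(x)\lambda$, read in a normal coordinate chart as an $\R^n$-valued map on the $(n+k)$-dimensional space $\Mstar\times\Pk$, and consider the incidence variety $\tilde S = \Phi\inv(0)$. Differentiating, and using $\sum_i \lambda_i D_x\lcp{xx_i} = -(\one\trp\lambda)\, H(x,\lambda)$ together with the linearity of ${\mathfrak M}_1$ in $\lambda$, the differential splits as $D\Phi(\delta x,\delta\lambda) = -(\one\trp\lambda)\,H(x,\lambda)\,\delta x + Z(x)\,\delta\lambda$. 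At a point of $EBS^*$ the matrix $H(x,\lambda)$ is invertible for every $\lambda\in\Lambda(x)$, so the partial differential $D_x\Phi = -(\one\trp\lambda)\,H(x,\lambda)$ is already an isomorphism of $T_x\M$; hence $\Phi$ is a submersion and $\tilde S$ is a smooth submanifold of dimension $(n+k)-n = k$ near each such $(x,\lambda)$. This is the source of the global dimension $k$.

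Next I would analyze the projection $\pi:\tilde S\to\Mstar$, $(x,\lambda)\mapsto x$, whose image is the EBS. On $T_{(x,\lambda)}\tilde S$ the constraint forces $\delta x = (\one\trp\lambda)\inv H(x,\lambda)\inv Z(x)\,\delta\lambda$, so the image of $d\pi$ is $H(x,\lambda)\inv(\mathrm{Im}\,Z(x))$ and its kernel is the set of $\delta\lambda$ with $Z(x)\delta\lambda = 0$, i.e. the projective tangent space to $\Lambda(x) = \ker Z(x)$. Writing $v = \dim\ker Z(x)$ for the number of vanishing singular values of $Z(x)$, the image of $d\pi$ has dimension $\mathrm{rank}\,Z(x) = (k+1)-v$ while the kernel has dimension $v-1$; these add to $k$, reconfirming $\dim\tilde S = k$. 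Setting $m = (k+1)-v$, equivalently $v = k-m+1$, gives exactly the claimed correspondence between the stratum dimension and the number of vanishing singular values.

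To assemble the stratification I would use that the rank of $Z(x)$ is lower semicontinuous, so $\{\mathrm{rank}\,Z\le r\}$ is closed and the level sets $\Sigma_m = \{x\in EBS^*: \dim\ker Z(x) = k-m+1\}$ are locally closed, with the generic stratum $\Sigma_k$ (one vanishing singular value) open and dense. On each $\Sigma_m$ the projection $\pi$ has constant rank $m$, so the constant rank theorem identifies its image with an $m$-dimensional submanifold, while the closure relations $\overline{\Sigma_m}\subset\bigcup_{m'\le m}\Sigma_{m'}$ furnish the frontier condition, exhibiting $EBS^*$ as a stratified space of dimension $k$.

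I expect the main obstacle to be the regime $v = \dim\ker Z(x) > 1$: there $\pi$ is genuinely non-injective, a $(v-1)$-dimensional projective family of admissible weights $\Lambda(x)$ sitting over each EBS point, and one must check that the constant-rank hypothesis really holds along the stratum and that the various image subspaces $H(x,\lambda)\inv(\mathrm{Im}\,Z(x))$ obtained from different $\lambda\in\Lambda(x)$ do not inflate the stratum beyond dimension $m = (k+1)-v$. The uniformity that rescues the argument is precisely the non-degeneracy defining $EBS^*$: since $\mathrm{Im}\,Z(x)$ is independent of $\lambda$ and $H(x,\lambda)\inv$ is an isomorphism for \emph{every} $\lambda\in\Lambda(x)$, the value $\mathrm{rank}(d\pi) = (k+1)-v$ is $\lambda$-independent, so the stratum dimension is pinned down regardless of which weight is selected.
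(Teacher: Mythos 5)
Your proposal is correct and is essentially the paper's own argument: the heart of both is the joint linearization $H(x,\lambda)\,\delta x = Z(x)\,\delta \lambda$ of the barycentric equation, inverted thanks to the non-degeneracy of $H(x,\lambda)$, followed by the singular-value count for $Z(x)$ giving image dimension $m=(k+1)-v$ and kernel dimension $v-1$. Your incidence-variety packaging (smoothness of $\tilde S$ of dimension $k$ via the submersion theorem, then rank analysis of the projection $\pi$) formalizes what the paper does implicitly with its first-order Taylor expansion, and the constant-rank caveat you flag on the strata with $v>1$ is present, unacknowledged, in the paper's proof as well, which simply asserts the stratification from the inclusion of the lower-dimensional subspaces into the larger ones.
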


At degenerate points, $H(x, \lambda)$ is not invertible and vectors living in its kernel are also authorized, which potentially raises the dimensionality of the tangent space, even if they do not change the barycentric coordinates. These pathologies do not appear in practice for the constant curvature spaces as we have seen with spherical and hyperbolic spaces, and we conjecture that this is also not the case for symmetric spaces.

\subsection{Stability of the affine span with respect to the metric power}

The Fr\'echet (resp. Karcher) mean can be further generalized by taking a power $p$ of the metric  to define the $p$-variance $\sigma^{p}(x) = \frac{1}{p} \sum_{i=0}^k \dist^{p}(x, x_i)$.  The global (resp. local) minima of this $p$-variance defines the median for $p =1$. This suggest to further generalize barycentric subspaces by taking the locus of the  minima of the weighted $p$-variance $\sigma^{p}(x,\lambda) = \frac{1}{p} \sum_{i=0}^k \nlambda_i \dist^{p}(x, x_i)$. In fact, it turns out that all these "$p$-subspaces"  are necessarily included in the affine span, which shows this notion is really central. To see that, we compute the gradient of the $p$-variance at non-reference point of $\Mstar$:
\[\textstyle
\nabla_x \sigma^{p}(x,\lambda)  = - \sum_{i=0}^k \nlambda_i  \dist^{p -2}(x, x_i) \log_{x}(x_i).
\]
Critical points of the $p$-variance satisfy the equation  $\sum_{i=0}^k \lambda'_i  \log_{x}(x_i) =0$ for the new weights $\lambda'_i = \lambda_i \dist^{p -2}(x, x_i) $. Thus, they are still elements of the EBS and changing the power of the metric just amounts to a reparametrization of the barycentric weights.

\subsection{Restricted geodesic submanifolds are limit of affine spans}

We investigate in this section what is happening when all the points $\{x_i = \exp_{x_0}(\varepsilon  w_i)\}_{1\leq i\leq k}$ are converging to $x_0$ at first order along $k$ independent vectors $\{ w_i\}_{1\leq i\leq k}$.  Here, we fix $w_0 =0$ to simplify the derivations, but the proof can be easily extended with a suitable change of coordinates provided that $\sum_{i=0}^k w_i =0$.  In Euclidean spaces, a point of the affine span $y = \sum_{i=0}^k \nlambda_i x_i$ may be written as the point $y = x + \varepsilon  \sum_{i=1}^k  \nlambda_i w_i$ of the "geodesic subspace" generated by the family of vectors $\{ w_i\}_{1\leq i\leq k}$. By analogy,  we expect the exponential barycentric subspace $\EBS(x_0, \exp_{x_0}(\varepsilon  w_1) \ldots \exp_{x_0}(\varepsilon  w_k))$  to converge towards the totally geodesic subspace at $x$ generated by the $k$ independent vectors $w_1, \ldots w_k$ of $T_x\M$:
\[ \textstyle
GS(x, w_1, \ldots w_k) = \left\{ \textstyle \exp_{x}\left( \sum_{i=1}^k \alpha_i  w_i  \right) \in \M \text{ for } \alpha \in \R^k \right\}.
\] 

In fact, the above definition of the geodesic subspaces (which is the one implicitly used in most of the works using PGA) is too large and may not define a $k$-dimensional submanifold when there is a cut-locus. For instance, it is well known that geodesics of a flat torus are either periodic or everywhere dense in a flat torus submanifold depending on whether the components of the initial velocity field have rational or irrational ratios. This means that the geodesic space generated by a single vector for which all ratio of coordinates are irrational (e.g. $w=(\pi, \pi^2,\ldots \pi^k)$) is filling the full $k$-dimensional flat torus. Thus all the 1-dimensional geodesic subspaces that have irrational ratio of all coordinates minimize the distance to any set of data points in a flat torus of any dimension. In order to have a more meaningful definition and to guaranty the dimensionality of the geodesic subspace, we need to restrict the definition to the points of the geodesics that are distance minimizing.

\begin{definition}[Restricted Geodesic Submanifolds] \label{def:RGS}
Let $x \in \M$ be a point of a Riemannian manifold and let $W_x = \{ \sum_{i=1}^k \alpha_i  w_i,  \alpha \in \R^k\}$ be the $k$-dimensional linear subspace of $T_x\M$ generated  a $k$-tuple $\{ w_i\}_{1\leq i\leq k} \in (T_x\M)^k$ of independent tangent vectors  at $x$. We consider the geodesics starting at $x$ with tangent vectors in $W_x$, but up to the first cut-point of $x$ only. This generates a submanifold of $\M$ called the restricted geodesic submanifold $GS^*(W_x)$:
\[ \textstyle
GS^*(W_x)  = GS^*(x, w_1, \ldots w_k) = \{ \exp_{x}\left( w \right), w\in W_x \cap D(x) \},
\]
where $D(x) \subset T_x\M$ is the injectivity domain. 
\end{definition}
It may not be immediately clear that the subspace we define that way is a submanifold of $\M$: since $\exp_x$ is a diffeomorphism from $D(x) \subset T_x\M$ to $\M \setminus \C(x)$ whose differential has full rank, its restriction to the open star-shape subset $ W_x \cap D(x)$ of dimension $k$ is a diffeomorphism from that subset to the restricted geodesic subspace $GS^*(W_x)$ which is thus an open submanifolds of dimension $k$ of $\M$. This submanifold is generally not geodesically complete.

\begin{theorem}[Restricted geodesic subspaces are limit of affine spans] 
\label{THM6}
The restricted geodesic submanifold $GS^*(W_{x_0}) = \{ \exp_{x_0}\left( w \right), w\in W_{x_0} \cap D(x_0) \}$ is the limit of the $EBS(x_0, x_1(\varepsilon), \ldots x_k(\varepsilon))$  when the points $x_i(\varepsilon) = \exp_{x_0}(\varepsilon  w_i)$ are converging to $x_0$ at first order in $\varepsilon$ along the tangent vectors $w_i$ defining the $k$-dimensional subspace $W_{x_0} \subset T_{x_0}\M$. These limit points are parametrized by barycentric coordinates at infinity in the codimension 1 subspace $\mathds{1}^{\perp}$, the projective completion of $\Pk$ in $\R P^k$, see Definition \ref{def:Pk}.
\end{theorem}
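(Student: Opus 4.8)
The plan is to insert the explicit defining equation of the EBS into a Taylor expansion in $\varepsilon$ and read off the limiting equation. With $x_0$ fixed (and $w_0=0$) and $x_i(\varepsilon)=\exp_{x_0}(\varepsilon w_i)$, smoothness of $\log_x$ on $\Mstar$ gives, for each fixed $x\neq x_0$ away from the relevant cut loci,
\[
\log_x(x_i(\varepsilon)) = \log_x(x_0) + \varepsilon\, J_i(x) + O(\varepsilon^2),
\qquad
J_i(x):=\left.D_y\log_x(y)\right|_{y=x_0}[w_i],
\]
so that the barycentric equation ${\mathfrak M}_1(x,\lambda)=\sum_{i=0}^k\lambda_i\log_x(x_i(\varepsilon))=0$ becomes
\[
\Big(\textstyle\sum_{i=0}^k \lambda_i\Big)\,\log_x(x_0) \;+\; \varepsilon\sum_{i=1}^k\lambda_i\,J_i(x)\;+\;O(\varepsilon^2)=0.
\]
Because a limit point $x$ is distinct from $x_0$, we have $\log_x(x_0)\neq 0$, which forces the total mass $\one^{\top}\lambda=\sum_i\lambda_i$ to vanish to first order in $\varepsilon$: this is precisely the escape of the weights to the projective hyperplane $\one^{\perp}$ announced in the statement. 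I would therefore rescale by keeping $\beta_i:=\lambda_i$ ($1\le i\le k$) of order one and setting $\gamma:=\varepsilon^{-1}\,\one^{\top}\lambda$, divide by $\varepsilon$, and pass to the limit to obtain the limiting equation
\[
\gamma\,\log_x(x_0)+\sum_{i=1}^k\beta_i\,J_i(x)=0.
\]

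The core of the proof is to identify the solution locus of this limiting equation with $GS^*(W_{x_0})$. On a regular geodesic ball there are no conjugate points, so $L:=\left.D_y\log_x(y)\right|_{y=x_0}=\big(D\exp_x|_{\log_x(x_0)}\big)^{-1}$ is invertible and $J_i(x)=L[w_i]$. Applying $L^{-1}=D\exp_x|_{\log_x(x_0)}$ reduces the equation to $\gamma\,L^{-1}\log_x(x_0)+\sum_{i}\beta_i w_i=0$. The key elementary identity is that $L^{-1}\log_x(x_0)$ is the endpoint velocity $\dot\sigma(1)$ of the radial geodesic $\sigma(s)=\exp_x(s\log_x(x_0))$ joining $x$ to $x_0$, and reversing this geodesic gives $\dot\sigma(1)=-\log_{x_0}(x)$. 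Hence the limiting equation is equivalent to $\sum_i\beta_i w_i=\gamma\,\log_{x_0}(x)$. If $\gamma=0$ the independence of the $w_i$ forces $\beta=0$, so every nontrivial solution has $\gamma\neq 0$; normalizing $\gamma=1$, the equation holds if and only if $\log_{x_0}(x)=\sum_i\beta_i w_i\in W_{x_0}$, that is, if and only if $x=\exp_{x_0}(w)$ with $w\in W_{x_0}\cap D(x_0)$. This is exactly membership in $GS^*(W_{x_0})$, and it simultaneously exhibits the limiting barycentric coordinates $(-\sum_i\beta_i:\beta_1:\cdots:\beta_k)\in\one^{\perp}$, the points at infinity of $\Pk$.

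It then remains to upgrade this fibrewise computation to the asserted convergence of sets. For the inclusion $GS^*(W_{x_0})\subseteq\lim_{\varepsilon\to 0}\EBS(x_0,x_1(\varepsilon),\ldots,x_k(\varepsilon))$ I would fix $x\in GS^*(W_{x_0})\cap\Mstar$ and, having a solution of the limiting equation there, correct it to an exact solution $x(\varepsilon)\to x$ of ${\mathfrak M}_1(\cdot,\cdot)=0$ by the implicit function theorem applied to the rescaled system, using the nondegeneracy supplied by the invertibility of $L$; equivalently one may track the zero level set of the smallest singular value of $Z(x)$ (Theorem \ref{THM1}) or of $\det\Omega(x)$ (Theorem \ref{THM5}) under the perturbation. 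For the reverse inclusion, any accumulation point of EBS points as $\varepsilon\to 0$ satisfies the limiting equation by passing to the limit in the rescaled identity, hence lies in $GS^*(W_{x_0})$.

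I expect the main obstacle to be precisely this passage to the limit: one must control the $O(\varepsilon^2)$ remainder uniformly on a neighborhood while the weights themselves degenerate towards $\one^{\perp}$, and keep the rescaled mass $\gamma$ bounded and bounded away from $0$ so that the projective limit is well defined. The invertibility of $L=\left.D_y\log_x(y)\right|_{y=x_0}$, valid only away from conjugate points, is what legitimises the reduction to the condition $\log_{x_0}(x)\in W_{x_0}$; this is exactly where the regular geodesic ball hypothesis is used, and near conjugate points the identification would fail. A final bookkeeping point is to phrase the set convergence precisely (as a pointwise or Kuratowski limit) and to respect the cut-locus restriction $w\in D(x_0)$ that distinguishes the restricted submanifold $GS^*(W_{x_0})$ from the full, possibly space-filling, geodesic subspace.
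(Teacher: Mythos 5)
Your proposal is correct and follows essentially the same route as the paper's Appendix~A proof: Taylor-expand $\log_x(x_i(\varepsilon))$ in $\varepsilon$, invoke the geodesic-reversal identity $\log_x(x_0) = -\left. D\log_x\right|_{x_0}\log_{x_0}(x)$ together with the invertibility of $\left. D\log_x\right|_{x_0}$ on a domain avoiding cut/conjugate points, and conclude that the limiting solutions are exactly the points $\exp_{x_0}(w)$, $w \in W_{x_0}\cap D(x_0)$, with weights degenerating to the hyperplane at infinity $\one^{\perp}$. The differences are bookkeeping rather than substance — you keep the weights bounded and let the mass $\one^{\top}\lambda$ shrink like $\varepsilon$, whereas the paper normalizes the mass and lets the normalized weights blow up like $\alpha_i/\varepsilon$ (projectively the same limit) — except for your implicit-function-theorem step, which is a genuine strengthening: the paper stops at points satisfying the rescaled equation up to $O(\varepsilon^2)$, while your IFT argument (legitimate, since with fixed weights $(\gamma,\beta)$ and $\gamma \neq 0$ the limiting equation has an isolated, nondegenerate zero) would upgrade these to exact EBS points converging to each point of $GS^*(W_{x_0})$.
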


The proof is deferred to Appendix A because of its technicality. We conjecture that the construction can  be generalized using techniques from sub-Riemannian geometry to higher order derivatives when the first order derivative do not span a $k$-dimensional subspace. This would mean that we could also see some non-geodesic decomposition schemes as limit cases of barycentric subspaces, such as splines on manifolds \cite{crouch_dynamic_1995,machado_higher-order_2010,Gay-Balmaz:2012:10.1007/s00220-011-1313-y}.

\paragraph{Example on spheres and hyperbolic spaces}

In spheres (resp. hyperbolic spaces), the restricted geodesic subspace $GS^*(W_{x})$ describes a great subsphere  (resp. a great hyperbola), except for the cut-locus of the base-point $x$ in spheres. Thus, points of $GS^*(W_{x})$ are also points of the affine span generated by $k+1$ affinely independent reference points of this subspace. When all the reference points $x_i = \exp_{x}(\varepsilon w_i)$ coalesce to a single point $x$ along the tangent vectors $W = [w_0,\ldots w_k]$ (with $W \one =0$), we find that solutions of the EBS equation are of the form  $y =  x + W ( \varepsilon \tilde \lambda / \one\trp \tilde \lambda) + O(\varepsilon^2)$, which describes the affine hyperplane generated by $x$ and $W$ in the embedding Euclidean (resp. Minkowski) space. The weights $\mu = \varepsilon \tilde \lambda / \one\trp \tilde \lambda$ converge to points at infinity ($\one\trp \mu =0$) of the affine k-plane of normalized weights.

When reference points coalesce with an additional second order acceleration orthogonally to the subspace $W_x$, we conjecture that the affine span is not any more a great subspheres but a smaller one. This would include principal nested spheres (PNS) developed by \cite{jung_generalized_2010,jung_analysis_2012} as a limit case of barycentric subspaces.  It would be interesting to derive a similar procedure for hyperbolic spaces and to determine which types of subspaces could be obtained by such limits for more general non-local and  higher order jets.

\section{Barycentric subspace analysis}
\label{Sec:BSA}

PCA can be viewed as the search for a sequence of nested linear spaces that best approximate the data at each level. In a Euclidean space, minimizing the variance of the residuals boils down to an independent optimization of orthogonal subspaces at each level of approximation, thanks to the Pythagorean theorem. This enables building each subspace of the sequence by adding (resp. subtracting) the optimal one-dimensional subspace iteratively in a forward (resp. backward) analysis. Of course, this property does not scale up to manifolds, for which  the orthogonality of subspaces is not even well defined.

\subsection{Flags of barycentric subspaces in manifolds}
\cite{damon_backwards_2013} have argued that the nestedness of approximation spaces is one of the most important characteristics for generalizing PCA to more general spaces. Barycentric subspaces can easily be nested, for instance by adding or removing one or several points at a time, to obtains a family of embedded submanifolds which generalizes flags of vector spaces. 

A flag of a vector space $V$  is a filtration of subspaces (an increasing sequence of subspaces, where each subspace is a proper subspace of the next): $\{0\} = V_0 \subset V_1 \subset V_2 \subset \cdots \subset V_k = V$. Denoting  $d_i = \dim(V_i)$ the dimension of the subspaces, we have $0 = d_0 < d_1 < d_2 < \cdots < d_k = n$, where n is the dimension of V. Hence, we must have $k \leq n$. A flag is {\em complete} if $d_i = i$, otherwise it is a {\em partial flag}. Notice that a linear subspace $W$ of $V$ is identified to the partial flag $ \{0\}  \subset W \subset V$. A flag can be generated by adding the successive eigenspaces of an SPD matrix with increasing eigenvalues. If all the eigenvalues have multiplicity one, the generated flag is complete and one can parametrize it by the ordered set of eigenvectors. If an eigenvalue has a larger multiplicity, then the corresponding eigenvectors might be considered as exchangeable in this parametrization in the sense that we should only consider the subspace generated by all the eigenvectors of that eigenvalue.

In an $n$-dimensional manifold $\M$, a strict ordering of $n+1$ independent points $x_0\prec x_1 \ldots  \prec x_n$ defines a filtration of barycentric subspaces. For instance: $\EBS(x_0) = \{ x_0 \}  \subset  \cdots  \EBS(x_0, x_1, x_k) \cdots \subset \EBS(x_0, \ldots x_n).$ The 0-dimensional subspace is now a points in $\M$ instead of the null vector in flags of vector spaces because we are in an affine setting. Grouping points together in the addition/removal process generates a partial flag of barycentric subspaces. Among the barycentric subspaces, the affine span  seems to be the most interesting definition. Indeed, when the manifold $\Mstar$ is connected, the EBS of $n+1$ affinely independent  points covers the full manifold $\Mstar$, and its completion covers the original manifold: ${\Aff}(x_0,\ldots x_n) = \M$. With the Fr\'echet or Karcher barycentric subspaces,  we only generate a submanifold (the positive span) that does not cover the whole manifold in general, even in negatively curved spaces.

\begin{definition}[Flags of affine spans in manifolds] 
Let  $x_0\preceq x_1 \ldots  \preceq x_k$ be $k+1 \leq n+1$ affinely independent ordered  points of $\M$ where two or more successive points are either strictly ordered ($x_i \prec x_{i+1}$) or exchangeable ($x_i \sim x_{i+1}$). For a strictly ordered set of points, we call  the sequence of properly nested subspaces $FL_i(x_0\prec x_1 \ldots  \prec x_k) = {\Aff}(x_0, \ldots x_i)$ for $0 \leq i \leq k$ the flag of affine spans $FL(x_0\prec x_1 \ldots  \prec x_k)$. For a flag comprising exchangeable points, the different subspaces of the sequence are only generated at strict ordering signs or at the end. A flag is said complete if it is strictly ordered with $k=n$. We call  a flag  of exchangeable points $FL(x_0\sim x_1 \ldots  \sim x_k)$ a pure subspace because the sequence is reduced to the unique subspace $FL_k(x_0\sim x_1 \ldots \sim x_k) = {\Aff}(x_0, \ldots x_k)$.
\end{definition}

\subsection{Forward and backward barycentric subspaces analysis}

In Euclidean PCA, the flag of linear subspaces can be built in a forward way, by computing the best 0-th order approximation (the mean), then the best first order approximation (the first mode), etc. It can also be built backward, by removing the direction with the minimal residual  from the current affine subspace. In a manifold, we can use similar forward and backward analysis, but they have no reason to give the same result.

With a forward analysis, we compute iteratively the flag of affine spans by adding one point at a time keeping the previous ones fixed. The barycentric subspace $\Aff(x_0) = \{ x_0 \}$ minimizing the unexplained variance is a Karcher mean. Adding a second point amounts to compute the geodesic passing through the mean that best approximate the data. Adding a third point now differ from PGA, unless the three points coalesce to a single one. With this procedure, the Fr\'echet mean always belong to the barycentric subspace. 

The backward analysis consists in iteratively removing one dimension. One should theoretically  start with a full set of points  and chose which one to remove. However, as  all the sets of $n+1$ affinely independent points generate the full manifold with the affine span, the optimization really begin with the set of $n$ points $x_0, \ldots x_{n-1}$. We should afterward only test for which of the $n$ points we should remove. Since optimization is particularly inefficient in large dimensional spaces, we may run a forward analysis until we reach the noise level of the data for a dimension $k \ll n$. In practice, the noise level is often unknown and a threshold at 5\% of the data variance is sometimes chosen. More elaborate methods exist to determine the intrinsic dimension of the data for manifold learning technique \citep{wang_scale-based_2008}. Point positions may be optimized at each step to find the optimal subspace and a backward sweep reorders the points at the end. With this process, there is no reason for the Fr\'echet mean to belong to any of the barycentric subspaces. For instance, if we have clusters, one expects the reference points to localize within these clusters rather than at the Fr\'echet mean.

\subsection{Approximating data using a pure subspace}

Let ${Y} = \{ \hat y_i \}_{i=1}^N \in \M^N$ be $N$ data points  and  $X=\{x_0,\ldots x_k\}$ be $k+1$ affinely independent reference points. We assume that each data point $\hat y_i$ has almost surely  one unique closest point $y_i(X)$ on the barycentric subspace. This is the situation for Euclidean, hyperbolic and spherical spaces, and this should hold more generally for all the points outside the focal set of the barycentric subspace. This allows us to write the residual $r_i(X) = \dist( \hat y_i,y_i(X))$ and to consider the minimization of the unexplained variance $\sigma^2_{out}(X) = \sum_j r_i^2(X)$. This optimization problem on $\M^{k+1}$ can be achieved by standard techniques of optimization on manifolds (see e.g. \cite{OptimizationManifold:2008}). However, it is not obvious that the canonical product Riemannian metric is the right metric to use, especially close to coincident points. In this case, one would like to consider switching to the space of (non-local) jets to guaranty the numerical stability of the solution. In practice, though, we may constraint the distance between reference points to be larger than a threshold.

A second potential problem is the lack of identifiability: the minimum of the unexplained variance may be reached by subspaces parametrized by several k-tuples of points. This is the case for constant curvature spaces since every linearly independent $k$-tuple of points in a given subspace parametrizes the same barycentric subspace. In constant curvature spaces, this can be accounted for using a suitable polar or QR matrix factorization (see e.g. \ref{suppB}). In general manifolds, we expect that the absence of symmetries will break the multiplicity of this relationship (at least locally) thanks to the curvature. However, it can lead to very badly conditioned systems to solve from a numerical point of view for small curvatures.
 
A last problem is that the criterion we use here (the unexplained variance) is only valid for a pure subspace of fixed dimension, and considering a different dimension will lead in general to pure subspaces which cannot be described by a common subset of reference points. Thus, the forward and backward optimization of nested barycentric subspaces cannot lead to the simultaneous optimality of all the subspaces of a flag in general manifolds.

\subsection{A criterion for hierarchies of subspaces: AUV on flags of affine spans}

In order to obtain consistency across dimensions, it is necessary to define a criterion which depends on the whole flag of subspaces and not on each of the subspaces independently. In PCA, one often plots the unexplained variance as a function of the number of modes used to approximate the data. This curve should decreases as fast as possible from the variance of the data (for 0 modes) to 0 (for $n$ modes). A standard way to quantify the decrease consists in summing the values at all steps, giving the Accumulated Unexplained Variances (AUV), which is analogous to the Area-Under-the-Curve (AUC) in Receiver Operating Characteristic (ROC) curves.

Given a strictly ordered flag of affine subspaces $Fl(x_0\prec x_1 \ldots  \prec x_k)$, we thus propose to optimize the AUV criterion:
\[ \textstyle
AUV(Fl(x_0\prec x_1 \ldots  \prec x_k)) = \sum_{i=0}^k \sigma^2_{out}( Fl_i(x_0\prec x_1 \ldots  \prec x_k ) )
\]
instead of the unexplained variance at order $k$. We could of course consider a complete flag but in practice it is often useful to stop at a dimension $k$  much smaller than the possibly very high dimension $n$. The criterion is extended to more general partial flags by weighting the unexplained variance of each subspace  by the number of (exchangeable) points that are added at each step. With this global criterion, the point $x_i$ influences all the subspaces of the flag that are larger than $Fl_i(x_0\prec x_1 \ldots  \prec x_k )$ but not the smaller subspaces. It turns out that optimizing this criterion results in the usual PCA up to mode $k$ in a Euclidean space.

\begin{theorem}[Euclidean PCA as an optimization in the flag space] \label{THM8}
Let ${\hat Y} = \{ \hat y_i \}_{i=1}^N$ be a set of $N$ data points in $\R^n$. We denote as usual the mean by $\bar y = \frac{1}{N} \sum_{i=1}^N  \hat y_i$ and the empirical covariance matrix by $\Sigma = \frac{1}{N} \sum_{i=1}^N  (\hat y_i -\bar y) (\hat y_i -\bar y)\trp$. Its spectral decomposition is denoted by $\Sigma = \sum_{j=1}^n \sigma_j^2 u_j u_j\trp$ with the eigenvalues sorted in decreasing order. We assume that the first $k+1$ eigenvalues have multiplicity one, so that the order from $\sigma_1$ to $\sigma_{k+1}$ is strict.  

Then the partial flag of affine subspaces $Fl(x_0\prec x_1 \ldots  \prec x_k)$  optimizing 
\[ \textstyle
AUV(Fl(x_0\prec x_1 \ldots  \prec x_k)) = \sum_{i=0}^k \sigma^2_{out}( Fl_i(x_0\prec x_1 \ldots  \prec x_k ) )
\]
is strictly ordered  and can be parametrized by $x_0 = \bar y$, $x_i = x_0 + u_i$ for $1 \leq i \leq k$. The parametrization by points is not unique but the flag of subspaces which is generated is and is equal to the flag generated by the PCA  modes up to mode $k$ included.
\end{theorem}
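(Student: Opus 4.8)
The plan is to treat the AUV as a sum of independent fixed-dimension PCA problems and to show that their individual optima are simultaneously realizable by a single nested family of subspaces, so that the coupled optimization over a flag decouples.

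First I would reduce each summand to a classical PCA computation. Writing an affine span as $\Aff(x_0,\ldots,x_i) = c_i + W_i$ with $W_i$ a linear subspace of dimension $\dim W_i \le i$ (it is the affine hull of $i+1$ points), the residual to this span is $\sum_m \| P_{W_i^\perp}(\hat y_m - c_i)\|^2$. Optimizing over the offset $c_i$ forces the span through the barycenter $\bar y$, after which $\sigma^2_{out}(\Aff(x_0,\ldots,x_i)) = N\,(\tr \Sigma - \tr(P_{W_i}\Sigma))$. By the Ky Fan maximum principle (Courant--Fischer), $\tr(P_{W_i}\Sigma) \le \sum_{j=1}^{i}\sigma_j^2$ for any subspace of dimension at most $i$, so each summand is bounded below by $a_i := N\sum_{j=i+1}^n \sigma_j^2$, giving the termwise lower bound $AUV \ge \sum_{i=0}^k a_i$. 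Because $a_i$ is strictly decreasing (using $\sigma_i > \sigma_{i+1}$ for $i \le k$), equality in the $i$-th term forces $W_i$ to have dimension exactly $i$ and to equal $\Span(u_1,\ldots,u_i)$, the span of the top $i$ modes, and this equality subspace is unique.

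The crucial observation is that these individually optimal subspaces are nested, $\{\bar y\} \subset \bar y + \Span(u_1) \subset \cdots \subset \bar y + \Span(u_1,\ldots,u_k)$, so the single point configuration $x_0 = \bar y$, $x_i = \bar y + u_i$ realizes the optimal subspace in every term at once; hence $\sum_{i=0}^k a_i$ is attained and the lower bound is the true minimum. I would then read off uniqueness: at any minimizer every summand must be tight, so each $\Aff(x_0,\ldots,x_i)$ must coincide with the unique optimal $i$-dimensional PCA subspace, which pins down the flag and forces the spans to have strictly increasing dimensions $0,1,\ldots,k$ --- that is, the optimizer is necessarily strictly ordered (a collapse to a lower-dimensional, exchangeable configuration would put a larger $a_d \ge a_i$ into some term and strictly increase the AUV). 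The parametrization by points is not unique, since any $x_0,\ldots,x_k$ with $\Aff(x_0,\ldots,x_i) = \bar y + \Span(u_1,\ldots,u_i)$ for all $i$ generate the same flag.

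I expect the only genuinely delicate point to be this decoupling step: the lower bound $\sum_i a_i$ is a sum of constraints that are individually achievable but a priori incompatible with the nesting requirement built into a flag, and the whole theorem rests on the fact --- special to Euclidean PCA via the spectral theorem --- that the sequence of variance-maximizing subspaces is automatically a flag. Everything else (the centering reduction, the Ky Fan bound, and the uniqueness bookkeeping) is routine linear algebra; the strict spectral gap hypothesis is used precisely to make each equality case unique and to guarantee the optimal spans do not degenerate.
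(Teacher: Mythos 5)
Your proof is correct, but it takes a genuinely different route from the paper's own proof (Supplement B). You bound the AUV \emph{termwise}: after centering, each unexplained variance satisfies $\sigma^2_{out}(Fl_i) \ge a_i = N\sum_{j>i}\sigma_j^2$ by the Ky Fan/Courant--Fischer principle, and the decisive observation is that these individually optimal subspaces are nested, so the sum of the lower bounds is actually attained by one flag; the strict gap $\sigma_i>\sigma_{i+1}$ for $i\le k$ makes each equality case unique, which yields both the identification of the minimizer with the PCA flag and the strict ordering of any minimizer (an affinely dependent prefix would force some term strictly above $a_i$). The paper proceeds quite differently: it parametrizes a strictly ordered flag of independent points via a Gram--Schmidt/QR factorization $X = x_0\mathds{1}^{\top} + QT$, exploits the stability of this factorization under adding or removing points to write the AUV, after setting $x_0=\bar y$, as the explicit weighted trace $\sum_{i=1}^{k} i\, q_i^{\top}\Sigma q_i + (k+1)\sum_{i>k} q_i^{\top}\Sigma q_i$, then shows by a Lagrangian argument that every critical point has $q_i = u_{\pi(i)}$ for some permutation $\pi$, and finally identifies the identity permutation as the global minimum through a pairwise swap comparison of the weights $1,2,\ldots,k,k+1$. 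Your argument is shorter, avoids the critical-point and permutation combinatorics, and makes the role of the multiplicity-one hypothesis completely transparent through the equality cases; what the paper's route buys is the QR parametrization of flags by point configurations (reused for the identifiability discussion and the manifold setting) and a description of the full critical-point structure of the AUV on the flag manifold (all permutations of eigenvectors), which is relevant for the numerical optimization envisioned in BSA. One caveat applies to both arguments equally: the optimization is effectively carried out over strictly ordered flags, since the paper's weighting convention for partial flags with exchangeable points is stated too ambiguously for either proof to genuinely rule them out; your parenthetical collapse argument does so only under the reading in which every prefix $\Aff(x_0,\ldots,x_i)$ contributes its own term.
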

The proof is detailed in \ref{suppB}. The main idea is to parametrize the matrix of reference vectors by the product of an orthogonal  matrix $Q$ with a positive definite triangular superior matrix (QR decomposition). The key property of this Gram-Schmidt orthogonalization is the stability of the columns of $Q$ when we add or remove columns (i.e reference points) in $X$, which allows to write the expression of the AUV explicitly. Critical points are found for columns of $Q$ which are eigenvectors of the data covariance matrix and the expression of the AUV shows that we have to select them in the decreasing order of eigenvalues.

\subsection{Sample-limited barycentric subspace inference on spheres}

In several domains, it has been proposed to limit the inference of the Fr\'echet mean to the data-points only. In neuroimaging studies, for instance, the individual image minimizing the sum of square deformation distance to other subject images has been argued to be a good alternative to the mean template (a Fr\'echet mean in deformation and intensity space) because it conserves the full definition and all the original characteristics of a real subject image \citep{lepore:inria-00616172}. Beyond the Fr\'echet mean, \cite{Feragen2013} proposed to define the first principal component mode as the geodesic going through two of the data points which minimizes the unexplained variance. The method named {\em set statistics} was aiming to accelerate the computation of statistics on tree spaces. \cite{Zhai_2016} further explored this idea under the name of {\em sample-limited geodesics} in the context of PCA in phylogenetic tree space. However, in both cases, extending the method to higher order principal modes was considered as a challenging research topic.

With barycentric subspaces, sample-limited statistics naturally extends to any dimension by restricting the search to (flags of) affine spans that are parametrized by data points. Moreover, the implementation boils down to a very simple enumeration problem. 
An important advantage for interpreting the modes of variation is that reference points are never interpolated as they are by definition sampled from the data. Thus,  we may go back to additional information about the samples like the disease characteristics in medical image image analysis.  The main drawback is the combinatorial explosion of the computational complexity: the optimal order-k flag of affine spans requires $O(N^{k+1})$ operations, where $N$ is the number of data points. In practice, the search can be done exhaustively for a small number of reference points but an approximated optimum has to be sought for larger $k$ using a limited number of random tuples \citep{Feragen2013}.

\begin{figure}[!tb]
\includegraphics[width=0.30\columnwidth]{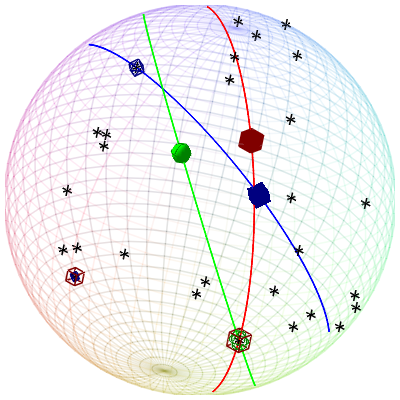}\:
\includegraphics[width=0.37\columnwidth]{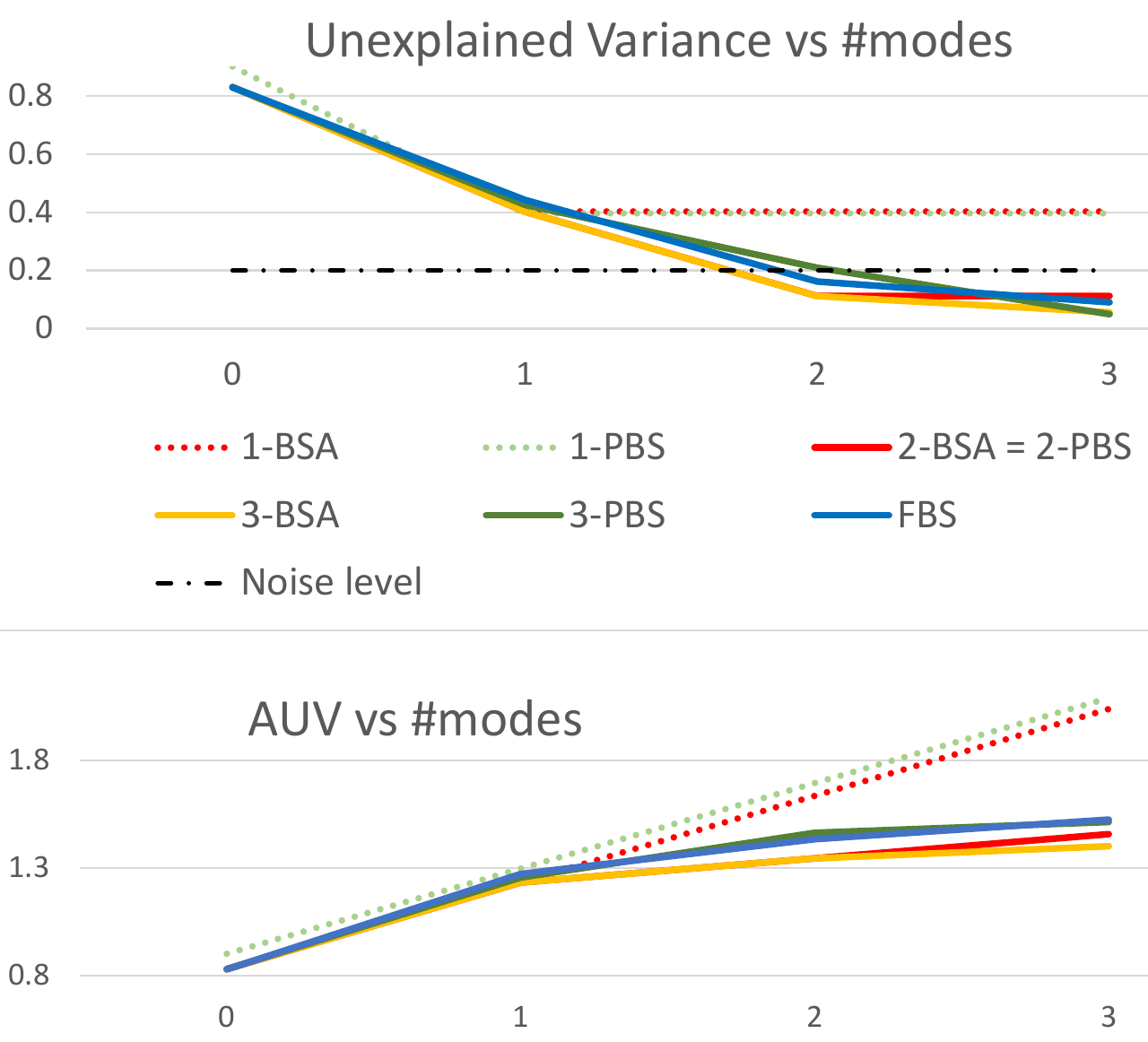}\:
\includegraphics[width=0.30\columnwidth]{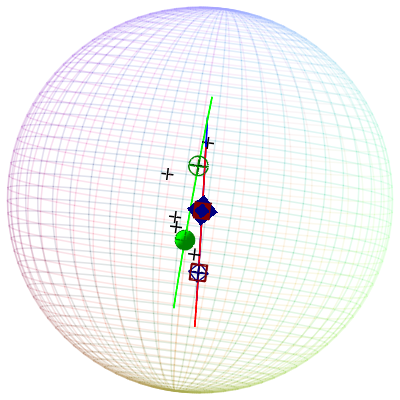}
\caption{{\bf Left:} Equi 30 simulated dataset. Data and reference points are projected from the 5-sphere to the expected 2-sphere in 3d to allow visualization. For each method (FBS in blue, 1-PBS in green and 1-BSA in red), the first reference point has a solid symbol. The 1d mode is the geodesic joining this point to the second reference point. The third reference point of FBS and 2-BSA (on the lower left part) is smaller. {\bf Middle:} graph of the unexplained variance and AUV for the different methods on the Equi 30 dataset. {\bf Right:} Mount Tom Dinosaur trackway 1 data with the same color code. 1-BSA (in red) and FBS (in blue) are superimposed.}
\label{Fig:Equi30}
\end{figure}

In this section, we consider the exhaustive sample-limited version of the Forward Barycentric Subspace (FBS) decomposition,  the optimal $k$-dimensional Pure Barycentric Subspace with backward ordering (k-PBS), and the Barycentric Subspace Analysis up to order k (k-BSA). In order to illustrate the differences, we consider a  first synthetic dataset where we draw 30 random points uniformly on an equilateral triangle of side length $\pi/2$ on a 6-dimensional sphere. We add to each point a (wrapped) Gaussian noise of standard deviation $\sigma = 10^{\circ}$. In this example, original data live on a 2-sphere: the ideal flag of subspaces is a pure 2d subspace spanning the first three coordinates. We illustrate in Fig.\ref{Fig:Equi30} the different reference points that are found for the different methods. We can see that all methods end-up with different results, contrarily to the Euclidean case. The second observation is that the optimal pure subspace is not stable with the dimension: the reference points of the 0-PBS (the sample-limited Fr\'echet mean represented by the large blue solid diamond), the 1-PBS (in green) and the 2-PBS (identical to the red points of the 2-BSA in red)  are all different.  BSA is more stable:  the first reference points are the same from the 1-BSA to the 3-BSA. In terms of unexplained variance, the 2-BSA is the best for two modes (since it is identical to the optimal 2-PBS) and reaches the actual noise level. It remains better than the 3-PBS and the FBS with three modes in terms of AUV even without adding a fourth point. 

As a second example, we take real data encoding the shape of three successive footprints of Mount Tom Dinosaur trackway 1 described in \citep[p.181]{small96}. For planar triangles, the shape space (quotient of the triad by similarities) boils down to the sphere of radius $1/2$. These data are displayed on the right of Fig.\ref{Fig:Equi30}. In this example, the reference points of the 0-BSA to the 3-BSA are stable and identical to the ones of the FBS.  This is a behavior that we have observed in most of our simulations when  modes cannot be confused. This may not hold anymore if reference points were optimized on the sphere rather than on the data points only. The optimal 1-PBS (the best geodesic approximation) picks up different reference points.

\section{Discussion}

We investigated in the paper several notions of subspaces in manifolds generalizing the notion of affine span in a Euclidean space. The Fr\'echet / Karcher / exponential barycentric subspaces are the nested locus of weighted Fr\'echet / Karcher / exponential barycenters with positive or negative weights summing up to 1. The affine spans is the metric completion of the largest one (the EBS). It may be a non-connected manifold with boundaries. The completeness of the affine span enables reconnecting part of the subspace that arrive from different directions at the cut-locus of reference points if needed. It also ensures that there exits a closest point on the submanifold for data projection purposes, which is fundamental for dimension reduction purposes. The fact that modifying the power of the metric does not change the affine span is an unexpected stability result which suggests that the notion is quite central. Moreover, we have shown that the affine span encompass principal geodesic subspaces as limit cases. It would be interesting to show that we can obtain other types of subspaces like principal nested subspheres with higher order and non-local jets: some non-geodesic decomposition schemes such as loxodromes and splines could probably also be seen as  limit cases of barycentric subspaces.

Future work will address barycentric subspaces in interesting non-constant curvatures spaces. For instance, \cite{eltzner_dimension_2015} adaptively deforms the flat torus seen as a product of spheres into a unique sphere to allow principal nested spheres (PNS) analysis. A quick look at the flat torus shows that the the cut-locus of $k+1\leq n$ points in ${\cal S}_1^n$ divides the torus into $k^n$ cells in which the affine span is a $k$-dimensional linear subspace. The subspaces generated in each cell are generally disconnected, but when points coalesce with each others into a jet, the number of cells decreases in the complex and at the limit we recover a single cell that contain a connected affine span. For a first order jet, we recover as expected the restricted geodesic subspace (here a linear subspace limited to the cut locus of the jet base-point), but higher order jets may generate more interesting curved subspaces that may better describe the data geometry. 

The next practical step is obviously the implementation of generic algorithms to optimize barycentric subspaces in general Riemannian manifolds. Example algorithms include: finding a point with given barycentric coordinates (there might be several so this has to be a local search); finding the closest point (and its coordinates) on the barycentric subspace; optimizing the reference points to minimize the residual error after projection of data points, etc. If such algorithms can be designed relatively simply for simple specific manifolds as we have done here for constant curvature spaces, the generalization to general manifolds requires a study of the focal set of the barycentric subspaces or guarantying the correct behavior of algorithms. We conjecture that this is a stratified set of zero measure in generic cases. Another difficulty is linked to the non-identifiability of the subspace parameters. For constant curvature spaces, the right parameter space is actually the $k$-Grassmanian. In more general manifolds, the curvature and the interaction with the cut-locus break the symmetry of the barycentric subspaces, but lead to a poor numerical conditioning of the system good renormalization techniques need to be designed to guaranty the numerical stability. 

Finding the subspace that best explain the data is an optimization problem on manifolds. This raises the question of which metric should be considered on the space of barycentric subspaces. In this paper, we mainly see this space as the configuration space of $k+1$ affinely independent points, with convergence to spaces of jets (including non-local jets) when several points coalesce. Such a construction was named Multispace by \cite{olver_geometric_2001} in the context of symmetry-preserving numerical approximations to differential invariants. It is likely that similar techniques could be investigated to construct numerically stable implementations of barycentric subspaces of higher order parametrized by non-local jets, which are needed to optimize safely. Conversely, barycentric subspaces could help shedding a new light on the multispace construction for  differential invariants. 

Barycentric subspaces could probably be used to extend methods like the probabilistic PCA of \cite{tipping_probabilistic_1999}, generalized to PGA by \cite{zhang_probabilistic_2013}. A first easy step in that direction is to replace the reference points by reference distributions on the manifold and to look at the locus of weighted expected means. Interestingly, this procedure soften the constraints that we had in this paper about the cut locus. Thus, following \cite{karcher77}, reference distributions could be used in a mollifier smoothing approach to study the regularity of the barycentric subspaces.

For applications where data live on Lie groups, generalizing barycentric subspaces to more general non-Riemannian spaces like affine connection manifolds is a particularly appealing extension. In computational anatomy, for instance, deformations of shapes are lifted to a group of diffeomorphism for statistical purposes (see e.g. \cite{lorenzi:hal-00813835,lorenzi:hal-01145728}). All Lie groups can be endowed with a bi-invariant symmetric Cartan-Schouten connection for which geodesics are the left and right translation of one-parameter subgroups. This provides the Lie group with an affine connection structure which may be metric or not. When the group is the direct product of compact and Abelian groups, it admits a bi-invariant metric for which the Cartan-Schouten connection is the natural Levi-Civita connection. Other groups do not admit any bi-invariant metric (this is the case for rigid transformations in more than 2 dimensions because of the semi-direct product), so that a Riemannian structure can only be left or right invariant but not both.
However the bi-invariant Cartan-Schouten connection continues to exists, and one can design bi-invariant means using exponential barycenter as proposed by  \cite{pennec:hal-00699361}. Thus, we may still define exponential barycentric subspaces and affine spans in these affine connection spaces, the main difference being that the derivative of the log is not any more the Hessian of a distance function. This might considerably complexify the analysis of the generated subspaces. 

The second topic of this paper concerns the generalization of PCA to manifolds using Barycentric Subspace Analysis (BSA). \cite{damon_backwards_2013} argued  that an interesting generalization of PCA should rely on “nested sequence of relations”, like embedded linear subspaces in the Euclidean space or embedded spheres in PNS. Barycentric subspaces can naturally be nested by adding or removing points or equivalently by setting the corresponding barycentric coordinate to zero. Thus we can easily generalize PCA to manifolds using a forward analysis by iteratively adding one or more points at a time. At the limit where points coalesce at the first order, this amounts to build a flag  of (restricted) principal geodesic subspaces. Thus it generalizes the Principal Geodesic Analysis (PGA) of \cite{fletcher_principal_2004,sommer_optimization_2013} when starting with a zeroth dimensional space (the Fr\'echet mean) and the Geodesic PCA (GPCA) of \cite{huckemann_principal_2006,huckemann_intrinsic_2010} when starting directly with a first order jet defining a geodesic. One can also design a backward analysis by starting with a large subspace and iteratively removing one or more points to define embedded subspaces. 

However, the greedy optimization of these forward/backward methods generally leads to different solutions which are not optimal for all subspace jointly. The key idea is to consider PCA as a joint optimization of the whole flag of subspaces instead of each subspace independently. In a Euclidean space, we showed that the Accumulated Unexplained Variances (AUV) with respect to all the subspaces of the hierarchy (the area under the curve of unexplained variance) is a proper criterion on the space of Euclidean flags. We proposed to extend this criterion to barycentric subspaces in manifolds, where an ordering of the reference points naturally defines a flag of nested barycentric subspaces. A similar idea could be used with other iterative least-squares methods like partial least-squares (PLS) which are also one-step at a time minimization methods.

\section*{Acknowledgments}
This work was partially supported by the Erwin Schr\"odinger Institute in Vienna through a three-weeks stay in February 2015 during the program Infinite-Dimensional Riemannian Geometry with Applications to Image Matching and Shape. It was also partially supported by the Inria Associated team GeomStats between Asclepios and Holmes' lab at Stanford Statistics Dept. I would particularly like to thank Prof. Susan Holmes for fruitful discussions during the writing of the paper.

\begin{supplement}[id=suppA]
\sname{Supplement A}  
\stitle{Hessian of the Riemannian squared distance}
\slink[doi]{COMPLETED BY THE TYPESETTER}
\sdatatype{.pdf}
\sdescription{This supplementary material describes in more length the notions of Riemannian geometry that are underlying the main paper and investigates the Hessian of the Riemannian square distance whose eigenvalues control the local regularity of the barycentric subspaces. This is exemplified on the sphere and the hyperbolic space.}
\end{supplement}

\begin{supplement}[id=suppB]
\sname{Supplement B} 
\stitle{PCA as an optimization on the flag manifold}
\slink[doi]{COMPLETED BY THE TYPESETTER}
\sdatatype{.pdf}
\sdescription{This supplementary material details in length the proof that the flag of linear subspaces found by PCA optimizes the Accumulated Unexplained Variances (AUV) criterion in a Euclidean space.}
\end{supplement}

\bibliographystyle{imsart-nameyear}


\section*{Appendix A: Proof of Theorem~\ref{THM6}}
\label{ProofTHM6}

\begin{proof}
We first establish a useful formula exploiting the symmetry of the geodesics from $x$ to $y \not \in \C(x)$ with respect to time. Reverting time along a geodesic, we have: $\gamma_{(x,\lcp{xy})}(t) = \gamma_{(y,\lcp{yx})}(1-t)$, which means in particular that $\dot \gamma_{(x,\lcp{xy})}(1) = - \dot \gamma_{(y,\lcp{yx})}(0) = -\lcp{yx}$. Since $\gamma_{(x,\lcp{xy})}(t) = \exp_x(t \lcp{xy})$, we obtain $ \lcp{yx} = - D \left. \exp_x \right|_{\lcp{xy}} \lcp{xy}.$ Now, we also have $ \left( D \left. \exp_x \right|_{\lcp{xy}} \right).  D \left. \log_x \right|_y = \Id$ because $\exp_x( \log_x(y)) = y$. Finally, $D\exp_x$ and $D\log_x$ have full rank on $\M/\C(x)$ since there is no conjugate point before the cut-locus, so that we can multiply by their inverse and we end up with:
\begin{equation}
\label{eq:symgeo}
\forall y \not \in \C(x), \quad \lcp{xy} = - D \left. \log_x \right|_y \lcp{yx}.
\end{equation}

Let us first restrict to a convenient domain of $\M$: we consider a open geodesic  ball $B(x_0, \zeta)$ of radius $\zeta$ centered at $x_0$ and we exclude all the points of $\M$ which cut locus intersect this ball, or equivalently the cut-locus of all the points of this ball. We obtain an open domain ${\cal D}_{\zeta}(x_0) = \M \setminus \C(B(x_0, \zeta))$ in which $\log_x(y)$ is well defined and smooth for all $x \in  B(x_0, \zeta)$ and all $y\in {\cal D}_{\zeta}(x_0)$. Thanks to the symmetry of the cut-locus, $\log_y(x)$ is also well defined and smooth in the same conditions and Eq. (\ref{eq:symgeo}) can be rephrased:
\begin{equation}
\label{eq:symgeo2}
\forall x \in  B(x_0, \zeta), y\in {\cal D}_{\zeta}(x_0), \quad \lcp{xy} = - D \left. \log_x \right|_y \lcp{yx}.
\end{equation}

Let $\|w\|_{\infty} = \max_i \|w_i\|_{x_0}$ be the maximal length of the vectors $w_i$. For $\varepsilon < \zeta / \|w\|_{\infty}$, we have $\|\varepsilon w_i\|_{x_0} \leq \varepsilon \|w\|_{\infty} < \zeta$, so that all the points $x_i = \exp_{x_0}( \varepsilon w_i)$ belong to the open geodesic  ball $B(x_0, \zeta)$. Thus, $\log_x(x_i)$ and $\log_{x_i}(x)$ are well defined and smooth for any $x \in {\cal D}_{\zeta}(x_0)$, and we can write  the Taylor expansion  in a normal coordinate system at $x_0$using Eq.\ref{eq:symgeo2}:
\[\textstyle
\log_x(x_i(\varepsilon))
	= \log_x( x_0) + \varepsilon   D\log_x|_{x_0} w_i + O(\varepsilon ^2)
	=  D\log_x|_{x_0} \left( \varepsilon w_i - \log_{x_0}(x)  \right) + O(\varepsilon ^2)
\]

Any point $x \in {\cal D}_{\zeta}(x_0)$ can be defined by $\log_{x_0}(x) = \sum_{j=1}^k \alpha_i w_i + w_{\bot}$ with $\scal{w_{\bot}}{w_i} =0$ and suitable constraints on the $\alpha_i$ and $w_{\bot}$. Replacing $\log_x( x_0)$ by its value in the above formula, we get
\[\textstyle
\log_x(x_i(\varepsilon))  = D\log_x|_{x_0} \left( \varepsilon w_i - \sum_{j=1}^k \alpha_j w_j - w_{\bot} \right) + O(\varepsilon ^2).
\]
Since the matrix $D\log_x|_{x_0}$ is invertible, the EBS equation $\mathfrak{M}_1(x, \lambda)= \sum_{i=0}^k \lambda_i  \lcp{x x_i}  =0$ is equivalent to $\textstyle \textstyle w_{\bot} + \sum_{j=1}^k \alpha_j w_j - \varepsilon\left(\sum_{i=1}^k \nlambda_i w_i\right) = O(\varepsilon ^2).$ Projecting orthogonally to $W_{x_0}$, we get $w_{\bot} = O(\varepsilon^2)$: this means that any point of the limit EBS has to be of the form $x = \exp_{x_0}(\sum_{j=1}^k \alpha_i w_i)$. In other words, only points of the restricted geodesic subspace  $GS^*(W_{x_0})$ can be solutions of the limit EBS equation.

Now, for a point of $GS^*(W_{x_0})$ to be a solution of the limit EBS equation, there should exists barycentric coordinates $\lambda$ such that $\sum_{j=1}^k (\alpha_j - \varepsilon \nlambda_i) w_j = O(\varepsilon ^2)$. Choosing $\lambda = (\varepsilon - \sum_i \alpha_i: \alpha_1 : \ldots : \alpha_k)$, we obtain the normalized barycentric coordinates $\nlambda_i = \alpha_i / \varepsilon$ for $1\leq i \leq k$ and $\nlambda_0 = 1 - (\sum_i \alpha_i) / \varepsilon$ that satisfy this condition. Thus any point of $GS^*(W_{x_0}) \cap {\cal D}_{\zeta}(x_0)$ is a solution of the limit EBS equation with barycentric coordinates at infinity on $\Pk$. Taking $\zeta$ sufficiently small, we can include all the points of $GS^*(W_{x_0})$.

\end{proof}


\clearpage

\setcounter{equation}{0}
\setcounter{figure}{0}
\setcounter{table}{0}
\setcounter{page}{1}
\setcounter{section}{0}
\setcounter{subsection}{0}%
\setcounter{theorem}{0}
\setcounter{definition}{0}
\setcounter{proposition}{0}
\makeatletter
\renewcommand{\thetheorem}{A\arabic{theorem}}
\renewcommand{\thepage}{A\arabic{page}}
\renewcommand{\thesection}{A\arabic{section}}
\renewcommand{\theequation}{A\arabic{equation}}
\renewcommand{\thefigure}{\arabic{figure}}
\renewcommand{\bibnumfmt}[1]{[A#1]}
\renewcommand{\citenumfont}[1]{A#1}

\begin{frontmatter}

\pdfbookmark[0]{Supplementary Materials A: Hessian of the Riemannian Squared Distance}{SupA}

\title{Supplementary Materials A:\\Hessian of the Riemannian Squared Distance}

\runtitle{Hessian of the Riemannian Squared Distance}

\begin{aug}


\author{\fnms{Xavier} \snm{Pennec}\ead[label=e1A]{xavier.pennec@inria.fr}}
\address{Asclepios team, Inria Sophia Antipolis \\ 2004 Route des Lucioles, BP93
\\ F-06902 Sophia-Antipolis Cedex, France \printead{e1A}}
\affiliation{Universit\'e C\^ote d'Azur and Inria Sophia-Antipolis M\'editerran\'ee}

\runauthor{X. Pennec}
\end{aug}


\begin{abstract}
This supplementary material details the notions of Riemannian geometry that are underlying the  paper {\em Barycentric Subspace Analysis on Manifolds}. In particular, it investigates the Hessian of the Riemannian square distance whose definiteness controls the local regularity of the barycentric subspaces. This is exemplified on the sphere and the hyperbolic space.

\end{abstract}

\end{frontmatter}

\section{Riemannian manifolds}

A Riemannian manifold is a differential manifold endowed with a smooth collection of scalar products $\scal{.}{.}_{x}$  on each tangent space $T_{x}\M$ at point $x$ of the manifold, called the Riemannian metric. In a chart, the metric is expressed by a symmetric positive definite matrix $G(x) = [ g_{ij}(x) ]$ where each element is given by the dot product of the tangent vector to the coordinate curves: $g_{ij}(x) = \scal{\partial_i}{\partial_j}_x$. This matrix is called the {\em local representation of the Riemannian metric} in the chart $x$ and the dot products of two vectors $v$ and $w$ in $T_{x}\M$ is now $\scal{v}{w}_x = v\trp \: G(x)\: w = g_{ij}(x) v^i w^j$ using the Einstein summation convention which implicitly sum over the indices that appear both in upper position (components of [contravariant] vectors) and lower position (components of covariant vectors (co-vectors)).

\subsection{Riemannian distance and geodesics}
If we consider a curve  $\gamma(t)$ on the manifold, we can compute at each point its instantaneous speed vector $\dot{\gamma}(t)$ (this operation only involves the differential structure) and its norm $ \left\| \dot{\gamma}(t)\right\|_{\gamma(t)}$ to obtain the instantaneous speed (the Riemannian metric is needed for this operation).  To compute the length of the curve, this value is integrated along the curve:
\[
  \label{curve_length}
  {\cal L}_a^b (\gamma) = \int_a^b \left\| \dot{\gamma}(t)\right\|_{\gamma(t)}
  dt = 
\int_a^b \left( \scal{ \dot{\gamma}(t)}{\dot{\gamma}(t)
}_{\gamma(t)} \right)^{\frac{1}{2}}dt 
\]
The distance between two points of a connected Riemannian manifold is the minimum length among the curves joining these points.  The curves realizing this minimum are called geodesics. Finding the curves realizing the minimum length is a difficult problem as any time-reparameterization is authorized. Thus one rather defines the metric geodesics as the critical points of the energy functional ${\cal E}(\gamma) = \frac{1}{2}\int_0^1 \left\| \dot \gamma (t)\right\|^2\: dt$. It turns out that they also optimize the length functional but they are moreover parameterized proportionally to arc-length.

Let $[g^{ij}] = [g_{ij}]\inv$ be the inverse of the metric matrix (in a given coordinate system) and $\Gamma^i_{jk} = \frac{1}{2} g^{im}\left( \partial_k g_{mj} + \partial_j g_{mk} - \partial_m g_{jk} \right)$ the Christoffel symbols.  The calculus of variations shows the geodesics are the curves satisfying the following second order differential system:
\[  \ddot{\gamma}^i + \Gamma^i_{jk} \dot{\gamma}^j \dot{\gamma}^k = 0.
\]

 The fundamental theorem of Riemannian geometry states that on any Riemannian manifold there is a unique (torsion-free) connection which is compatible with the metric, called the Levi-Civita (or metric) connection. For that choice of connection, shortest paths (geodesics) are auto-parallel curves ("straight lines"). This connection is determined in a local coordinate system through the  Christoffel symbols:   $\nabla_{\partial_i}\partial_j =  \Gamma_{ij}^k \partial_k$.	With these conventions, the covariant derivative of the coordinates $v^i$ of a vector field is $v^i_{;j} = (\nabla_j v)^i = \partial_j v^i +\Gamma^i_{jk} v^k$.

In the following, we only consider the Levi-Civita connection and we assume that the manifold is geodesically complete, i.e. that the definition domain of all geodesics can be extended to $\R$. This means that the manifold has no boundary nor any singular point that we can reach in a finite time. As an important consequence, the Hopf-Rinow-De~Rham theorem states that there always exists at least one minimizing geodesic between any two points of the manifold (i.e. whose length is the distance between the two points).

\subsection{Normal coordinate systems}
\label{ExpMapIntro}

Let $x$ be a point of the manifold that we consider as a local reference and $v$ a vector of the tangent space $T_{x}\M$ at that point.  From the theory of second order differential equations, we know that there exists one and only one geodesic $\gamma_{(x,v)}(t)$ starting from that point with this tangent vector. This allows to wrap the tangent space onto the manifold, or equivalently to develop the manifold in the tangent space along the geodesics (think of rolling a sphere along its tangent plane at a given point). The mapping $ \exp_{x}(v) = \gamma_{(x,v)}(1)$ of each vector $v \in
T_{x}\M$ to the point of the manifold that is reached after a unit time by the geodesic $\gamma_{(x,v)}(t)$  is called the {\em exponential map} at point $x$. Straight lines going through 0 in the tangent space are transformed into geodesics going through point $x$ on the manifold and distances along these lines are conserved.

The exponential map is defined in the whole tangent space $T_{x}\M$ (since the manifold is geodesically complete) but it is generally one-to-one only locally around 0 in the tangent space (i.e.  around $x$ in the manifold). In the sequel, we denote by $\lcp{xy}=\log_{x}(y)$ the inverse of the exponential map: this is the smallest vector (in norm) such that $y = \exp_{x}(\lcp{xy})$. It is natural to search for the maximal domain where the exponential map is a diffeomorphism. If we follow a geodesic  $\gamma_{(x, v)}(t) = \exp_{x}(t\: v)$ from $t=0$ to infinity, it is either always minimizing all along or it is minimizing up to a time $t_0 < \infty$ and not any more after (thanks to the geodesic completeness). In this last case, the point $ \gamma_{(x,v)}(t_0)$ is called a {\em cut point} and the corresponding tangent vector $t_0\: v$ a {\em tangential cut point}. The set of tangential cut points at $x$ is called the {\em tangential cut locus} $C(x) \in T_{x}\M$, and the set of cut points of the geodesics starting from $x$ is the {\em cut locus} $\C(x) = \exp_{x}(C(x)) \in \M$. This is the closure of the set of points where several minimizing geodesics starting from $x$ meet. On the sphere ${\mathcal S}_2(1)$ for instance, the cut locus of a point $x$ is its antipodal point and the tangential cut locus is the circle of radius $\pi$.	

The maximal bijective domain of the exponential chart is the domain  $D(x)$ containing 0 and delimited by the tangential cut locus ($\partial D(x) = C(x)$). This domain is connected and star-shaped with respect to the origin of  $T_{x}\M$. Its image by the exponential map covers all the manifold except the cut locus, which has a null measure.  Moreover, the segment $[0,\lcp{xy}]$ is mapped to the unique minimizing geodesic from $x$ to $y$: geodesics starting from $x$ are straight lines, and the distance from the reference point are conserved. This chart is somehow the ``most linear'' chart of the manifold with respect to the reference point $x$. 

When the tangent space is provided with an orthonormal basis, this is called {\em an normal coordinate systems at $x$}. A set of normal coordinate systems at each point of the manifold realize an atlas which allows to work very easily on the manifold. The  implementation of the exponential and logarithmic maps (from now on $\exp$ and $\log$) is indeed the basis of programming on Riemannian manifolds, and we can express using them practically all the geometric operations needed for statistics \citep{A:pennec:inria-00614994} or image processing \citep{A:pennec:inria-00614990}.

The size of the maximal definition domain is quantified by the {\em   injectivity radius} $\mbox{inj}(\M,x) = \dist(x,\C(x))$, which is the maximal radius of centered balls in $T_{x}\M$ on which the exponential map is one-to-one. The injectivity radius of the manifold $\mbox{inj}(\M)$ is the infimum of the injectivity over the manifold. It may be zero, in which case the manifold somehow tends towards a singularity (think e.g. to the surface $z=1/\sqrt{x^2+y^2}$ as a sub-manifold of $\R^3$).

In a Euclidean space, normal coordinate systems are realized by orthonormal coordinates system translated at each point: we have in this case $\lcp{xy} = \log_{x}(y) = y-x$ and $\exp_{x}(\lcp{v}) = x+\lcp{v}$. This example is more than a simple coincidence. In fact, most of the usual operations using additions and subtractions may be reinterpreted in a Riemannian framework using the notion of {\em bipoint}, an antecedent of vector introduced during the 19th Century. Indeed, vectors are defined as equivalent classes of bipoints in a Euclidean space. This is possible because we have a canonical way (the translation) to compare what happens at two different points. In a Riemannian manifold, we can still compare things locally (by parallel transportation), but not any more globally. This means that each ``vector'' has to remember at which point of the manifold it is attached, which comes back to a bipoint.

\section{Hessian of the squared distance}

\subsection{Computing the differential of the Riemannian log}
On $\M / C(y)$, the Riemannian gradient $\nabla^a = g^{ab} \partial_b$ of the squared distance $d^2_y(x)=\dist^2(x, y)$ with respect to the fixed point $y$ is well defined and is equal to $\nabla d^2_y(x) = -2 \log_x(y)$.  The Hessian operator (or double covariant derivative) $\nabla^2 f(x)$ from $T_x\M$ to $T_x\M$ is the covariant derivative of the gradient, defined by the identity $\nabla^2 f(v) = \nabla_v(\nabla f)$. In  a normal coordinate system at point $x$, the Christoffel symbols vanish at $x$, so that the Hessian operator of the squared distance can be expressed with the standard differential $D_x$ with respect to the point $x$:
\[
\nabla^2 d^2_y(x) = -2 (D_x \log_x(y)).
\]
The points $x$ and $y=\exp_x(v)$ are called conjugate if $D\exp_x(v)$ is singular. It is known that the cut point (if it exists) occurs at or before the first conjugate point along any geodesic \citep{A:LeeCurvature:1997}. Thus, $D\exp_x(v)$ has full rank inside the tangential cut-locus of $x$. This is in essence why there is a well posed inverse function  $\lcp{x y} = \log_x(y)$,  called the Riemannian log, which is continuous and differentiable everywhere except at the cut locus of $x$. Moreover, its differential can be computed easily: since $\exp_x(\log_x(y)) =y $, we have $\left. D\exp_x \right|_{\lcp{xy}}  D\log_x (y) = \Id$, so that 
\begin{equation}
D\log_x (y) =  \left( \left. D\exp_x \right|_{\lcp{xy}} \right)^{-1}
\label{eq:Dylogxy}
\end{equation}
is well defined and of full rank on $\M/C(x)$.

We can also see the Riemannian log $\log_x(y) = \lcp{x y}$ as a function of the foot-point $x$, and differentiating $\exp_x(\log_x(y))=y$ with respect to it gives:
$
\left. D_x \exp_x \right|_{\lcp{xy}} + \left. D\exp_x \right|_{\lcp{xy}}.D_x\log_x (y) =0.
$ 
Once again, we obtain a well defined and full rank differential for $x \in \M/C(y)$:
\begin{equation}
D_x\log_x (y) = - \left( \left. D\exp_x \right|_{\lcp{xy}}\right)^{-1} \left. D_x \exp_x \right|_{\lcp{xy}}.
\label{eq:Dxlogxy}
\end{equation}
The Hessian of the squared distance can thus be written:
\[
\frac{1}{2}\nabla^2 d^2_y(x) = - D_x \log_x(x_i) =  \left( \left. D\exp_x \right|_{\lcp{xy}}\right)^{-1} \left. D_x \exp_x \right|_{\lcp{xy}}.
\]
If we notice that $J_0(t) = \left. D\exp_x\right|_{t \lcp{xy}}$ (respectively $J_1(t) = \left. D_x \exp_x\right|_{t \lcp{xy}}$) are actually matrix Jacobi field solutions of the Jacobi equation $\ddot J (t) + R(t) J(t) =0$ with $J_0(0)=0$ and $\dot J_0(0)=\Id_n$ (respectively $J_1(0)=\Id_n$ and $\dot J_1(0)=0$), we see that the above formulation of the Hessian operator is equivalent to the one of \cite{A:villani_regularity_2011}[Equation 4.2]: $\frac{1}{2}\nabla^2 d^2_y(x) = J_0(1)\inv J_1(1)$.

\subsection{Taylor expansion of the Riemannian log}

In order to better figure out what is the dependence of the Hessian of the squared Riemannian distance with respect to curvature, we compute here the Taylor expansion of the Riemannian  log function. Following \cite{A:brewin_riemann_2009}, we consider a normal coordinate system centered at $x$ and $x_v = \exp_x(v)$ a variation of the point $x$. We denote by $R_{ihjk}(x)$ the coefficients of the curvature tensor at $x$ and by $\epsilon$ a conformal gauge scale that encodes the size of the path  in terms of $\|v \|_x$ and $\| \lcp{xy} \|_x$ normalized by the curvature (see \cite{A:brewin_riemann_2009} for details).

In a normal coordinate system centered at $x$,   we have the following Taylor expansion of the metric tensor coefficients:
\begin{equation}
\begin{split}
g_{ab}(v) = & g_{ab} - \frac{1}{3} R_{cabd}v^c v^d 
- \frac{1}{6} \nabla_e R_{cabd} v^e v^c v^d 
\\ & + \left( - \frac{1}{20} \nabla_e \nabla_f R_{cabd} + \frac{2}{45} R_{cad}^g R_{ebf}^h \delta_{gh} \right) v^c v^d v^e v^f + O(\epsilon^5). %
\end{split}
\label{eq:TaylorMetric}
\end{equation}  

 A geodesic joining point $z$ to point $z+\delta z$ has tangent vector: 
\begin{eqnarray*}
\left[ \log_z(z+\Delta z) \right]^a 
&= &\Delta z^a  +\frac{1}{3}  z^b  \Delta z^c \Delta z^d R^a_{cbd}
              + \frac{1}{12} z^b z^c \Delta z^d \Delta z^e \nabla_d R^a_{bce}
\\ && + \frac{1}{6} z^b z^c \Delta z^d \Delta z^e \nabla_b R^a_{dce}
      + \frac{1}{24} z^b z^c \Delta z^d \Delta z^e \nabla^a R_{bdce}
\\ && + \frac{1}{12}  z^b \Delta z^c \Delta z^d \Delta z^e \nabla_c R^a_{dbe} 
+ O(\epsilon^4).
\end{eqnarray*}

Using $ z= v$ and $z+\Delta z = \lcp{xy}$ (i.e. $\Delta z = \lcp{xy} -v)$ in a normal coordinate system centered at $x$, and keeping only the first order terms in $v$, we obtain the first terms of the series development of  the log:
\begin{equation}
\label{eq:TaylorLog}
\begin{split}
 \left[ \log_{x +v}(y) \right]^a  
& =  \lcp{xy}^a -v^a + \frac{1}{3}  R^a_{cbd} v^b \lcp{xy}^c  \lcp{xy}^d 
 + \frac{1}{12} \nabla_c R^a_{dbe}  v^b \lcp{xy}^c \lcp{xy}^d \lcp{xy}^e 
+ O(\epsilon^4).
\end{split}
\end{equation}
Thus, the differential of the log with respect to the foot point is:
\begin{equation}
\label{eq:Diff_logSupp} 
- \left[  D_x \log_x(y) \right]^a_b = \delta^a_b - \frac{1}{3} R^a_{cbd} \lcp{xy}^c \lcp{xy}^d - \frac{1}{12} \nabla_c R^a_{dbe}  \lcp{xy}^c \lcp{xy}^d \lcp{xy}^e  + O(\epsilon^3).
\end{equation}
Since we are in a normal coordinate system,  the zeroth order term is the identity matrix, like in the Euclidean space, and the first order term vanishes. The Riemannian curvature tensor appear in the second order term and its covariant derivative in the third order term. The important point here is to see that the curvature is the leading term that makes this matrix departing from the identity (i.e. the Euclidean case) and which may lead to the non invertibility of the differential.

\section{Example on spheres}
\label{sec:sphere}

We consider the unit sphere in dimension $n \geq 2$ embedded in $\R^{n+1}$ and we represent points of $\M = {\cal S}_n$ as unit vectors in $\R^{n+1}$. The tangent space at $x$ is naturally represented by the linear space of vectors orthogonal to $x$:  $T_x{\cal S}_n = \{ v \in \R^{n+1}, v\trp x =0\}$. The natural Riemannian metric on the unit sphere is inherited from the Euclidean metric of the embedding space $\R^{n+1}$. With these conventions, the Riemannian distance is the arc-length $d(x,y) = \arccos( x\trp y)= \theta \in [0,\pi]$. Denoting $f(\theta) = 1/ \mbox{sinc}(\theta) = { \theta}/{\sin(\theta)}$, the spherical exp and log maps are:
\begin{eqnarray}
\exp_x(v) & = & \cos(\| v\|) x +  \mbox{sinc}(\| v\|) v / \| v\| \\
\log_x(y) & = & f(\theta) \left( y - \cos(\theta) x \right)
\quad \text{with} \quad \theta = \arccos(x\trp y).
\end{eqnarray}
Notice that  $f(\theta)$  is a smooth function from $]-\pi;\pi[$ to $\R$ that is always greater than one and is locally quadratic at zero: $f(\theta) = 1 +\theta^2/6 + O(\theta^4)$.

\subsection{Hessian of the squared distance on the sphere}

To compute the gradient and Hessian of functions on the sphere, we first need a chart in a neighborhood of a point $x\in {\cal S}_n$. We consider  the unit vector $x_v = \exp_x(v)$ which is a variation of $x$ parametrized by the tangent vector $v \in T_x{\cal S}_n$ (i.e. verifying $x\trp v=0$). In order to extend this mapping to the embedding space to simplify computations, we consider that $v$ is the orthogonal projection of an unconstrained vector $w \in \R^{n+1}$ onto the tangent space at $x$: $v=(\Id -x x\trp)w$. Using the above formula for the exponential map, we get at first order $x_v  =   x  -  v + O(\|v\|^2)$ in the tangent space or $x_w = x + (\Id -x x\trp)w +  O(\|w\|^2)$ in the embedding space.

It is worth verifying first that the gradient of the squared distance  $\theta^2 = d^2_y(x) =  \arccos^2\left( {x\trp y} \right)$ is indeed $\nabla d^2_y(x) = -2 \log_x(y)$. We considering the variation $x_w = \exp_x( (\Id-xx\trp)w)= x +(\Id-xx\trp)w + O(\|w\|^2)$. Because $D_x \arccos(y \trp x) = -y \trp / \sqrt{ 1 - (y \trp x)^2}$, we get:
\[
D_w \arccos^2\left( {x_w\trp y} \right)
= \frac{ -2 \theta}{\sin \theta} y\trp (\Id-xx\trp)
= -2 f(\theta) y\trp (\Id-xx\trp),
\]
and the gradient is as expected:
\begin{equation}
 \nabla d^2_y(x) = -2 f(\theta) (\Id -xx\trp)y = -2 \log_x(y).
\label{eq:GradDistSphere}
\end{equation}

To obtain the Hessian, we now compute the Taylor expansion of $\log_{x_w}(y)$. First, we have 
\[
f(\theta_w) 
= f(\theta) - \frac{f'(\theta)}{\sin \theta} {y\trp (\Id-xx\trp)w} + O(\|w\|^2),
\]
with  $ f'( \theta ) 
= (1-f(\theta)\cos \theta)/\sin \theta$.
Thus, the first order Taylor expansion of $\log_{x_w}(y) = f(\theta_w) ( y - \cos(\theta_w) x_w )$ is:
\[
\begin{split}
\log_{x_w}(y) 
& = 
f(\theta_w)
\left( \Id -xx\trp -(\Id  -xx\trp)w x\trp - x w\trp (\Id  -xx\trp) \right)y  + O(\|w\|^2) \\
\end{split}
\]
so that
\[
\begin{split}
 -2 D_w \log_{x_w}(y) = 
\frac{f'(\theta)}{\sin \theta}  ( \Id -xx\trp )y y\trp (\Id-xx\trp)  - f(\theta) \left( x\trp y \Id    + x y\trp \right) ( \Id -xx\trp ) 
\end{split}
\]
Now, since we have computed the derivative in the embedding space, we have obtained the Hessian with respect to the flat connection of the embedding space, which exhibits a non-zero normal component. In order to obtain the Hessian with respect to the connection of the sphere, we need to project back on $T_x{\cal S}_n$ (i.e. multiply by $(\Id -x x\trp)$ on the left) and we obtain:
\[
\begin{split}
\frac{1}{2} H_x(y) 
& =  \left( \frac{1- f(\theta) \cos\theta}{\sin^2 \theta }  \right) \left( \Id - x x\trp  \right) yy\trp (\Id -xx\trp) 
 +  f( \theta  )\cos \theta (\Id -xx\trp)   \\
& = \left( \Id - x x\trp  \right) \left( ( 1 - f(\theta) \cos\theta ) \frac{ yy\trp}{ \sin^2\theta} 
 +  f( \theta  )\cos\theta \Id \right) (\Id -xx\trp).
\end{split}
\]

To simplify this expression, we note that $\|(\Id-xx\trp)y\|^2 = \sin \theta$, so that $u = \frac{(\Id-xx\trp)y }{ \sin \theta} = \frac{ \log_x(y) }{\theta}$ is a unit vector of the tangent space at $x$ (for $y \not = x$ so that $\theta > 0$). Using this unit vector and the intrinsic parameters $\log_x(y)$ and $\theta = \| \log_x(y)\|$,  we can rewrite the Hessian:
\begin{eqnarray}
\qquad \frac{1}{2} H_x(y) & =  & f( \theta  )\cos\theta  (\Id -xx\trp) 
+  \left( \frac{ 1- f(\theta)\cos\theta}{\theta^2 } \right) \log_x(y) \log_x(y)\trp
\\
& =  & u u\trp + f( \theta  )\cos\theta  (\Id -xx\trp - u u\trp) 
\end{eqnarray}

The eigenvectors and eigenvalues of this matrix are now very easy to determine. By construction, $x$ is an eigenvector with eigenvalue $\mu_0=0$. Then the vector $u$ (or equivalently $\log_x(y) = f(\theta) (\Id-x x \trp) y = \theta u$) is an eigenvector with eigenvalue $\mu_1=1$. Finally, every vector $u$ which is orthogonal to these two vectors (i.e. orthogonal to the plane spanned by 0, $x$ and $y$) has eigenvalue $\mu_2= f(\theta)\cos\theta = \theta \cot \theta$. This last eigenvalue is positive for $\theta \in [0,\pi/2[$, vanishes for $\theta = \pi/2$ and becomes negative for $\theta \in ]\pi/2 \pi[$.
We retrieve here the results of \cite[lemma 2]{A:buss_spherical_2001} expressed in a more general coordinate system.

\section{Example on the hyperbolic space $\Hyp^n$}

We consider in this section the hyperboloid of equation $-x_0^2 + x_1^2 \ldots  x_n^2 = -1$ (with $x_0 > 0$ and $n \geq 2$) embedded in $\R^{n+1}$. Using the notations $x=(x_0,\hat x)$ and the indefinite nondegenerate symmetric bilinear form $\scal{x}{y}_* = x\trp J y= \hat x\trp \hat y -x_0 y_0$ with $ J = \mbox{diag}(-1, \Id_n)$, the hyperbolic space  can be seen as the sphere $\|x\|^2_* =-1$ of radius -1 in the $(n+1)$-dimensional Minkowski space:
\[
\Hyp^n = \{ x \in \R^{n,1} / \|x\|^2_* = \|\hat x\|^2 -x_0^2 = -1 \}.
\]
A point in $\M = \Hyp^n \subset \R^{n,1}$ can be parametrized  by $x=(\sqrt{1+\|\hat x\|^2}, \hat x)$ for $\hat x \in \R^n$ (Weierstrass coordinates). This happen to be in fact a global diffeomorphism that provides a very convenient global chart of the hyperbolic space. We denote $\pi(x)=\hat x$ (resp. $\pi\inv(\hat x)= (\sqrt{1+\|\hat x\|^2}, \hat x)$) the coordinate map from $\Hyp^n$ to $\R^n$ (resp. the parametrization map from $\R^n$ to $\Hyp^n$). The Poincarr\'e ball model is another classical models of the hyperbolic space $\Hyp^n$ which can be obtained by a stereographic projection of the hyperboloid onto the hyperplane $x_0 = 0$ from the south pole $(-1, 0 \ldots, 0)$. 

A tangent vector $v=(v_0, \hat v)$  at point $x=(x_0,\hat x)$ satisfies $\scal{x}{v}_* = 0$, i.e. $x_0 v_0 = \hat x\trp \hat v$, so that 
\[
T_x \Hyp^n = \left\{  \left( \frac{\hat x\trp \hat v}{\sqrt{1+\|\hat x\|^2}}, \hat v\right),\quad \hat v\in \R^{n} \right\}.
\]
The natural Riemannian metric on the hyperbolic space is inherited from the Minkowski metric of the embedding space $\R^{n,1}$: the scalar product of two vectors $u=(\hat x\trp \hat u / \sqrt{1+\|\hat x\|^2},\hat u)$ and $v=(\hat x\trp \hat v / \sqrt{1+\| \hat x\|^2}, \hat v)$ at $x=(\sqrt{1+\|\hat x\|^2}, \hat x)$ is 
\[
\scal{u}{v}_* = u\trp J v = -u_0 v_0 + \hat u\trp \hat v = \hat u\trp \left( -\frac{\hat x \hat x\trp}{1+\|\hat x\|^2} + \Id \right)  \hat v 
\] 
The metric matrix expressed in the coordinate chart $G=\Id - \frac{ \hat x \hat x\trp}{1+\|\hat x\|^2}$ has eigenvalue 1, with multiplicity $n-1$,  and $1/(1+\|\hat x\|^2)$ along the eigenvector $x$. It is thus positive definite. 

With these conventions, geodesics are the trace of 2-planes passing through the origin and the Riemannian distance is the arc-length:
\begin{equation} 
   d(x,y) = \arccosh( - \scal{x}{y}_* ). 
\end{equation}
The hyperbolic exp and log maps are:
\begin{eqnarray}
\quad \exp_x(v) &=& \cosh(\| v\|_* ) x +  {\sinh(\| v\|_* )} v / {\| v\|_* }
\\
\log_x(y) &=& f_*(\theta) \left( y - \cosh(\theta) x \right)
\quad \text{with} \quad \theta = \arccosh( -\scal{x}{y}_* ),
\end{eqnarray}
where $f_*(\theta) = { \theta}/{\sinh(\theta)}$ is a smooth function from $\R$ to $(0,1]$ that is always positive and is locally quadratic at zero: $f_*(\theta) = 1 - \theta^2/6 + O(\theta^4)$.

\subsection{Hessian of the squared distance on the hyperbolic space}

We first verify that the gradient of the squared distance $d^2_y(x) =  \arccosh^2\left( -<x,y>_* \right)$ is indeed $\nabla d^2_y(x) = -2 \log_x(y)$.  Let us consider a variation of the base-point along the tangent vector $v$ at $x$ verifying $\scal{v}{x}_*=0$: 
\[ x_{v} = \exp_x(v) = \cosh(\| v\|_* ) x +  \frac{\sinh( \| v\|_* )}{ \| v\|_* } v
 = x +   v  + O( \| v \|_*^2).
\]
In order to extend this mapping to the embedding space around the paraboloid, we consider that $v$ is the projection $v=w + \scal{w}{x}_* x$ of an unconstrained vector $w\in \R^{n,1}$ onto the tangent space at $T_x \Hyp^n$. Thus, the variation that we consider in the embedding space is
\[
x_w = x + \partial_w x_w + O(\|w\|^2_Q) \quad \mbox{with} \quad 
\partial_w x_w = w + \scal{w}{x}_* x = (\Id + x x\trp J) w.
\]

Now, we are interested in the impact of such a variation on $\theta_w = d_y(x_w) =\arccosh\left( - \scal{x_w}{y}_* \right)$. Since $\arccosh'(t) = \frac{1}{\sqrt{t^2 -1}}$, and $\sqrt{\cosh(\theta)^2 -1} = \sinh(\theta)$ for a positive $\theta$, we have:
\[
{d}/{dt} \left. \arccosh(t) \right|_{t=\cosh(\theta)} = 
 { 1}/{{\sqrt{\cosh(\theta)^2 -1}}}
= {1}/{\sinh(\theta)},
\]
so that 
\[
\theta_w = \theta -  \frac{1}{\sinh(\theta)} \scal{w + \scal{w}{x}_* x}{y}_* + O(\|v\|_*^2).
\]
This means that the directional derivative is 
\[
\partial_w \theta_w =  -  \frac{1}{\sinh(\theta)} \scal{w + \scal{w}{x}_* x}{y}_* 
= - \frac{1}{\sinh(\theta)} \scal{w}{y -\cosh(\theta)  x} 
\]
so that $ \partial_w \theta_w^2 = -2 f_*(\theta) \scal{w}{y - \cosh(\theta) x }_*.$ Thus, the gradient in the embedding space defined by $<\nabla d^2_y(x) , w>_* = \partial_w \theta_w^2$ is as expected:
\begin{equation}
\nabla d^2_y(x) =  
- 2 f_*(\theta) (y- \cosh(\theta) x) = - 2 \log_x(y).
\end{equation}

To obtain the Hessian, we now compute the Taylor expansion of $\log_{x_w}(y)$. First, we compute the variation of  $f_*(\theta_w) = \theta_w / \sinh(\theta_w)$:
\[
\partial_w f_*(\theta_w) = {f_*'(\theta)} \: \partial_w \theta_w
= - \frac{f_*'(\theta)}{\sinh(\theta)} \scal{w}{y -\cosh(\theta)  x}_*
= - \frac{f_*'(\theta)}{\theta} \scal{w}{\log_x(y)}_*
\]
with  $ f_*'( \theta ) = (1-f_*(	\theta)\cosh \theta)/\sinh \theta = (1- \theta \coth \theta)/\sinh \theta$. The variation of $\cosh \theta_w$ is:
\[
\partial_w \cosh \theta_w = \sinh \theta \: \partial_w \theta_w 
= - \scal{w}{y -\cosh(\theta)  x}_*.
\]
Thus, the first order variation of $\log_{x_w}(y)$ is:
\[
\begin{split}\partial_w \log_{x_w}(y) 
&= \partial_w f_*(\theta_w) (y-\cosh \theta x ) - f_*(\theta) \left( \partial_w \cosh(\theta_w) x + \cosh(\theta) \partial_w x_w \right) \\
&= - \frac{f_*'(\theta)\sinh\theta}{\theta^2} \scal{w}{\log_x(y)}_*  \log_x(y) \\
	&\:\: + f_*(\theta) \left( \scal{w}{y -\cosh(\theta)  x}_* x  -\cosh(\theta) (w + \scal{w}{x}_* x)\right) \\
	&= - \frac{(1- \theta \coth \theta)}{\theta^2} \scal{w}{\log_x(y)}_*  \log_x(y) \\
	&\:\: + \scal{w}{\log_x(y)}_* x  - \theta \coth(\theta) (w + \scal{w}{x}_* x).
\end{split}
\]
This vector is a variation in the embedding space: it displays a normal component to the hyperboloid $ \scal{w}{\log_x(y)}_* x $ which reflects the extrinsic curvature of the hyperboloid in the Minkowski space (the mean curvature vector is $-x$), and a tangential component which measures the real variation in the tangent space:
\[
\begin{split}
(\Id + x x\trp J) \partial_w \log_{x_w}(y)  = 
& - \frac{(1- \theta \coth \theta)}{\theta^2} \scal{w}{\log_x(y)}_*  \log_x(y) \\
& - \theta \coth(\theta) (J + x x\trp) J w.
\end{split}
\]
Thus the intrinsic gradient is:
\[
D_x \log_{x}(y)  =  
- \frac{(1- \theta \coth \theta)}{\theta^2}  \log_x(y) \log_x(y)\trp J
- \theta \coth(\theta) (\Id + x x\trp J).
\]
Finally, the Hessian of the square distance, considered as an operator from $T_x\Hyp^n$ to $T_x\Hyp^n$,  is $H_x(y)(w) = -2 D_x \log_{x}(y) w$.  Denoting $u=  \log_x(y) / \theta$ the unit vector of the tangent space at $x$ pointing towards the point $y$, we get in matrix form: 
\[
\frac{1}{2} H_x(y) 
= u u\trp J + \theta \coth \theta  (J + x x\trp -u u\trp) J 
\]
In order to see that the Hessian is symmetric, we have to lower an index (i.e. multiply on the left by J) to obtain the bilinear form:
\[
 H_x(y) (v,w) = \scal{v}{H_x(y)(w)}_* = 2 v\trp J \left( u u\trp + \theta \coth \theta (J + x x\trp -u u\trp) \right) J w.
\]

The eigenvectors and eigenvalues of (half) the Hessian operator are now easy to determine. By construction, $x$ is an eigenvector with eigenvalue $0$ (restriction to the tangent space). Then, within the tangent space at $x$, the vector $u$ (or equivalently $\log_x(y) = \theta u$) is an eigenvector with eigenvalue $1$. Finally, every vector $v$ which is orthogonal to these two vectors (i.e. orthogonal to the plane spanned by 0, $x$ and $y$) has eigenvalue $\theta \coth \theta \geq 1$ (with equality only for $\theta=0$). Thus, we can conclude that the Hessian of the squared distance is always positive definite and does never vanish along the hyperbolic space. This was of course expected since it is well known that the Hessian stay positive definite for negatively curved spaces \citep{A:bishop_manifolds_1969}. As a consequence, the squared distance is a convex function and has a unique minimum. 


\clearpage

\setcounter{equation}{0}
\setcounter{figure}{0}
\setcounter{table}{0}
\setcounter{page}{1}
\setcounter{section}{0}
\setcounter{theorem}{0}
\setcounter{definition}{0}
\setcounter{proposition}{0}
\makeatletter
\renewcommand{\thetheorem}{B\arabic{theorem}}
\renewcommand{\thepage}{B\arabic{page}}
\renewcommand{\thesection}{B\arabic{section}}
\renewcommand{\theequation}{B\arabic{equation}}
\renewcommand{\thefigure}{\arabic{figure}}
\renewcommand{\bibnumfmt}[1]{[B#1]}
\renewcommand{\citenumfont}[1]{B#1}

\begin{frontmatter}
\pdfbookmark[0]{Supplementary Materials B: Euclidean PCA as an optimization in the flag space}{SupB}

\title{Supplementary Materials B: Euclidean PCA as an optimization in the flag space}
\runtitle{Euclidean PCA as an optimization in the flag space}



\author{\fnms{Xavier} \snm{Pennec}\corref{}\ead[label=e1B]{xavier.pennec@inria.fr}}
\address{Asclepios team, Inria Sophia-Antipolis M\'editerran\'ee \\ 2004 Route des Lucioles, BP93
\\ F-06902 Sophia-Antipolis Cedex, France\\ \printead{e1B}}
\affiliation{Universit\'e C\^ote d'Azur and Inria, France}

\runauthor{X. Pennec}


\begin{abstract}
This supplementary material details in length the proof that the flag of linear subspaces found by PCA optimizes the Accumulated Unexplained Variances (AUV) criterion in a Euclidean space.
\end{abstract}

\end{frontmatter}

\section{A QR decomposition of the reference matrix}

 Let $X=[x_0, \ldots x_k]$ be a matrix of $k+1$ independent reference points in $\R^n$. Following the notations of the main paper, we write the reference matrix 
\[
Z(x) = [x-x_0,  \ldots x-x_k] = x\mathds{1}_{k+1} \trp - X.
\]
 The affine span $\Aff(X)$ is the locus of points $x$ satisfying $Z(x)\lambda = 0$ 
i.e. $x = X \lambda / (\mathds{1}_{k+1}\trp \lambda)$. Here, working with the barycentric weights is not so convenient, and in view of the principal component analysis, we prefer to work with a variant of the QR decomposition using the Gram-Schmidt orthogonalization process.

Choosing $x_0$ as the pivot point, we iteratively decompose $X - x_0 \one_{k+1}\trp$ to find an orthonormal basis of the affine span of $X$. For convenience, we define the zeroth vectors $v_0= q_0 =0$. The first axis is defined by $v_1 = x_1-x_0$, or by the unit vector $q_1 = v_1 / \| v_1\|$. Next, we  project the second direction $x_2-x_0$ onto $\Aff(x_0, x_1) = Aff(x_0, x_0 + e_1)$:  the orthogonal component $v_2 = (\Id - e_1 e_1\trp) (x_2 -x_0)$ is described by the unit vector $q_2 = v_2 / \| v_2\|$. The general iteration is then (for $i\geq 1$):
\[
v_i = (\Id - \sum_{j=0}^{i-1} e_j e_j\trp) (x_i - x_0),  \qquad \text{and} \qquad q_i = v_i / \| v_i\|.
\]
Thus, we obtain the decomposition:
\[
\begin{split}
X & = x_0 \one_{k+1}\trp + Q T \\
Q & = [q_0,  q_1,  \ldots q_k] \\
T & = \left[ 
\begin{array}{ccccc}
q_0\trp (x_0 -x_0) & q_0\trp (x_1 -x_0) & q_0\trp (x_2 -x_0) & \ldots & q_0\trp (x_k -x_0) \\
0 & q_1\trp (x_1 -x_0) & q_1\trp (x_2 -x_0) & \ldots & q_1\trp (x_k -x_0) \\
0 &                 0  & q_2\trp (x_2 -x_0) & \ldots & q_2\trp (x_k -x_0) \\
0 &                 0  &             \ldots & \ldots &             \ldots \\
0 &                 0  &             \ldots & \ldots & q_{k}\trp (x_k -x_0) 
\end{array}
\right]
\end{split}
\]
With this affine variant of the QR decomposition, the $(k+1)\times (k+1)$ matrix $T$ is triangular superior with vanishing first row and first column (since $q_0=0$). The  $n\times (k+1)$ matrix $Q$ also has a first null vector before the usual $k$ orthonormal vectors in its $k+1$ columns. The decomposition into matrices of this form is unique when we assume that all the points $x_0, \ldots x_k$ are linearly independent. This means that we can parametrize the matrix $X$ by the orthogonal  (aside the first vanishing column) matrix $Q$ and the triangular (with first row and column zero matrix) $T$.

In view of PCA, it is important to notice that the decomposition is stable under the addition/removal of reference points. Let $X_i=[x_0, \ldots x_{i}]$ be the matrix of the first $i+1$ reference points (we assume $i<k$ to simplify here) and $X_i = x_0 \one_{i+1}\trp + Q_i T_i$ its QR factorization.  Then, the matrix $Q_i$ is made of the first $i+1$ columns of $Q$  and the matrix $T_i$ is the upper $(i+1) \times (i+1)$ bloc of the upper triangular matrices $T$.

\section{Optimizing the $k$-dimensional subspace}

With our decomposition, we can now write any point of $x \in \Aff(X)$ as the base-point $x_0$ plus any linear combination of the vectors $q_i$: $ x = x_0 + Q \alpha$ with $\alpha \in \R^{k+1}$. 
The projection of a point $y$ on $\Aff(X)$ is thus parametrized by the $k+1$ dimensional vector $\alpha$ that minimizes the (squared) distance $d(x,y)^2 = \| x_0 + Q \alpha -y \|^2$. 
Notice that we have $Q\trp Q = \Id_{k+1} -e_1 e_1\trp$ (here $e_1$ is the first basis vector of the embedding space $\R^{K+1}$) so that $Q^{\dag} = Q\trp$.  
The null gradient of this criterion implies that $\alpha$ is solving 
$Q\trp Q \alpha = Q\trp (y-x_0)$, i.e. $\alpha = Q^{\dag} (y-x_0) = Q\trp(y-x_0) $.
Thus, the projection of $y$ on $\Aff(X)$ is 
\[
Proj(y, \Aff(X)) = x_0 + Q Q\trp (y-x_0),
\]
 and the residue is
\[
\begin{split}
r^2(y) & = \| (\Id_{n} - Q Q\trp) (y-x_0)\|^2 = \tr\left( (\Id_{n} - Q Q\trp) (y-x_0)(y-x_0)\trp \right).
\end{split}
\]

Accounting now for the $N$ data points ${ Y} = \{ y_i \}_{i=1}^N$, and denoting as usual $\bar y = \frac{1}{N} \sum_{i=1}^N   y_i$ and $\Sigma = \frac{1}{N} \sum_{i=1}^N  ( y_i -\bar y) ( y_i -\bar y)\trp$, 
the unexplained variance is:
\[
\sigma_{out}^2(X) 
= \tr\left(  (\Id_{n} - Q Q\trp) ( \Sigma - (\bar y-x_0)(\bar y-x_0)\trp ) \right)
.
\]
In this formula, we see that the value of the upper triangular matrix $T$ does not appear and can thus be chosen freely. The point $x_0$ that minimizes the unexplained variance is evidently  $x_0 = \bar y$. To determine the matrix $Q$, we diagonalize the empirical covariance matrix to obtain the spectral decomposition $\Sigma = \sum_{j=1}^n \sigma_j^2 u_j u_j\trp$ where by convention, the eigenvalues are sorted in decreasing order. 
The remaining unexplained variance  $\sigma_{out}^2(X) 
= \tr\left(  (\Id_{n} - (U\trp Q) (U\trp Q)\trp) \mbox{Diag}(\sigma_i^2) \right)$ reaches its minimal value $ \sum_{i=k+1}^n \sigma_i^2$ for $[q_1, \ldots q_k] = [u_1, \ldots u_k] R$ where $R$ is any $k\times k$ orthogonal matrix. Here, we see that the solution is unique in terms of subspaces (we have $\text{Span}(q_1, \ldots q_k) = \text{Span}(u_1, \ldots u_k)$ whatever orthogonal matrix $R$ we choose) but not in terms of the matrix $Q$. 
In particular, the matrix $X = [\bar y, \bar y + u_1,\ldots \bar y + u_k ]$ is one of the matrices describing the optimal subspace but the order of the vectors is not prescribed.

\section{The AUV criterion}

In PCA, one often plots the unexplained variance as a function of the number of modes used to approximate the data. This curve should decreases as fast as possible from the variance of the data (for 0 modes) to 0 (for $n$ modes). A standard way to quantify the decrease consists in summing the values at all steps. We show in this section that the optimal flag of subspaces (up to dimension $k$) that optimize  this Accumulated Unexplained Variances (AUV) criterion is precisely the result of the PCA analysis.  

As previously, we consider $k+1$ points $x_i$ but they are now ordered.  We denote by $X_i=[x_0, \ldots x_i]$ the matrix of the first $i+1$ columns of $X=[x_0, \ldots x_k]$. The flag generated by $X$ is thus 
\[
Aff(X_0)=\{x_0\} \subset \ldots \subset Aff(X_i) \subset \ldots \subset Aff(X) \subset \R^n.
\]
The QR decomposition of $X$ gives $k$ orthonormal unit vectors $q_1$ \ldots $q_k$ which can be  complemented by $n-k$  unit vector $q_{k+1}, \ldots q_n$ to constitute an orthonormal basis of $\R^n$. Using this extended basis, we can write:
\[
\sigma_{out}^2(X) 
= \tr\left(  W ( \Sigma - (\bar y-x_0)(\bar y-x_0)\trp ) \right)
\]
with $W= (\Id_{n} - Q Q\trp) = \sum_{j=k+1}^n q_j q_j\trp.$
Since the decomposition is stable under the removal of reference points, the QR factorization of $X_i$ is $X_i = x_0 \one_{i+1}\trp + Q_i T_i$ with $Q_i=[q_0, \ldots q_i]$ and we can write the unexplained variance for the subspace $Aff(X_i)$ as:
\[
\sigma_{out}^2(X_i) 
= \tr\left(  W_i ( \Sigma - (\bar y-x_0)(\bar y-x_0)\trp ) \right)
\]
with $W_i= (\Id_{n} - Q_i Q_i\trp) = \sum_{j=i+1}^n q_j q_j\trp.$
Plugging this value into the criterion 
$AUV(X) = \sum_{i=0}^k \sigma^2_{out}( X_i )$, we get:
\[
AUV(X_k) = \tr\left(  \bar W ( \Sigma - (\bar y-x_0)(\bar y-x_0)\trp ) \right) 
\]
with 
\[
\bar W  = \sum_{i=0}^k W_i
= \sum_{i=0}^k (\Id_{n} - Q_i Q_i\trp) 
=  \sum_{i=0}^k \sum_{j=i+1}^n q_j q_j\trp 
 = \sum_{i=1}^k i q_i q_i\trp + (k+1)  \sum_{i=k+1}^n q_i q_i\trp.
\]

\section{PCA optimizes the AUV criterion}
The minimum over $x_0$ is achieved as before for $x_0= \bar y$ and the AUV for this value it now parametrized only by the matrix $Q$:
\[
AUV(Q) = \tr\left(  U\trp W_k U \mbox{Diag}(\sigma_i^2) \right)
= \sum_{i=1}^k i q_i\trp \Sigma q_i + (k+1)  \sum_{i=k+1}^n  q_i\trp \Sigma q_i.
\]
 Assuming that the first $k+1$ eigenvalues $\sigma_i^2$ ($1\leq i \leq k+1$) of $\Sigma$ are all different (so that they can be sorted in a strict order),  we claim that the optimal unit orthogonal vectors are $q_i = u_i$ for $1\leq i \leq k$ and $[q_{k+1}, \ldots q_n] = [u_{k+1}, \ldots u_n] R$ where $R \in O(n-k)$ is any orthogonal matrix.

In order to simplify the proof, we start by  assuming that all the eigenvalues have multiplicity one, and 
we optimize iteratively over each unit vector $q_i$.  We start by $q_1$: augmenting the Lagrangian with the the constraint $\|q_1\|^2 =1$ using the Lagrange multiplier $\lambda_1$ and differentiating, we obtain:
\[
\nabla_{q_1} (   AUV(Q) + \lambda \|q_1\|^2) = \Sigma q_1 + \lambda_1 q_1 =0. 
\]
This means that $q_1$ is a unit eigenvector of $\Sigma$. Denoting $\pi(1)$ the index of this eigenvector, we have $q_1^* = u_{\pi(1)}$ and the eigenvalue is $-\lambda_1 = \sigma_{\pi(1)}^2$. 
The criterion for this partially optimal value is now
\[
AUV([q_1^*, q_2 \ldots q_n] ) = \sigma_{\pi(1)}^2 + \sum_{i=2}^k i q_i\trp \Sigma q_i + (k+1)  \sum_{i=k+1}^n  q_i\trp \Sigma q_i.
\]
To take into account the orthogonality of the remaining vectors $q_i$ ($i > 1$) with $q_1^*$ in the optimization, we can project all the above quantities along $u_{\pi(1)}$. Optimizing now for $q_2$ under the constraint $\|q_2\|^2=1$, we find that  $q_2$ is a unit eigenvector of $\Sigma - \sigma_{\pi(1)}^2 u_{\pi(1)} u_{\pi(1)}\trp$ associated to a non-zero eigenvalue. Denoting $\pi(2)$ the index of this eigenvector (which is thus different from $\pi(1)$ because it has to be non-zero), we have $q_2^* = u_{\pi(2)}$ and the eigenvalue is $-\lambda_2 = 2 \sigma_{\pi(2)}^2$. 

Iterating the process, we conclude that  $q_i^* = u_{\pi(i)}$ for some permutation $\pi$ of the indices $1, \ldots n$. Moreover, the value of the criterion for that permutation is
\[
AUV([q_1^*, q_2^* \ldots q_n^*] ) =  \sum_{i=q}^k i \sigma_{\pi(i)}^2 + (k+1)  \sum_{i=k+1}^n  \sigma_{\pi(i)}^2.
\]
In order to find the global minimum, we now have to compare the values of this criterion for all the possible permutations.

Assuming that $i<j$, we now show that the permutation of two indices $\pi(i)$ and $\pi(j)$ give a lower (or equal) criterion when $\pi(i) < \pi(j)$. Because eigenvalues are sorted in strictly decreasing order, we have $\sigma_{\pi(i)}^2 > \sigma_{\pi(j)}^2$. Thus,  $(\alpha-1) \sigma_{\pi(i)}^2 > (\alpha-1) \sigma_{\pi(j)}^2$ for any $\alpha \geq 1$ and adding $\sigma_{\pi(i)}^2 + \sigma_{\pi(j)}^2$ on both sides, we get $\alpha \sigma_{\pi(i)}^2 + \sigma_{\pi(j)}^2 > \sigma_{\pi(i)}^2 + \alpha \sigma_{\pi(j)}^2$. 
For the value of $\alpha$, we distinguish there cases:
\begin{itemize}
	\item $i<j\leq k$: we take $\alpha = j/i > 1$. multiplying on both sides by the positive value $i$, we get:
	$i \sigma_{\pi(i)}^2 + j \sigma_{\pi(j)}^2 < i \sigma_{\pi(j)}^2 + j \sigma_{\pi(i)}^2$. The value of the criterion is thus strictly lower if $\pi(i) < \pi(j)$. 
	\item $i \leq k< j$:  we take $\alpha = (k+1)/i > 1$ and we get:
	$i \sigma_{\pi(i)}^2 + (k+1) \sigma_{\pi(j)}^2 < i \sigma_{\pi(j)}^2 + (k+1)  \sigma_{\pi(i)}^2$. Once again, the value of the criterion is thus strictly lower if $\pi(i) < \pi(j)$. 
	\item $k < i<j$: here permuting the indices does not change the criterion since $\sigma_{\pi(i)}^2$ and $\sigma_{\pi(j)}^2$ are both counted with the weight $(k+1)$.  
\end{itemize}
In all cases, the criterion is minimized  by swapping indices in the permutation such that $\pi(i) < \pi(j)$ for $i<j$ and $i<k$. The global minimum is thus achieved for the identity permutation $\pi(i) = i$ for the indices $1 \leq i \leq k$. For the higher indices, any linear combination  of the last $n-k$ eigenvectors of $\Sigma$ gives the same value of the criterion. Taking into account the orthonormality constraints, such a linear combination writes $[q_{k+1}, \ldots q_n] = [u_{k+1}, \ldots u_n] R$ for some orthonormal $(n-k)\times (n-k)$ matrix $R$. 

When some eigenvalues of $\Sigma$ have a multiplicity larger than one, then the corresponding eigenvectors cannot be uniquely determined since they can be rotated within the eigenspace.  With our assumptions, this can only occur within the last $n-k$ eigenvalues and this does not change anyway the value of the criterion.
We have thus proved the following theorem.

\begin{theorem}[Euclidean PCA as an optimization in the flag space] $ $ \\
Let ${\hat Y} = \{ \hat y_i \}_{i=1}^N$ be a set of $N$ data points in $\R^n$. We denote as usual the mean by $\bar y = \frac{1}{N} \sum_{i=1}^N  \hat y_i$ and the empirical covariance matrix by $\Sigma = \frac{1}{N} \sum_{i=1}^N  (\hat y_i -\bar y) (\hat y_i -\bar y)\trp$. Its spectral decomposition is denoted $\Sigma = \sum_{j=1}^n \sigma_j^2 u_j u_j\trp$ with the eigenvalues sorted in decreasing order. We assume that the first $k+1$ eigenvalues have multiplicity one, so that the order from $\sigma_1$ to $\sigma_{k+1}$ is strict.  

Then the partial flag of affine subspaces $Fl(x_0\prec x_1 \ldots  \prec x_k)$  optimizing the AUV criterion:
\[
AUV(Fl(x_0\prec x_1 \ldots  \prec x_k)) = \sum_{i=0}^k \sigma^2_{out}( Fl_i(x_0\prec x_1 \ldots  \prec x_k ) )
\]
is totally ordered  and can be parameterized by $x_0 = \bar y$, $x_i = x_0 + u_i$ for $1 \leq i \leq k$. The parametrization by points is not unique but the flag of subspaces which is generated is and is equal to the flag generated by the PCA  modes up to mode $k$ included.
\end{theorem}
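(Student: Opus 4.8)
The plan is to reduce the optimization over flags of affine spans to a finite-dimensional optimization over an orthonormal frame, and then to a rearrangement argument on the eigenvalues of the covariance. First I would parametrize the flag. Choosing $x_0$ as a pivot, I would apply Gram--Schmidt to the columns of $X - x_0 \one\trp$ to obtain an affine QR-type factorization $X = x_0 \one\trp + Q T$, where $T$ is upper triangular and the columns $q_1, \ldots, q_k$ of $Q$ form an orthonormal basis of the linear part of $\Aff(X)$ (with a leading zero column $q_0$). The essential feature is the \emph{stability under truncation}: the factorization of the sub-matrix $X_i = [x_0, \ldots, x_i]$ reuses exactly the first $i+1$ columns of $Q$ and the leading block of $T$. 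This is what allows a single frame to encode the whole nested family $\Aff(X_0) \subset \cdots \subset \Aff(X_k)$ simultaneously, and it is the reason Gram--Schmidt is preferable to working directly with barycentric weights.

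Second, I would write the unexplained variance of each subspace of the flag in projector form. Projecting a point onto $\Aff(X_i)$ uses the orthogonal projector $\Id - Q_i Q_i\trp$ with $Q_i = [q_0, \ldots, q_i]$, so that $\sigma_{out}^2(X_i) = \tr\big( W_i (\Sigma - (\bar y - x_0)(\bar y - x_0)\trp) \big)$ with $W_i = \Id - Q_i Q_i\trp$. Summing over $i$ collapses the AUV into a single trace $\tr\big( \bar W (\Sigma - (\bar y - x_0)(\bar y - x_0)\trp) \big)$, where, after completing $q_1, \ldots, q_k$ to an orthonormal basis of $\R^n$, $\bar W = \sum_{i=0}^k W_i = \sum_{i=1}^k i\, q_i q_i\trp + (k+1) \sum_{i=k+1}^n q_i q_i\trp$. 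The point of this computation is that $\bar W$ is a positive weighted sum of the rank-one projectors onto the frame vectors, with strictly increasing weights $1, 2, \ldots, k$ on the flag directions and the constant weight $k+1$ on the complement.

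Third, I would optimize. The offset enters only through the rank-one subtraction, so the minimum over $x_0$ is attained at $x_0 = \bar y$, eliminating that term and leaving $\mathrm{AUV} = \sum_{i=1}^k i\, q_i\trp \Sigma q_i + (k+1) \sum_{i=k+1}^n q_i\trp \Sigma q_i$ to minimize over orthonormal frames. Introducing Lagrange multipliers for the unit-norm constraints and differentiating shows that each critical $q_i$ is a unit eigenvector of $\Sigma$ (for later vectors one first projects out the already-chosen eigenvectors, so $q_i$ is an eigenvector of the restriction of $\Sigma$ to the orthogonal complement, hence still an eigenvector of $\Sigma$). Thus any critical frame is a permutation $q_i = u_{\pi(i)}$ of the eigenbasis, and the criterion becomes $\sum_{i=1}^k i\, \sigma_{\pi(i)}^2 + (k+1) \sum_{i=k+1}^n \sigma_{\pi(i)}^2$.

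The main obstacle is this final combinatorial step: the critical-point equations do not by themselves select which eigenvector goes into which weighted slot, and this is exactly where the strict ordering $\sigma_1 > \cdots > \sigma_{k+1}$ is needed. I would settle it by a pairwise exchange (rearrangement) argument: writing $w_i = \min(i, k+1)$, for indices $i < j$ the sign of the swap difference is governed by $(w_i - w_j)(\sigma_{\pi(i)}^2 - \sigma_{\pi(j)}^2)$, so whenever $w_i < w_j$ the criterion strictly decreases by placing the larger eigenvalue in the smaller-weight slot, i.e. by enforcing $\pi(i) < \pi(j)$. Separating the cases $i < j \le k$, $i \le k < j$, and $k < i < j$ gives a strict comparison in the first two and an equality in the third. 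The minimum is therefore the identity permutation on the first $k$ indices, forcing $q_i = u_i$ for $1 \le i \le k$, while the last $n-k$ directions remain free up to an $O(n-k)$ rotation. This residual freedom, together with the free choice of the triangular matrix $T$ and the arbitrary rotation within any higher-multiplicity eigenspace, accounts for the non-uniqueness of the point parametrization even though the generated flag of subspaces is unique.
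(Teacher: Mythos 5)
Your proposal is correct and follows essentially the same route as the paper's own proof in Supplement B: the affine QR (Gram--Schmidt) factorization with truncation stability, the collapse of the AUV into a single trace against the weighted projector $\bar W = \sum_{i=1}^k i\, q_i q_i\trp + (k+1)\sum_{i=k+1}^n q_i q_i\trp$, the Lagrange-multiplier argument forcing each $q_i$ to be an eigenvector of $\Sigma$, and the pairwise-exchange argument over permutations with the same three cases $i<j\le k$, $i\le k<j$, $k<i<j$. The only cosmetic difference is your unified weight notation $w_i = \min(i,k+1)$, which packages the paper's case analysis but does not change the argument.
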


\end{document}